\newtheorem{mainthm}{Theorem}
\newtheorem{thm}{Theorem}
\newenvironment{@abssec}[1]{%
    \if@twocolumn

      \section*{#1}%
    \else

      \vspace{.05in}\footnotesize
      \parindent .2in
 {\upshape\bfseries #1. }\ignorespaces
    \fi}
\par\vspace{.1in}\fi}
\newenvironment{keywords}{\begin{@abssec}{\keywordsname}}{\end{@abssec}}
\newenvironment{AMS}{\begin{@abssec}{\AMSname}}{\end{@abssec}}
\newcommand\keywordsname{Key words}
\newcommand\AMSname{AMS subject classifications}
\newcommand\AMname{AMS subject classification}
\newcommand\restr[2]{{
\left.\kern-\nulldelimiterspace 
#1 
\vphantom{|} 
\right|_{#2} 
}}
\newtheorem{theorem}{Theorem}[section]
\newtheorem{lemma}[theorem]{Lemma}
\newtheorem{corollary}[theorem]{Corollary}
\newtheorem{proposition}[theorem]{Proposition}
\newtheorem{remark}[theorem]{Remark}
\newcommand{\NN}{\mathbb{N}}
\newcommand{\RR}{\mathbb{R}}
\renewcommand{\SS}{\mathbb{S}}
\newcommand{\tin}{{\text{in }}}
\newcommand{\ton}{{\text{on }}}
\def\XXint#1#2#3{{\setbox0=\hbox{$#1{#2#3}{\int}$}
\vcenter{\hbox{$#2#3$}}\kern-.5\wd0}}
\newcommand{\im}{\mathop{\mathrm{Im}}}   
\newcommand{\link}{\mathop{\circ\kern-.35em -}}
\newcommand{\codim}{\mathop{\mathrm{codim}}}
\renewcommand{\ker}  
{\mathop{\mathrm{Ker}}}
\newcommand{\ol}{\overline}
\newcommand{\pa}{\partial}
\newcommand{\dv}{\mathop{\mathrm{div}}}
\newcommand{\na}{\nabla}
\newcommand{\gr}{\nabla}
\newcommand{\al}{\alpha}
\newcommand{\be}{\beta}
\newcommand{\ga}{\gamma}
\newcommand{\De}{\Delta}
\newcommand{\ve}{\varepsilon}
\newcommand{\la}{\lambda}
\newcommand{\si}{\sigma}
\newcommand{\te}{\theta}
\newcommand{\om}{\omega}
\newcommand{\Om}{\Omega}
\newcommand{\rn}{{\mathbb{R}}^N}
\newcommand{\sg}{\sigma}
\newcommand{\id}{{\rm Id}}
\newcommand\setbld[2]{\left\{ #1 \;\middle |\; #2\right\}}
\newcommand\jump[1]{\left\llbracket #1\right\rrbracket}
\newcommand{\cdottone}{{\boldsymbol{\cdot}}}
\newcommand{\cA}{\mathcal{A}}
\newcommand{\cB}{\mathcal{B}}
\newcommand{\cC}{\mathcal{C}}
\newcommand{\cD}{\mathcal{D}}
\newcommand{\cM}{\mathcal{M}}
\newcommand{\cX}{\mathcal{X}}
\newcommand{\cY}{\mathcal{Y}}
\newcommand{\Orig}{O}
\newcommand{\identmatrix}
{{\mathbf{I}}}
\numberwithin{equation}{section}
\title{\bf Face 2-phase:
\\
how much overdetermination is enough to get symmetry in two-phase problems
}
\author{Lorenzo Cavallina \, Giorgio Poggesi \
}
\date{}
\begin{document}

\maketitle

\begin{abstract}
We provide a full characterization of multi-phase problems under a large class of overdetermined Serrin-type conditions. Our analysis includes both symmetry and asymmetry (including bifurcation) results. A broad range of techniques is needed to obtain a full characterization of all the cases, including applications of results obtained via the moving planes method, approaches via integral identities in the wake of Weinberger, applications of the Crandall--Rabinowitz theorem, and the Cauchy--Kovalevskaya theorem. The multi-phase setting entails intrinsic difficulties that make it difficult to predict whether a given overdetermination will lead to symmetry or asymmetry results; the results of our analysis are significant as they answer such a question providing a full characterization of both symmetry and asymmetry results.
\end{abstract}

\begin{keywords}
two-phase, overdetermined problem, symmetry, asymmetry, integral identities, bifurcation, Cauchy-Kovalevskaya theorem, transmission conditions.
\end{keywords}

\begin{AMS}
35J15, 35N25, 35Q93.
\end{AMS}

\pagestyle{plain}
\thispagestyle{plain}

\section{Introduction and main results}\label{sec:introduction}

We would like to start by giving the following alternative take on the celebrated theorem by J. Serrin \cite{Se1971}. Let $\Om$ be a bounded domain (that is, a bounded connected open set) of $\RR^N$ ($N\ge 2$) whose boundary is made of regular points for the Dirichlet Laplacian (see for instance \cite[Chapter 8]{GT} where the well-known Wiener criterion is also discussed). Consider the solution $u$ to the following boundary value problem.
\begin{equation}\label{eq torsion pb}
  \De u = N\quad \text{in }\Om,\quad 
  u=0\quad \text{on }\pa\Om.
\end{equation}
Let $\om\subset\subset\Om$ be a subdomain\footnote{We say that $A\subset\subset B$ if $\ol A\subset B$.}. We say that $\pa\om$ is an \emph{overdetermined level set} for the function $u$ if both $u$ and $|\gr u|$ are constant functions on $\pa\om$, that is
\begin{equation*}
    u\equiv a, \quad | \na u | \equiv c \quad \text{on }\pa\om,
\end{equation*}
for some real constants $a$ and $c$. Notice that we must have
\begin{equation}\label{quella senza integrazione per parti}
   c >0;
\end{equation}
in fact, assuming that $c=0$, the subharmonicity of $|\na u|^2$ would give that $| \na u| \equiv 0$ in $\om$, which contradicts $\Delta u = N$ in $\om$.

The following proposition can be obtained as a corollary of Serrin's result \cite{Se1971}. 
\begin{proposition}\label{one phase proposition}
Let $\Om\subset\rn$ be a bounded domain whose boundary is made of regular points for the Dirichlet Laplacian. Then, the following are equivalent:
\begin{enumerate}[(i)]
    \item $\Om$ is a ball.
    \item The solution of \eqref{eq torsion pb} admits an overdetermined level set $\pa\om$.
\end{enumerate}
 \end{proposition}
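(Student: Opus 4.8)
\emph{Strategy.} The plan is to prove the two implications separately: $(i)\Rightarrow(ii)$ by an explicit computation, and $(ii)\Rightarrow(i)$ by reducing it, after a few preliminary observations, to Serrin's theorem \cite{Se1971} applied to the \emph{inner} domain $\om$. For $(i)\Rightarrow(ii)$: if $\Om=B_R(x_0)$, then $u(x)=\tfrac12\big(|x-x_0|^2-R^2\big)$ is the solution of \eqref{eq torsion pb}, and for every $r\in(0,R)$ the sphere $\pa B_r(x_0)$ is an overdetermined level set, since on it $u\equiv\tfrac12(r^2-R^2)$ and $|\gr u|\equiv r$; one takes $\om=B_r(x_0)\subset\subset\Om$.

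\emph{Proof of $(ii)\Rightarrow(i)$: reduction to Serrin on $\om$.} Write $u\equiv a$ and $|\gr u|\equiv c$ on $\pa\om$. First I would record the elementary facts: interior elliptic regularity gives $u\in C^\infty(\Om)$; since $\De u=N>0$ the function $u$ is subharmonic, so $u=0$ on the (regular) boundary $\pa\Om$ together with the strong maximum principle forces $u<0$ in $\Om$, whence $a<0$; applying the same principle on $\om$ to $v:=u-a$, which solves $\De v=N$ in $\om$ with $v=0$ on $\pa\om$, gives $v<0$ in $\om$, i.e.\ $u<a$ in $\om$. Next I would rule out the degenerate case $c=0$: if $\gr u\equiv 0$ on $\pa\om$, the divergence theorem on $\om$ gives $N|\om|=\int_\om\De v=\int_{\pa\om}\pa_\nu v=0$, contradicting $|\om|>0$; hence $c\neq0$. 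Then $\gr u\neq0$ everywhere on $\pa\om$, and since $u<a$ on the $\om$-side of $\pa\om$ the level set $\{u=a\}$ coincides with $\pa\om$ near each of its points; thus $\pa\om$ is a (real-analytic) closed embedded hypersurface having $\om$ as its interior, with $v=0$ and $|\gr v|\equiv c$ on $\pa\om$. Serrin's theorem now yields $\om=B_r(x_0)$ for some ball, and solving $\De v=N$ in $B_r(x_0)$ with $v=0$ on $\pa B_r(x_0)$ forces $u(x)=\tfrac12\big(|x-x_0|^2-r^2\big)+a$ throughout $\om$.

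\emph{Propagation and identification of $\Om$.} The function $w:=u-\tfrac12\big(|x-x_0|^2-r^2\big)-a$ is harmonic in $\Om$ and vanishes on the nonempty open set $\om$; by real-analyticity of harmonic functions and connectedness of $\Om$ it vanishes identically, so $u(x)=\tfrac12\big(|x-x_0|^2-r^2\big)+a$ in all of $\Om$. Since $u<0$ in $\Om$ while $u=0$ on $\pa\Om$, we get $\Om\subseteq\{u<0\}=B_\rho(x_0)$ and $\pa\Om\subseteq\{u=0\}=\pa B_\rho(x_0)$ with $\rho:=\sqrt{r^2-2a}>0$; as $\pa\Om$ does not meet $B_\rho(x_0)$, the ball $B_\rho(x_0)$ is the disjoint union of the open sets $\Om$ and $B_\rho(x_0)\setminus\Om$, and its connectedness forces $\Om=B_\rho(x_0)$. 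I expect the only genuinely delicate point to be the regularity of $\pa\om$: everything hinges on the dichotomy $c=0$ versus $c\neq0$, and once $c\neq0$ the inner surface is automatically smooth, so that Serrin's rigidity together with unique continuation finishes the argument.
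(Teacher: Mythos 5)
Your proof is correct and follows essentially the same route as the paper: apply Serrin's theorem to the inner domain $\om$, propagate radiality by real analyticity, and then identify $\Om$ with the zero level-set ball. You supply a few more technical prerequisites than the paper does (ruling out $c=0$ via the divergence theorem and noting that $\gr u\neq 0$ forces $\pa\om$ to be a smooth hypersurface, hence admissible for Serrin), which is careful and welcome, but the underlying argument is the same.
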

 \begin{proof}
  If $\Om$ is a ball, then $u$ is radial, and thus every level set is overdetermined. Let us now consider the reverse implication. Notice that the overdetermined level set $\pa\om$ is automatically smooth by standard interior regularity for elliptic equations and the fact that $|\gr u|>0$ on $\pa\om$ by \eqref{quella senza integrazione per parti}. The symmetry result of \cite{Se1971} applied to the domain $\om$ yields that $\om$ must be a ball and $u$ must be radial inside $\om$. Now, since $u$ is real analytic inside the whole $\Om$, then $u$ must be radial in $\Om$ as well. Finally, since $\pa\Om$ is made of regular points, the solution $u$ is continuous up to the boundary by \cite[Theorem 8.30]{GT}. Thus, $\pa \Om$ coincides with the zero level set of $u$, and hence $\Om$ is a ball, as claimed.
 \end{proof}

This paper aims to study how the result of Proposition~\ref{one phase proposition} generalizes to the following two-phase setting. That is, let $(D,\Om)$ be a pair of bounded domains of $\RR^N$ ($N \ge 2$) with $D \subset\subset \Om$ such that $\Om\setminus \ol D$ is connected, and consider the boundary value problem  
\begin{equation}\label{eq:Dirichlet problem}
    \begin{cases}
        \dv(\sg \gr u)=N\quad \text{in }\Om,\\
        u=0\quad \text{on }\pa\Om,
    \end{cases}
\end{equation}
where $\sg$ is the piece-wise constant function defined by 
\begin{equation*}
    \sg:=\sg_c \cX_D+\cX_{\Om\setminus D}
\end{equation*}
for some given positive constant $\sg_c>0$.

We say that a function $u\in H_0^1(\Om)$ is a solution of \eqref{eq:Dirichlet problem} if it satisfies
\begin{equation}\label{eq: weak form}
    -\int_\Om \langle \sg \gr u, \gr \varphi\rangle dx=N\int_\Om \varphi \, dx \quad \text{for all }\varphi\in H_0^1(\Om). 
\end{equation}
We recall that (see \cite{AthaStra, LiVogelius, XB2013, Zhuge}), if $\pa D$ is sufficiently smooth (say, of class $C^{1,\al}$), 
then the solution $u$ satisfies the following transmission problem:
\begin{equation}\label{eq: transmission problem}
  \begin{cases}
      \sg_c \De u = N \quad \text{in } D,\\
      \De u = N \quad \text{in } \Om\setminus \ol D,\\
      \jump{u}=0\quad \text{on }\pa D,\\
      \jump{\sg u_\nu}=0\quad \text{on }\pa D,\\
      u=0\quad \text{on }\pa\Om.
  \end{cases}  
\end{equation}
Here, the quantity $\jump{\cdottone}$, called the \emph{jump} through the interface $\pa D$ is defined as follows: for any function $f\in H^1(\Om)$, we set 
\begin{equation*}
    \jump{f}:=\restr{f}{\pa^+ D}- \restr{f}{\pa^- D},
\end{equation*}
where $\restr{f}{\pa^+ D}$ and $\restr{f}{\pa^- D}$ denote the traces of $f$ on $\pa D$ taken from $\Om\setminus \ol D$ and $D$ respectively.
Moreover, we note that the normal derivative $u_\nu$ in the above (on both sides of $\pa D$) is to be considered with respect to the outer unit normal $\nu$ to $\pa D$.
The jump conditions on $\pa D$ in \eqref{eq: transmission problem} are usually referred to as \emph{transmission conditions}.

For a pair of nonnegative integers\footnote{Here and throughout the paper, $\NN$ will denote the set of positive integers.} $a,b\in \NN\cup\{0\}$, we say that the solution to \eqref{eq:Dirichlet problem} satisfies an overdetermination of type $(a,b)$ if there exist domains $\{\om_i\}_{i=1}^{a+b}$ satisfying
\begin{equation}\label{om_i}
    \om_1\subset\subset  \dots \subset\subset \om_a\subset\subset D \subset \subset \om_{a+1} \subset\subset\dots \subset\subset \om_{a+b} \subset\subset \Om, 
\end{equation}
such that, for each $i=1,\dots, a+b$, the boundary $\pa\om_i$ is an overdetermined level set for the solution to \eqref{eq:Dirichlet problem} (see Figure \ref{fig:enter-label}).
That is, we have
\begin{equation}\label{a_i c_i}
    u\equiv a_i, \quad |\na u| \equiv c_i \quad \text{on }\pa\om_i, \quad i=1,\dots, a+b,
\end{equation}
for some real constants $a_i$ and $c_i$. Notice that we must have
\begin{equation}\label{quell'altra senza integrazione per parti}
    c_i >0, \quad \text{ for any } i=1,\dots, a+b,
\end{equation}
which can be proved as follows. Consider a ball $B \subset \om_i$ with outward unit normal $\nu_{\pa B}$ and a point $y_0 \in \pa \om_i$  such that $\pa B \cap \pa D = \varnothing$ and $y_0 \in \pa B \cap \pa \om_i$.\footnote{For instance, take any $x_0\in \om$ such that $\mathrm{dist}(x_0,\pa\om) < \mathrm{dist}(x_0,\pa D)$ (this condition is necessary if $D \subset \om_i$, whereas it is trivially satisfied if $\om_i \subset D$) and consider $B:=B_{\mathrm{dist}(x_0,\pa\om_i)}(x_0)$ and $y_0 \in \pa B \cap \pa \om_i$.}
Noting that $u \equiv u(y_0)= a_i$ on $\pa\om_i$, the maximum principle (Lemma \ref{weak maximum pple}) gives that $u\le u(y_0)$ in $\om$. An application of the Hopf lemma in $B$ thus gives that
$$c_i 
= |\na u (y_0)| \ge u_{\nu_{\pa B}} (y_0) > 0,$$
and hence \eqref{quell'altra senza integrazione per parti}.

Moreover, to simplify matters, throughout this paper, we will assume that each $\pa \om_i$ is connected.
\begin{figure}
    \centering
\includegraphics[width=0.5\linewidth]{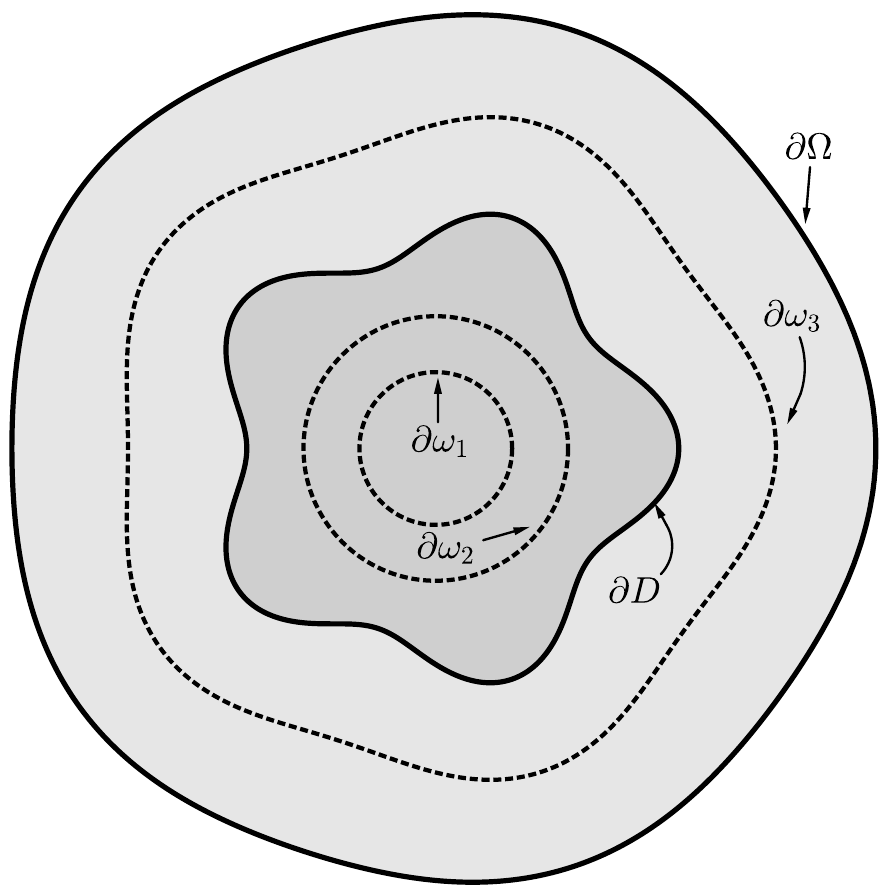}
    \caption{Problem setting when $a=2$, $b=1$. Also compare with Theorem~\ref{(1,1) bifurcation}}
    \label{fig:enter-label}
\end{figure}

In what follows, we present the main results of this paper, which provide a full characterization of the above-mentioned overdetermined problem. 
The relationship between the main results of this paper can be summarized in Figure~\ref{table}.

\begin{mainthm}\label{(0,2) symmetry}
Assume $b\in \NN$, $b\ge 2$, and let $\Om$ be a bounded domain whose boundary is made of regular points for the Dirichlet Laplacian. Let $D\subset\subset\Om$ be a bounded domain whose boundary is of class\footnote{As noticed in Remark \ref{rem:improvement of ThmI using B}, thanks to \cite{bessatsata}, Theorem \ref{(0,2) symmetry} remains valid even without assuming the $C^2$ regularity of $\partial D$.} $C^2$. Moreover, assume that $\Om\setminus\ol D$ is connected.
Then, $(D,\Om)$ satisfies some overdetermination of type $(0,b)$ if and only if $(D,\Om)$ are concentric balls. 
\end{mainthm}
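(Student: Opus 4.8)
The plan is to reduce the two-phase problem with $b\ge 2$ overdetermined level sets outside $D$ to the classical one-phase Serrin setting, and then propagate symmetry across the interface $\pa D$ by unique continuation. First I would observe that on the outermost region the equation is the pure Laplacian $\De u = N$, so $u$ is real-analytic in $\Om\setminus\ol D$. Take the two outermost overdetermined hypersurfaces $\pa\om_{a+b-1}\subset\subset\pa\om_{a+b}$ (with $a=0$, these are $\pa\om_{b-1}$ and $\pa\om_b$); the annular region $A:=\om_{a+b}\setminus\ol{\om_{a+b-1}}$ lies in $\Om\setminus\ol D$ and there $u$ solves $\De u = N$ with $u$ and $|\gr u|$ constant on each of the two boundary components.

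The key step is to apply a Serrin-type symmetry result for this annular overdetermined configuration. Restricting $u$ to $\om_{a+b}$: here $u$ satisfies $\De u = N$ in $\Om\setminus\ol D$ but $\dv(\sg\gr u)=N$ inside $D$, so one cannot directly invoke Serrin on $\om_{a+b}$. Instead I would work on the outer domain $\om_{a+b}$ using the method of moving planes applied to the overdetermined condition on $\pa\om_{a+b}$ together with the fact that $\om_{a+b}$ contains the inclusion $D$; alternatively, and more cleanly, I would apply the symmetry result to the annulus $A$ directly, where the equation is genuinely $\De u = N$, using the version of Serrin's theorem for annular domains with overdetermined data on both boundary components (as in Reichel, Sirakov, or Willms--Gladwell--Siegel). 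This forces $\pa\om_{a+b-1}$ and $\pa\om_{a+b}$ to be concentric spheres and $u$ to be radial in the closed annulus $A$.

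Once $u$ is radial on the open set $A\subset\Om\setminus\ol D$, real-analyticity of $u$ in the connected open set $\Om\setminus\ol D$ (guaranteed since $\De u=N$ there) implies $u$ is radial on all of $\Om\setminus\ol D$: the functions $u(x)$ and $u(Rx)$ for $R$ any rotation about the common center agree on the open subset $A$, hence everywhere on $\Om\setminus\ol D$ by the identity theorem. Since $\pa\Om$ consists of regular points, $u$ is continuous up to $\pa\Om$ (by \cite[Theorem 8.30]{GT}) and $\pa\Om=\{u=0\}$; radial symmetry of $u$ then yields that $\Om$ is a ball centered at the common center. For $D$: the transmission conditions $\jump{u}=0$ and $\jump{\sg u_\nu}=0$ on $\pa D$ mean the radial trace of $u$ from outside and its normal derivative (up to the factor $\sg_c$) are prescribed; combined with radial symmetry of $u$ in the outer phase, $\pa D$ must be a level set of the radial function $u|_{\Om\setminus\ol D}$, which is a sphere concentric with $\pa\Om$. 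Finally one checks $D$ itself is the ball bounded by this sphere (connectedness of $\Om\setminus\ol D$ and of $\pa D$ rule out annular or disconnected configurations), and uniqueness for \eqref{eq:Dirichlet problem} on concentric balls gives the converse.

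The main obstacle is the first nontrivial step: justifying the symmetry of the annular overdetermined configuration cleanly, since the standard Serrin theorem applies to a domain with the equation holding throughout, whereas here I must either restrict to the annulus $A$ (where I need a two-boundary-component version of the symmetry result, and must verify its hypotheses on the constants $a_i, c_i$ are compatible, e.g.\ that $u$ is not identically constant and the overdetermined data are consistent with a solution of $\De u = N$) or run the moving planes method on $\om_{a+b}$ directly, in which case I must handle the inclusion $D$ carefully — the moving planes argument still works because $u$ attains its overdetermined boundary data on $\pa\om_{a+b}$ and the strong maximum principle/Hopf lemma apply in $\om_{a+b}\setminus\ol D$ where $\De u = N$, but the reflected inclusion must be shown to coincide with $D$, which is exactly the symmetry we want and so requires a simultaneous moving-planes argument on the pair $(D,\om_{a+b})$. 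I expect the annulus approach to be the shortest to write down rigorously.
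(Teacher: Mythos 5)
Your overall strategy is essentially the paper's: apply an annular Serrin-type symmetry result (the paper uses Sirakov's Theorem~2) to two of the overdetermined level sets, then propagate radial symmetry to all of $\Om\setminus\ol D$ by analyticity, then conclude $\Om$ is a ball. You correctly identify this backbone, and your caveat about needing to verify the sign hypotheses of the annular theorem is well-placed --- the paper handles this by first proving a two-phase strong maximum principle (Lemma~\ref{strong maximum pple}) and applying it in $\om_1$ and $\om_2$ to get $a_1<a_2$ and $u_\nu\ge 0$ on $\pa\om_1$, which are the inputs Sirakov's theorem requires. Leaving that unverified is a gap you flag yourself.

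The serious gap is in your final step concerning $D$. You write that the transmission conditions \emph{combined with radial symmetry of $u$ in the outer phase} force $\pa D$ to be a level set of the radial function $u|_{\Om\setminus\ol D}$, hence a sphere. This does not follow. Knowing that $u$ restricted to $\Om\setminus\ol D$ coincides with a radial function tells you nothing direct about the shape of $\pa D$: a radial function can perfectly well be restricted to the exterior of a non-spherical inclusion, and the transmission conditions merely transfer Cauchy data for $u$ into $D$, where $u$ solves $\sg_c\De u=N$; none of this, by itself, forces $\restr{u}{\pa D}$ to be constant. The genuine content needed here is a two-phase rigidity theorem: given that the solution of the transmission problem in $(D,\Om)$ with $\Om$ a ball is radial outside $D$, conclude $D$ is a concentric ball. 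This is precisely Sakaguchi's Theorem~5.1 in \cite{Sak bessatsu} (on two-phase conductors with a stationary isothermic surface), which the paper invokes at exactly this point. Without citing such a result or reproducing its argument, your proof is incomplete: the implication ``$u$ radial in $\Om\setminus\ol D$ $\Rightarrow$ $\pa D$ is a sphere'' is the hardest remaining step, not a formality.

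A secondary remark: you work with the two outermost surfaces $\pa\om_{b-1},\pa\om_b$, whereas the paper uses the two innermost $\pa\om_1,\pa\om_2$; either choice works, but for the sign verification via the maximum principle the paper's choice is the one it sets up, and if you use a different pair you should re-derive the sign conditions for that pair.
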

Theorem~\ref{(0,2) symmetry} is obtained by combining a symmetry result in annular domains due to Sirakov \cite{Sirakov} and a symmetry result in the two-phase setting due to Sakaguchi \cite{Sak bessatsu}. We remark that this theorem is sharp in the sense that there exist counterexamples to radial symmetry if $b\le 1$ (see Theorem~\ref{(1,0) asymmetry} below). We mention that this theorem is also sharp with respect to the number of layers, in the sense discussed in Remark \ref{remark bello}.   
\begin{mainthm}\label{(1,1) D=ball symmetry}
Let $a\in\NN$, let $D_0$ be a ball, and let $\Om\supset\supset D_0$ 
 be a bounded domain whose boundary is made of regular points for the Dirichlet Laplacian.  
Then, the pair $(D_0,\Om)$ satisfies some overdetermination of type $(a,1)$ if and only if $(D_0,\Om)$ are concentric balls. 
\end{mainthm}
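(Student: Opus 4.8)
The implication ``concentric balls $\Rightarrow$ overdetermination of type $(a,1)$'' is immediate: when $(D_0,\Om)$ are concentric balls the solution $u$ of \eqref{eq:Dirichlet problem} is radially symmetric about the common centre, so every sphere about that centre on which neither $u$ nor $u_\nu$ vanishes is an overdetermined level set, and it suffices to pick $a$ of them inside $D_0$ and one inside $\Om\setminus\ol{D_0}$. For the converse, suppose $(D_0,\Om)$ admits an overdetermination of type $(a,1)$, with layers $\om_1\subset\subset\dots\subset\subset\om_a\subset\subset D_0\subset\subset\om_{a+1}\subset\subset\Om$ as in \eqref{om_i}; throughout we assume $\sg_c\neq1$. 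Write $D_0=B_R(q)$, and note that $\Om\setminus\ol{D_0}$ is automatically connected, since $D_0$ is a ball compactly contained in $\Om$.

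\emph{Step 1 ($u$ is radial in $D_0$).} The layer $\om_1\subset\subset D_0$ lies in the region where $\sg\equiv\sg_c$, so there $u$ solves $\sg_c\De u=N$ and satisfies $u\equiv a_1$, $u_\nu\equiv c_1$ on $\pa\om_1$ (with $c_1>0$ by the divergence theorem, so $\pa\om_1$ is a regular, hence analytic, level set). Serrin's theorem \cite{Se1971} applied to $u-a_1$ forces $\om_1$ to be a ball and $u$ to be radial in $\om_1$; since $u$ is real-analytic in $D_0$ (it solves there the constant-coefficient equation $\De u=N/\sg_c$), $u$ is radial in all of $D_0$ about the centre $O$ of $\om_1$, and solving the radial ODE gives $u(x)=\tfrac{|x-O|^2}{2\sg_c}+c_0$ on $D_0$. (If $a\ge2$ the remaining inner layers are automatically concentric spheres about $O$ and carry no new information.) \emph{Step 2 (explicit form of $u$ in $\Om\setminus\ol{D_0}$).} The inner formula and the transmission conditions $\jump{u}=0$, $\jump{\sg u_\nu}=0$ on $\pa D_0$ give the Cauchy data of $u$ on the sphere $\pa D_0$ read from outside, namely $u=\tfrac{|x-O|^2}{2\sg_c}+c_0$ and $u_\nu=(x-O)\cdot\nu$; these are restrictions of polynomials to a real-analytic, non-characteristic hypersurface, so the Cauchy--Kovalevskaya theorem yields a unique analytic solution of $\De v=N$ near $\pa D_0$ with those data, and---$u$ being analytic and solving $\De u=N$ on the connected set $\Om\setminus\ol{D_0}$---unique continuation forces $u$ to coincide with it there. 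A direct computation (expanding the boundary data in spherical harmonics on $\pa D_0$) identifies this solution as $u=\tfrac12|x-O|^2+h$, where $h$ is the explicit harmonic function on $\RR^N\setminus\{q\}$ whose singular part is a dipole at $q$ with moment proportional to $(1-\sg_c)(q-O)$; in particular $h$ is constant if and only if $q=O$. Finally, since $\pa\Om$ consists of regular points, $u\in C(\ol\Om)$ and $u=0$ on $\pa\Om$ by \cite[Theorem 8.30]{GT}, so $\pa\Om\subset\{u=0\}$.

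\emph{Step 3 (conclusion).} If $q=O$ then $h$ is constant, so $u$ is radial about $q$ in $\Om\setminus\ol{D_0}$; the transmission condition at $\pa D_0$ kills the singular radial term $|x-q|^{2-N}$, so $u$ is strictly increasing in $|x-q|$ with no singularity, hence $\{u=0\}$ is a single sphere about $q$ and $\Om=B_\rho(q)$, i.e.\ $(D_0,\Om)$ are concentric balls. It remains to exclude $q\neq O$. There $u$ equals the explicit ``quadratic plus nontrivial dipole'' function of Step~2 in $\Om\setminus\ol{D_0}$, with $u=0$ on $\pa\Om$ and $u\equiv a_{a+1}$, $u_\nu\equiv c_{a+1}$ on $\pa\om_{a+1}$. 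The plan is to derive a contradiction from a Rellich--Pohozaev/Weinberger-type integral identity for $\dv(\sg\gr\cdot)$ on $\om_{a+1}$, combined with the subharmonicity of $P:=|\gr u|^2-2u$ in $\om_{a+1}\setminus\ol{D_0}$: the overdetermination on $\pa\om_{a+1}$ would force $P$ to be constant and $D^2u=\identmatrix$ in $\om_{a+1}\setminus\ol{D_0}$, whereas the interface contribution on the sphere $\pa D_0$---which can be computed explicitly from Step~2 and carries the factor $R^2+\tfrac1N|q-O|^2$---is consistent with this only when $q=O$. (Equivalently: the explicit function of Step~2, a quadratic plus a genuine dipole centred at the interior point $q$, admits no closed level surface on which $|\gr u|$ is constant unless its dipole term vanishes.) This last exclusion is the heart of the matter, and it is precisely where the two hypotheses---that $D_0$ is a \emph{ball} and that an \emph{outer} layer is present---are both used: when $b=0$ asymmetric configurations genuinely exist (cf.\ Theorem~\ref{(1,0) asymmetry}), so the inner layers alone cannot suffice, and carrying the interface terms of the two-phase Pohozaev identity through to force $|q-O|=0$ is the main technical work.
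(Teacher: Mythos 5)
Steps~1 and~2 of your proposal are correct and match the paper's setup: Serrin's theorem in $\om_1$ plus analytic continuation give $u(x)=\tfrac{|x-O|^2}{2\sg_c}+c_0$ in $D_0$, the transmission conditions determine the exterior Cauchy data on the sphere $\pa D_0$, and a separation-of-variables computation shows the outer solution is $\tfrac12|x-O|^2$ plus a harmonic function whose degree-one part (regular $r$ piece and dipole $r^{1-N}$ piece, $r=|x-q|$) is proportional to $(1-\sg_c)(q-O)$, hence vanishes iff $q=O$.

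The gap is in Step~3, which is where the entire difficulty lies. Your assertion that ``the overdetermination on $\pa\om_{a+1}$ would force $P$ to be constant and $D^2u=\identmatrix$'' is unproved and does not follow from subharmonicity: $P=\tfrac12|\gr u|^2-u$ is constant on $\pa\om_{a+1}$ but \emph{not} on $\pa D_0$, so the maximum principle in $\om_{a+1}\setminus\ol{D_0}$ gives nothing. In Weinberger's one-phase argument constancy of $P$ comes from a Pohozaev/Rellich-type integral identity; in the two-phase setting that identity acquires interface terms on $\pa D_0$, and computing them and showing they force $q=O$ is precisely what you defer to ``the main technical work''. Without it there is no proof, only a plan.

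The paper closes this gap with a genuinely different device. Lemma~\ref{lem:fundamental identity having u explicit in D} derives, under \eqref{eq:forma di u in D} and for \emph{every} $\xi\in\RR^N$, the identity
\begin{equation*}
\int_{\Om\setminus\ol D}(-u)\Bigl\{|\na^2 u|^2 - \frac{(\De u)^2}{N}\Bigr\}\,dx \;=\; I + II + III,
\end{equation*}
where $I$ is a surface integral over $\pa\Om$, $II$ a $\xi$-free surface integral over $\pa D$, and $III$ a surface integral over $\pa D$ depending linearly on the free vector $\xi$. When $u_\nu\equiv{\rm const.}$ on $\pa\Om$ one has $I\equiv0$ regardless of $\xi$, so $\xi$-independence of the left-hand side and of $II$ forces $\na_\xi III=0$; for $D$ a ball this gradient evaluates to $\bigl(\tfrac1{\sg_c}-1\bigr)z\,|B_1|$ (with $z$ the center of $D$ relative to the centroid of $\om_1$), forcing $z=O$ directly, after which concentricity follows from Proposition~\ref{prop:special case pa D level set} and Remark~\ref{rem:special case pa D level set}. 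Note this route never establishes $P\equiv{\rm const.}$ at all, and the dependence on $q-O$ is \emph{linear}, not quadratic, so the factor $R^2+\tfrac1N|q-O|^2$ you anticipate does not arise. Supplying Lemma~\ref{lem:fundamental identity having u explicit in D} (or the paper's second proof, which computes $II$ and $III$ explicitly and picks $\xi$ to cancel them) is exactly what is missing from your Step~3.
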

The proof of Theorem~\ref{(1,1) D=ball symmetry} requires more work and relies on the use of integral identities in the wake of Weinberger \cite{Weinberger} (see also \cite{Payne-Schaefer, NT2018, MP2, DPV}) while exploiting the new setting in an innovative way. More precisely, in Lemma~\ref{lem:fundamental identity having u explicit in D} we obtain a fundamental integral identity which provides a general necessary and sufficient condition for overdetermination of type $(1,0)$ (see also Theorem~\ref{thm:symmetry for internal only}) for general $D$. As a result, Theorem~\ref{(1,1) D=ball symmetry} is obtained by exploiting the additional assumptions. Two alternative proofs of Theorem~\ref{(1,1) D=ball symmetry} are provided in section \ref{sec:particular cases}.

We stress that Theorem~\ref{(1,1) D=ball symmetry} is sharp, in the sense that if any of the assumptions 
\begin{enumerate}[$(i)$]
\item $D=$ ball,
\item external overdetermination,
\item internal overdetermination
\end{enumerate}
is removed, then counterexamples to symmetry can be obtained. In fact, Theorem~\ref{(1,1) bifurcation} below provides the desired counterexample in the case where $(i)$ is removed, whereas, the counterexample in the case where $(ii)$ is removed is provided by Theorem~\ref{(1,0) asymmetry}.
Finally, \cite{CY2020} provides the desired counterexample in the case where $(iii)$ is removed.

\begin{mainthm}\label{(1,1) bifurcation}
Let $a\in\NN$.
Then, there exist pairs of bounded domains $(D,\Om)$, with analytic boundaries, that are not concentric balls but satisfy some overdetermination of type $(a,1)$. 
\end{mainthm}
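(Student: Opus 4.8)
The plan is to produce such pairs by bifurcating from the trivial family of concentric balls, breaking the rotational symmetry of the inner domain $D$ via the Crandall--Rabinowitz theorem. The first step is to reduce the internal overdetermination to a Cauchy condition on $\pa D$. If the solution $u$ of \eqref{eq: transmission problem} admits an overdetermined level set $\pa\om_{a}\subset\subset D$, then Serrin's theorem \cite{Se1971}, applied on $\om_{a}$ (where $\sg_{c}\De u=N$ and $u,u_{\nu}$ are constant on $\pa\om_{a}$), forces $\om_{a}$ to be a ball and $u$ to be radial about its centre; as $u$ is real-analytic in the connected set $D$, it is radial throughout $D$, and smoothness at the centre---which lies in $D$---gives $u(x)=\tfrac{|x|^{2}}{2\sg_{c}}+c_{2}$ in $D$, once the centre is placed at the origin, for some constant $c_{2}$. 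Conversely, if $u$ has this form in $D$, every sphere centred at the origin with closure in $D$ is an overdetermined level set, so an internal overdetermination of type $(a,0)$ holds once $D$ contains $a$ nested such spheres; and, by the transmission conditions $\jump{u}=\jump{\sg u_{\nu}}=0$ on $\pa D$, the requirement that $u=\tfrac{|x|^{2}}{2\sg_{c}}+c_{2}$ in $D$ is equivalent to prescribing, for $\De u=N$ in $\Om\setminus\ol D$, the Cauchy data
\[
u=\tfrac{|x|^{2}}{2\sg_{c}}+c_{2},\qquad u_{\nu}=x\cdot\nu\qquad\text{on }\pa D .
\]
Hence, given $a$, it suffices to exhibit a non-spherical analytic domain $D$ with $\ol{B_{\rho}}\subset\subset D$ for some fixed ball $B_{\rho}$ centred at the origin, together with $\Om\supset\supset D$, such that the solution of the two-phase problem on $(D,\Om)$ equals $\tfrac{|x|^{2}}{2\sg_{c}}+c_{2}$ in $D$ and has a single overdetermined level set in $\Om\setminus\ol D$; the $a$ internal layers are then automatically available inside $\ol{B_{\rho}}$.

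To set up the bifurcation, write $\pa D$ as a normal graph $\phi\mapsto\{(R_{1}+\phi(\theta))\theta:\theta\in\pa B_{R_{1}}\}$, with $\phi$ ranging in a Banach space $X$ of mean-zero real-analytic functions on $\pa B_{R_{1}}$ (for instance those extending holomorphically to a fixed complex strip, with the supremum norm there). Working in the analytic category is essential here, since the relevant Cauchy problem is ill-posed in spaces of finite regularity but well posed on analytic data, where the exponential decay of the coefficients dominates the geometric growth produced by continuing the solution across the annulus. For $\phi$ small and an auxiliary real parameter $\la$ (for which I would take a radius ratio, or $\sg_{c}$, or the nominal radius $\rho_{a+1}\in(R_{1},R_{2})$ of the external layer), a quantitative Cauchy--Kovalevskaya theorem in scales of Banach spaces of analytic functions yields a solution $u=u_{\phi,\la}$ of $\De u=N$ with the above Cauchy data on $\pa D$, depending analytically on $\phi$ and defined on a fixed neighbourhood of the closed annulus $\{R_{1}\le|x|\le R_{2}\}$---the global extendability following from that of the radial solution (for which $u-|x|^{2}/2$ is constant, hence entire) together with continuous dependence in the analytic Cauchy problem. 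For $\phi$ small, $\{u_{\phi,\la}=0\}$ is then a nearby analytic hypersurface, regular since $|\gr u_{\phi,\la}|$ stays close to $R_{2}$ there, which we take as $\pa\Om$; and the level set $\Gamma_{\phi,\la}:=\{u_{\phi,\la}=c^{\ast}\}$ perturbing $\{|x|=\rho_{a+1}\}$ is a nearby analytic closed hypersurface surrounding $D$ and contained in $\Om$. One checks that the function equal to $\tfrac{|x|^{2}}{2\sg_{c}}+c_{2}$ in $D$ and to $u_{\phi,\la}$ in $\Om\setminus\ol D$ is the $H_{0}^{1}(\Om)$-solution of the two-phase problem on $(D,\Om)$, so that the only missing ingredient for an overdetermination of type $(a,1)$ is that $\Gamma_{\phi,\la}$ be overdetermined. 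Setting
\[
F(\phi,\la):=\Pi\!\left(\restr{|\gr u_{\phi,\la}|^{2}}{\Gamma_{\phi,\la}}\right)\in Y ,
\]
where $\Pi$ is the projection onto mean-zero functions (pulled back to $\pa B_{R_{1}}$) and $Y$ is the corresponding space, the equation $F(\phi,\la)=0$ says exactly that $\Gamma_{\phi,\la}$ is an overdetermined level set, while $(D,\Om)$ fails to be concentric balls as soon as $\pa D$ is not a sphere.

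Since the whole construction is $O(N)$-equivariant and $F(0,\la)=0$ for every admissible $\la$ (the radial branch), the linearization $L_{\la}:=D_{\phi}F(0,\la):X\to Y$ is diagonal with respect to the spherical-harmonic decomposition, $\restr{L_{\la}}{\cH_{k}}=\mu_{k}(\la)\,\id$. The heart of the proof is the computation of $\mu_{k}(\la)$ and the exhibition of a transversal zero: one must solve the linearized Cauchy problem---a harmonic function in the annulus, necessarily of the form $\alpha\, r^{k}Y_{k}+\beta\, r^{-(k+N-2)}Y_{k}$, with the Dirichlet and Neumann data on $\pa B_{R_{1}}$ induced by the shape perturbation $Y_{k}$ of $\pa D$---and combine it with the first variations of the level set $\Gamma$ and of $|\gr u|^{2}$ along it, obtaining an explicit expression for $\mu_{k}(\la)$ in terms of $k,R_{1},R_{2},\rho_{a+1},\sg_{c}$. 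I expect this to be the main obstacle, both for the delicate bookkeeping of the ill-posed Cauchy solve in the analytic scale and for locating a zero of some $\mu_{k}$. Granting it, fix $k\ge 2$ and $\la_{\ast}$ with $\mu_{k}(\la_{\ast})=0$ and $\mu_{k}'(\la_{\ast})\neq0$, and restrict $X$ and $Y$ to functions invariant under a finite subgroup $G<O(N)$ with no nonzero fixed vector and such that, within the $G$-invariant subspace, $\ker L_{\la_{\ast}}$ is one-dimensional---that is, $\cH_{k}$ contributes a line and $\mu_{j}(\la_{\ast})\neq 0$ for the remaining $G$-invariant modes---all of which can be arranged by a standard choice of $G$. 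The Crandall--Rabinowitz theorem then applies to $F$ at $(0,\la_{\ast})$ and yields a branch $s\mapsto(\phi(s),\la(s))$ with $\phi(s)=s\,Y_{k}+o(s)$. For $s\neq0$ small, $\phi(s)\neq0$ is $G$-invariant, so $\pa D_{s}$ is a $G$-invariant hypersurface that is not a sphere (a $G$-invariant sphere would be centred at the origin, forcing $\phi(s)$ to be constant, hence zero); therefore $(D_{s},\Om_{s})$ is a pair with analytic boundaries that is not a pair of concentric balls, and since the corresponding solution equals $\tfrac{|x|^{2}}{2\sg_{c}}+c_{2}$ in $D_{s}$, choosing $\rho_{1}<\dots<\rho_{a}$ inside $\ol{B_{\rho}}\subset\subset D_{s}$ for the $a$ internal layers and $\Gamma_{\phi(s),\la(s)}$ for the external one gives an overdetermination of type $(a,1)$, as required.
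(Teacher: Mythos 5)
Your plan diverges from the paper's in a structural way: you attempt to solve the ill-posed Cauchy problem for $\De u = N$ outward from $\pa D$ in analytic function spaces and then read off both $\pa\Om$ and the external overdetermined level set $\Gamma$ as level sets of that solution, whereas the paper perturbs both $\pa D$ (by $\eta$) and $\pa\Om$ (by $\xi$), solves the \emph{well-posed} Dirichlet problem in the annulus in H\"older spaces $C^{2,\al}$, and records the two overdetermination defects (Neumann mismatch on $\pa D$, deviation of $u_\nu$ from a constant on $\pa\Om$) as the two components of a map $F:X\times\RR\to Y$ whose zeros are found via a tailor-made bifurcation lemma. The paper thereby never confronts an ill-posed continuation step, and stays in the classical Crandall--Rabinowitz framework between fixed Banach spaces of finite regularity.

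The genuine gap in your argument is precisely the step you gloss over as ``continuous dependence in the analytic Cauchy problem'': you assert that for all small analytic perturbations $\phi$ of $\pa D$, the Cauchy problem can be solved on a \emph{fixed} neighbourhood of the closed annulus $\{R_1\le|x|\le R_2\}$ with $\phi\mapsto u_{\phi,\la}$ landing in, and analytic as a map into, a fixed Banach space of analytic functions. Standard Cauchy--Kovalevskaya only gives a local solution near $\pa D$ with a domain of existence that can shrink with the data; a quantitative version (such as Walter's, which the paper does use, but only for a one-parameter explicit family of translates in the proof of Theorem~\ref{(1,0) asymmetry}) guarantees a fixed small strip, not the whole annulus. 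Extending across the entire annulus for an open set of analytic $\phi$, while preserving a fixed analyticity radius and a $C^3$ (or analytic) dependence --- as needed for Theorem~\ref{Crandall--Rabinowitz theorem} --- is a non-trivial quantitative estimate, not a consequence of the extendability of the radial solution; typically one loses analyticity radius and is forced into a scale of Banach spaces, where the standard Crandall--Rabinowitz theorem does not apply without extra work. You also leave the heart of the bifurcation argument --- the explicit computation of the diagonal entries $\mu_k(\la)$, the identification of a zero, and the verification of transversality --- as an expectation rather than a proof; in the paper these are carried out in Lemma~\ref{DF_1, DF_2} and Lemmas~\ref{lem exists R^star}--\ref{lem det M_R never vanishes}, and they occupy a substantial part of the argument. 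Finally, a small point: you ask for a \emph{finite} subgroup $G<O(N)$ under which the invariant part of each $\cY_k$ is one-dimensional; the paper uses the (infinite) stabilizer $O(N-1)\times I$ of an axis, giving the zonal harmonics, and it is not clear a finite group would in general isolate a single line in every $\cY_k$.
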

The proof of Theorem~\ref{(1,1) bifurcation} relies on an application of the Crandall-Rabinowitz bifurcation Theorem~\cite{CR} to suitable shape ``functionals", made possible by the use of shape calculus (see for instance \cite{HP2005}). As a crucial tool, in Lemma~\ref{lem bifurcation}, we provide some new unified machinery to show the existence of bifurcation solutions to general overdeteremined problems in annular domains, which is of independent interest.

\begin{mainthm}\label{(1,0) asymmetry}
Let $a\in\NN$ and let $D_0$ be a ball. 
Then, there exists some domain $\Om\supset\supset D_0$, with analytic boundary, such that $(D_0,\Om)$ are not concentric balls but satisfy some overdetermination of type $(a,0)$. 
\end{mainthm}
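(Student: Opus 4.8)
The plan is to construct an explicit asymmetric example. Write $D_0=B_\rho(z_0)$, and let $z$ be a point close to $z_0$, to be chosen, with $p:=z_0-z\neq 0$. Define the candidate solution inside $D_0$ by $u(x):=\frac{|x-z|^2}{2\sg_c}+B$, which solves $\sg_c\De u=N$ there. Its level sets inside $D_0$ are the spheres $\{|x-z|=r\}$, and each of them is automatically an overdetermined level set, since $u\equiv\frac{r^2}{2\sg_c}+B$ and $|\gr u|\equiv r/\sg_c$ are constant on it. Choosing $0<r_1<\dots<r_a$ small enough that $\ol{B_{r_a}(z)}\subset D_0$ and setting $\om_i:=B_{r_i}(z)$ realizes the chain \eqref{om_i} and the overdetermination \eqref{a_i c_i} of type $(a,0)$, provided $u$ is actually the solution of \eqref{eq:Dirichlet problem}; so the construction is entirely devoted to producing a domain $\Om$ for which this $u$ is genuinely the solution, while $(D_0,\Om)$ is not a pair of concentric balls.

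Next I would extend $u$ across $\pa D_0$: reading off the Cauchy data of the inner profile on $\pa D_0$ and imposing the transmission conditions $\jump{u}=0$, $\jump{\sg u_\nu}=0$ of \eqref{eq: transmission problem} produces a Cauchy problem for $\De u=N$ in the exterior of $D_0$, with data that are real-analytic on the real-analytic, non-characteristic (as $\De$ is elliptic) hypersurface $\pa D_0$; by the Cauchy--Kovalevskaya theorem it has a unique analytic solution in a one-sided neighbourhood of $\pa D_0$. The point is that, $D_0$ being a ball, this solution is explicit: for $N\ge 3$,
\[
u(x)=\frac{|x-z_0|^2}{2}+c_0+\bigl(d_0+d_1\,|x-z_0|^{-N}\bigr)(x-z_0)\cdot p ,
\]
with $d_0=\tfrac{N-1+\sg_c}{N\sg_c}$, $d_1=\tfrac{\rho^N(1-\sg_c)}{N\sg_c}$ and $c_0$ a free constant tied to $B$ (for $N=2$ one replaces $|x-z_0|^{-N}(x-z_0)\cdot p$ by $|x-z_0|^{-2}(x-z_0)\cdot p$, and the implicit radial harmonic $|x-z_0|^{2-N}$ by $\log|x-z_0|$). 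A direct computation checks that this $u$ solves $\De u=N$ and has the prescribed Cauchy data on $\pa D_0$ (the matching incidentally forces the coefficient of the radial harmonic to vanish); by uniqueness it coincides with the Cauchy--Kovalevskaya continuation and, being real-analytic on $\RR^N\setminus\{z_0\}$, extends analytically to all of $\RR^N\setminus\ol{D_0}$.

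Then I would build $\Om$ and check it works. When $p=0$, $u$ is radial and $\{u=0\}$ is the sphere $\{|x-z_0|=R_0\}$, $R_0:=\sqrt{-2c_0}$, on which $|\gr u|=R_0\neq 0$; fixing $c_0<-\rho^2/2$ (so $R_0>\rho$) and then $\|p\|$ small, the implicit function theorem yields that the component of $\{u=0\}$ near that sphere is a closed real-analytic hypersurface, a small perturbation of it. Let $\Om$ be the bounded domain it encloses. One then verifies that $\pa\Om$ is analytic, that $D_0\subset\subset\Om$ with $\Om\setminus\ol{D_0}$ connected (an annular region), and that $u<0$ on $\Om\setminus\pa\Om$ (so there are no spurious zeros and $u\in H^1_0(\Om)$); since $u$ satisfies \eqref{eq: transmission problem} by construction, uniqueness of the weak solution of \eqref{eq:Dirichlet problem} shows $u$ is that solution, and hence $(D_0,\Om)$ satisfies an overdetermination of type $(a,0)$. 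Finally, $\Om$ cannot be a ball centred at $z_0$ — the only way $(D_0,\Om)$ could be concentric balls: if $\Om=B_R(z_0)$, then evaluating the displayed formula on $\pa B_R(z_0)$ and using that $x\mapsto(x-z_0)\cdot p$ is non-constant there (as $p\neq 0$) forces $\tfrac{R^2}{2}+c_0=0$ and $d_0R+d_1R^{1-N}=0$, i.e.\ $R^N=-d_1/d_0=\tfrac{\rho^N(\sg_c-1)}{N-1+\sg_c}$; this is non-positive if $\sg_c\le 1$ (so no positive $R$ exists) and lies in $(0,\rho^N)$ if $\sg_c>1$ (so $R<\rho$, contradicting $\ol{D_0}\subset\Om$), a contradiction in either case; in the degenerate one-phase situation $\sg_c=1$ one reads off instead that $\{u=0\}$ is itself a sphere, but one centred at $z\neq z_0$, still not concentric with $D_0$.

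I expect the main obstacle to be the perturbative construction of $\Om$: one has to guarantee simultaneously, for $\|p\|$ small, that the relevant component of $\{u=0\}$ is a genuine closed analytic hypersurface, that $D_0\subset\subset\Om$ with $\Om\setminus\ol{D_0}$ connected, and that $u$ has no interior zeros. The bookkeeping behind the explicit exterior solution — pinning down $d_0,d_1$ and verifying that the Cauchy data match — is the other delicate, though routine, point; the internal overdetermined level sets, the uniqueness argument identifying $u$ with the solution, and the non-concentricity computation are all then straightforward.
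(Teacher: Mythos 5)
Your proof is correct, and the explicit formula checks out: with $z_0=0$ the function $|x|^{-N}(x\cdot p)$ is the decaying degree-$1$ solid harmonic $|x|^{1-N}\cdot(x\cdot p)/|x|$ and is therefore harmonic, and the two matching equations coming from the transmission conditions, namely $d_0 + d_1\rho^{-N}=1/\sg_c$ from the Dirichlet data and $d_0+(1-N)d_1\rho^{-N}=1$ from the Neumann data, are solved exactly by your $d_0=\frac{N-1+\sg_c}{N\sg_c}$ and $d_1=\frac{\rho^N(1-\sg_c)}{N\sg_c}$ (and the degree-$0$ radial harmonic coefficient does vanish, as you note). Your argument and the paper's share the same overall strategy --- continue the shifted radial profile across $\pa D_0$ via the transmission conditions, then take $\Om$ to be the domain enclosed by a nearby level set of the continuation --- but the implementation differs in a meaningful way. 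The paper treats the continuation abstractly: it invokes Walter's quantitative Cauchy--Kovalevskaya theorem to get a family $u_\ve$ on a fixed annular strip $B_{R_2}\setminus\ol B_{R_1}$ depending continuously on the shift $\ve$, and then proves a separate $C^1$-continuity lemma so that the nondegenerate level-set structure (positive radial derivative, strict separation of boundary values) persists for $\ve$ small. You instead exploit the fact that $D_0$ is a ball to write the exterior continuation in closed form via a degree-$1$ spherical-harmonic ansatz, which automatically extends to all of $\RR^N\setminus\{z_0\}$ and reduces the level-set construction to an implicit-function-theorem perturbation of the sphere $\{|x-z_0|=R_0\}$, avoiding the quantitative Cauchy--Kovalevskaya machinery and the $C^1$-dependence lemma entirely. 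Your route is therefore more elementary and self-contained, and it yields an explicit formula for the counterexample. Your non-concentricity argument is also different in flavor: the paper simply notes that if $(D_0,\Om)$ were concentric balls, uniqueness of the solution would force $u$ to be radial about the center of $D_0$, contradicting its radiality about $z\ne z_0$; you plug $\Om=B_R(z_0)$ into the explicit formula and derive an impossible equation for $R$. Both are valid; the paper's is shorter, yours is more quantitative.
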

Theorem~\ref{(1,0) asymmetry} is shown by constructing an explicit counterexample that exploits a quantitative version of the celebrated Cauchy--Kovalevskaya theorem~\cite{walter}.

\begin{figure}[h]
    \centering    \includegraphics[width=0.8\linewidth]{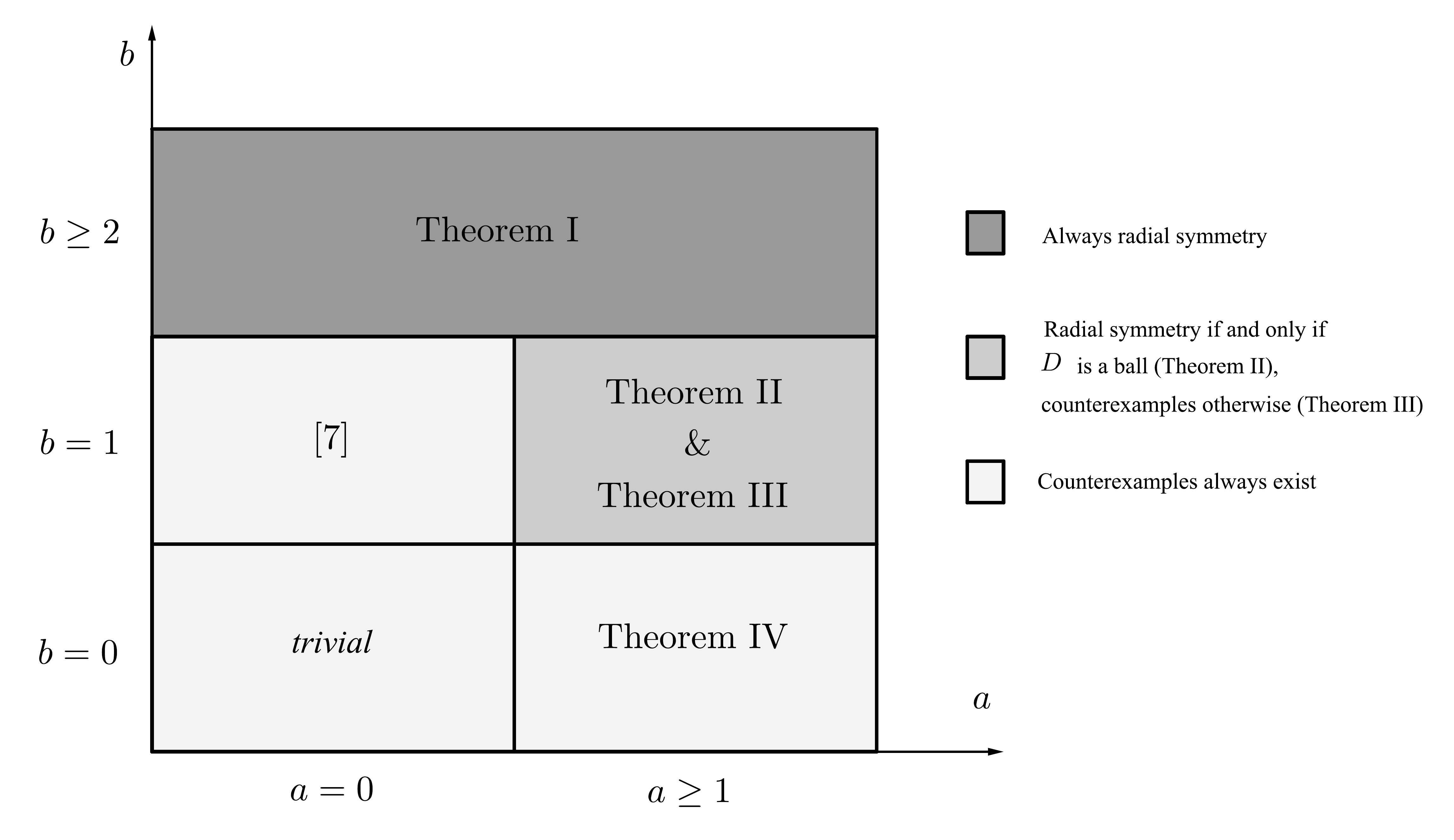}
    \caption{Concerning the existence of non radially symmetric configurations $(D,\Om)$ that satisfy overdetermination of type $(a,b)$ for general $a,b\in \NN\cup\{0\}$.}
    \label{table}
\end{figure}

This paper is organized as follows. 
In section \ref{sec:preliminary} we discuss how the various types of overdetermination are related for different values of $a$ and $b$. As a result, it will be enough to study settings where at most two distinct overdetermined level sets are present. In section \ref{sec:(02)} we provide a short proof of Theorem~\ref{(0,2) symmetry} by combining the known symmetry results of Sirakov \cite{Sirakov}  and Sakaguchi \cite{Sak bessatsu}. In section \ref{sec:particular cases}, we provide (see Theorem~\ref{thm:symmetry for internal only}) a general necessary and sufficient condition for the symmetry under overdetermination of type $(1,0)$ and give two alternative proofs for the symmetry result Theorem~\ref{(1,1) D=ball symmetry}. Section \ref{sec:counter(1,1)} is devoted to the proof of Theorem~\ref{(1,1) bifurcation}, where we construct a non-radial solution by means of the Crandall--Rabinowitz theorem. Finally, in section \ref{sec:(a,0)} we prove Theorem~\ref{(1,0) asymmetry} by constructing a non-radial solution via the Cauchy--Kovalevskaya theorem.

\section{Some preliminary simplifications}\label{sec:preliminary}
\subsection{On the case when either $\pa\om_a=\pa D$ or $\pa\om_{a+b}=\pa \Om$}
For simplicity, in the introduction, we limited our attention to the case where $\om_a\subset\subset D$ and $\om_{a+b}\subset\subset \Om$. 
In this subsection, we consider the neglected cases $\om_a=D$ and $\om_{a+b}=\Om$. In particular, we show that the former is a very strong constraint, equivalent to $(D,\Om)$ being concentric balls, while the case $\om_{a+b}\subset\subset\Om$ of the introduction can be easily reduced to the latter.
\begin{proposition}\label{prop:special case pa D level set}
Let $\pa\om_a=\pa D$ be an overdetermined level set and let $\pa\Om$ be made of regular points for the Dirichlet Laplacian. Then $(D, \Om)$ are concentric balls. 
\end{proposition}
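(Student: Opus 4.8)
The plan is to mimic the proof of Proposition~\ref{one phase proposition}, with the inner phase $D$ playing the role of the subdomain $\om$ there; the only genuinely new ingredient is that radial symmetry, once obtained inside $D$, has to be transported across the interface $\pa D$ by analytic continuation.

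\emph{Step 1: Serrin's theorem inside $D$.} Since $\pa\om_a=\pa D$ is an overdetermined level set, there are constants $a_\ast,c_\ast$ with $u\equiv a_\ast$ and $u_\nu\equiv c_\ast$ on $\pa D$. In $D$ the equation is $\sg_c\De u=N$, so $v:=\sg_c(u-a_\ast)$ solves $\De v=N$ in $D$ with $v=0$ and $v_\nu\equiv\sg_c c_\ast$ constant on $\pa D$, i.e.\ exactly the overdetermined torsion problem to which Serrin's theorem applies. (As in Proposition~\ref{one phase proposition} there is no regularity issue at $\pa D$: the strong maximum principle gives $v<0$ in $D$, so $\nabla u$ does not vanish on the level set $\pa D$, which is therefore analytic, and Hopf's lemma forces $\sg_c c_\ast>0$.) Serrin's symmetry result \cite{Se1971} applied to $D$ then yields that $D$ is a ball and that $u$ is radially symmetric in $D$; write $u(x)=U(|x-x_0|)$ in $D$, where $x_0$ is the center.

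\emph{Step 2: propagating the symmetry to $\Om\setminus\ol D$.} This is the main point. By the transmission conditions in \eqref{eq: transmission problem} the exterior traces on $\pa D$ satisfy $\restr{u}{\pa^+D}=a_\ast$ and, since $\restr{(\sg u_\nu)}{\pa^+D}=\restr{(\sg u_\nu)}{\pa^-D}$, also $\restr{u_\nu}{\pa^+D}=\sg_c c_\ast$ — both constant. Hence, on the exterior side of the analytic hypersurface $\pa D$, the function $u$ solves a Cauchy problem for $\De u=N$ with constant (in particular analytic) Cauchy data. Since the equation, the surface $\pa D$, and this data are all invariant under rotations about $x_0$, the Cauchy--Kovalevskaya theorem \cite{walter} produces a unique local analytic solution, which is necessarily radial about $x_0$; by Holmgren-type uniqueness, $u$ coincides with it in a one-sided neighborhood of $\pa D$ inside $\Om\setminus\ol D$. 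As $\Om\setminus\ol D$ is connected and $u$ is real analytic there, the identity theorem upgrades this to full radial symmetry of $u$ about $x_0$ on $\Om\setminus\ol D$. Therefore $u(x)=U(|x-x_0|)$ on all of $\Om$, with $U$ the analytic continuation of the profile from Step~1, and $U\not\equiv0$ since $\De u=N\neq0$.

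\emph{Step 3 and the main obstacle.} From here one closes exactly as in Proposition~\ref{one phase proposition}: since $\pa\Om$ consists of regular points, $u$ is continuous up to $\pa\Om$ and vanishes there by \cite[Theorem~8.30]{GT}, and the maximum principle for $\dv(\sg\nabla u)=N>0$ gives $u<0$ in $\Om$, so the zero set of $u$ in $\ol\Om$ is exactly $\pa\Om$. By Step~2 that set is contained in a union of spheres centered at $x_0$; following any ray from $x_0\in D\subset\Om$ until it first meets $\{u=0\}$ shows that $\Om=B_\rho(x_0)$ with $\rho=\min\{r>0:U(r)=0\}$, a ball concentric with $D$. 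The only delicate step is Step~2: one must ensure that the exterior solution is regular enough up to $\pa D$ for the Cauchy--Kovalevskaya/Holmgren argument to apply, which follows from analytic elliptic regularity up to the analytic boundary $\pa D$ (with the constant Dirichlet datum $a_\ast$) combined with the transmission identity for the conormal trace; everything else is a routine repetition of the one-phase argument.
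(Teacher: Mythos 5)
Your proof is correct but takes a genuinely different route from the paper's. The paper defines the auxiliary function
\begin{equation*}
    v:=\begin{cases}
        \sg_c\big( u-\restr{u}{\pa D}\big)+\restr{u}{\pa D} & \text{in } D,\\
        u & \text{in } \Om\setminus D,
    \end{cases}
\end{equation*}
and observes that the transmission conditions $\jump{u}=\jump{\sg u_\nu}=0$, together with the fact that $u$ is constant on $\pa D$ (so $\gr_\tau u=0$ there), make $v$ a $C^1$ function across $\pa D$ solving $\De v=N$ in all of $\Om$ with $v=0$ on $\pa\Om$ — a one-phase torsion function — which still has $\pa D$ as an overdetermined level set. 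Proposition~\ref{one phase proposition} then gives the conclusion at once, with no Cauchy--Kovalevskaya, Holmgren, or boundary analytic regularity needed. Your proof instead re-runs the one-phase argument by hand: Serrin inside $D$, then explicit propagation of radiality across the interface via the transmission conditions plus Cauchy--Kovalevskaya/Holmgren uniqueness and analytic continuation, then the same boundary argument as in Proposition~\ref{one phase proposition}. Your Step~2 is exactly the work that the paper's auxiliary function is designed to sidestep: $v$ is analytic across $\pa D$ because it solves a single elliptic equation with analytic right-hand side there, so the analytic continuation already performed inside Proposition~\ref{one phase proposition} suffices, whereas $u$ itself is not analytic across $\pa D$ and forces you into the Cauchy problem machinery. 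Both approaches work; the paper's is shorter and carries a lighter regularity burden. One minor slip in your parenthetical: you argue that $\pa D$ is analytic \emph{before} applying Serrin, by the non-vanishing of $\nabla u$ on a level set, but $u$ is not analytic in any full neighborhood of $\pa D$ (the coefficient jumps), so this reasoning is circular; fortunately it is also unnecessary, since Serrin's theorem requires only the $C^2$ regularity of $\pa D$ assumed in the setup, and the analyticity of $\pa D$ (as a sphere) is a \emph{conclusion}, which you then rightly use in Step~2.
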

\begin{proof}
Let $u$ be the solution to \eqref{eq:Dirichlet problem} in $(D,\Om)$ and consider the following auxiliary function:
\begin{equation*}
    v:=\begin{cases}
        \sg_c\big( u-\restr{u}{\pa D}\big)+\restr{u}{\pa D} \quad \tin D,\\
        \quad u\quad \tin \Om\setminus D.
    \end{cases}
\end{equation*}
The function $v$ then solves \eqref{eq torsion pb} and has $\pa\om_a=\pa D$ as an overdetermined level set. Thus, by Proposition~\ref{one phase proposition}, $v$ is radial, and $\om_a=D$ and $\Omega$ are concentric balls.  
\end{proof}
\begin{remark}\label{rem:special case pa D level set}
Actually, a more general result holds. Indeed, by employing the same auxiliary function $v$ and Serrin's result, we see that spherical symmetry follows under the broader assumption that $\pa D$ is (contained in) a (non necessarily overdetermined) level set and that there exists some overdetermined level set either completely contained inside $\ol\Om\setminus D$ or completely contained inside $\ol D$. 
\end{remark}

For $a\in \NN\cup\{0\}$ and $b\in \NN$, we say that $(D,\Om)$ satisfies an overdetermination of type $(a,b)^\star$ if it satisfies an overdetermination of type $(a,b-1)$ and $\pa\Om=\pa\om_{a+b}$ is a smooth\footnote{To simplify matters we shall assume that $\pa\om_{a+b}$ is of class $C^{2,\al}$, whence, recalling \eqref{quell'altra senza integrazione per parti}, \cite{Kinderlehrer Nirenberg} implies that $\pa\om_{a+b}$ is an analytic surface. We mention that the $C^{2,\al}$ assumption may be dropped provided that the boundary conditions on $\pa\om_{a+b}$ are intended in a suitable weak sense (see \cite{Vo}). On a related note, we stress that, whenever $\om_i\subset\subset\Om$, in light of the interior regularity of the solution $u$ to \eqref{eq:Dirichlet problem} and \eqref{quell'altra senza integrazione per parti}, $\pa\om_i$ is an analytic surface.} overdetermined level set. 
In the sense of the following two lemmas, the study of overdetermination of type $(a,b)$ can be reduced to that of overdetermination of type $(a,b)^\star$. 
\begin{lemma}\label{star implies nonstar for symmetry}
Let $a\in \NN\cup\{0\}$ and $b\in \NN$. Then, $(i)\implies(ii)$.
\begin{enumerate}[(i)]
    \item If $(D,\Om)$ satisfy an overdetermination of type $(a,b)^\star$, then they are concentric balls. 
    \item If $(D,\Om)$ satisfy an overdetermination of type $(a,b)$ and $\pa\Om$ is made of regular points for the Dirichlet Laplacian, then $(D,\Om)$ are concentric balls. 
\end{enumerate}
\end{lemma}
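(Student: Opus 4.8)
The implication $(i)\implies(ii)$ should follow by showing that, under the hypotheses of $(ii)$, the pair $(D,\Om)$ actually satisfies an overdetermination of type $(a,b)^\star$, after which $(i)$ applies verbatim. The point is that in $(ii)$ we are given $a+b$ overdetermined level sets $\pa\om_i$ with $\om_{a+b}\subset\subset\Om$, and we want to promote the outer boundary $\pa\Om$ itself to an overdetermined level set. The natural idea is to discard the outermost layer $\om_{a+b}$ and instead use $\pa\Om$ in its place: concretely, we claim that $(D,\Om)$ satisfies an overdetermination of type $(a,b-1)$ (using $\om_1,\dots,\om_{a+b-1}$) together with $\pa\Om$ being a smooth overdetermined level set, which is precisely the definition of type $(a,b)^\star$.

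\textbf{Key steps.} First I would recall that, since $\pa\Om$ is made of regular points for the Dirichlet Laplacian, the solution $u$ of \eqref{eq:Dirichlet problem} is continuous up to $\pa\Om$ and vanishes there (by \cite[Theorem 8.30]{GT}, exactly as in the proof of Proposition~\ref{one phase proposition}), so $u\equiv 0$ on $\pa\Om$; the Dirichlet value is thus automatically constant, and the only thing that could fail for $\pa\Om$ to be an overdetermined level set is constancy of $|\gr u|$ there, together with the required regularity of $\pa\Om$. The second step is the analytic continuation argument: between the outermost given level set $\pa\om_{a+b}$ and $\pa\Om$ the function $u$ is real analytic (it solves $\De u = N$ in the connected open region $\Om\setminus\ol D\supset\Om\setminus\ol\om_{a+b}$, minus lower layers, and in particular in a neighbourhood of $\pa\Om$), and on $\pa\om_{a+b}$ it has constant Cauchy data $u\equiv a_{a+b}$, $u_\nu\equiv c_{a+b}$. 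Since Proposition~\ref{one phase proposition}, or more directly Serrin's theorem applied to $\om_{a+b}$ via the auxiliary transformation used for Theorem~\ref{(0,2) symmetry}, already forces $\om_{a+b}$ (and the region out to where analyticity holds) to be radially symmetric, one concludes that $u$ is radial in the full annular region $\Om\setminus\ol\om_{a+b-1}$ up to $\pa\Om$, hence $|\gr u|$ is constant on each level set there, in particular on $\pa\Om$. The last step is to upgrade the regularity of $\pa\Om$: being a level set of a radial analytic function with nonvanishing gradient (the gradient cannot vanish on $\pa\Om$ since $u<0$ inside and $u=0$ on $\pa\Om$, by Hopf's lemma), $\pa\Om$ is automatically an analytic hypersurface, in particular of class $C^{2,\al}$, so it qualifies as a smooth overdetermined level set in the sense required by the definition of $(a,b)^\star$.

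\textbf{Main obstacle.} The delicate point is the reduction from ``$\pa\Om$ made of regular points'' to genuine smoothness and to constancy of $|\gr u|$ on $\pa\Om$: a priori $\pa\Om$ need not be smooth at all, so $u_\nu$ on $\pa\Om$ is not even classically defined. The resolution is that the overdetermined level set $\pa\om_{a+b}$, sitting strictly inside $\Om$, already pins down the geometry: Serrin's rigidity gives that $u$ is radial on an annular neighbourhood of $\pa\om_{a+b}$, and then unique continuation for the analytic elliptic equation $\De u = N$ propagates radial symmetry of $u$ throughout the connected region $\Om\setminus\ol\om_{a+b-1}$, all the way out to $\pa\Om$; since $u$ is continuous up to $\pa\Om$ and vanishes there, $\pa\Om$ must coincide with a sphere, which is smooth, and $|\gr u|$ is constant on it. Thus the apparent lack of boundary regularity is not an obstruction but is actually a consequence. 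Once this is in place, the hypotheses of $(i)$ are met and the conclusion that $(D,\Om)$ are concentric balls follows immediately; one should only take a moment to check that the inner layers $\om_1,\dots,\om_{a+b-1}$ are carried along unchanged, so that the data of a type $(a,b)^\star$ overdetermination is genuinely present.
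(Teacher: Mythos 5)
The key move in the paper's proof is different from yours and fixes a genuine gap in your argument. The paper does \emph{not} try to promote $\pa\Om$ to an overdetermined level set; instead it observes that the pair $(D,\om_{a+b})$ (with the smaller outer domain $\om_{a+b}$ in place of $\Om$) already satisfies an overdetermination of type $(a,b)^\star$: indeed, $\om_{a+b}\subset\subset\Om$ forces $\pa\om_{a+b}$ to be an analytic hypersurface by interior regularity, the inner level sets $\om_1,\dots,\om_{a+b-1}$ remain overdetermined, and $u-a_{a+b}$ restricted to $\om_{a+b}$ solves \eqref{eq:Dirichlet problem} in $(D,\om_{a+b})$. Hypothesis $(i)$ applied to $(D,\om_{a+b})$ then gives that $(D,\om_{a+b})$ are concentric balls and $u$ is radial in $\om_{a+b}$; analytic continuation in $\Om\setminus\ol D$ (plus continuity up to $\pa\Om$, by boundary regularity) shows $\pa\Om$ is a concentric sphere.

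The step in your argument that fails is where you invoke Serrin's theorem, or Proposition~\ref{one phase proposition}, ``applied to $\om_{a+b}$'' to force $\om_{a+b}$ and hence $u$ near $\pa\Om$ to be radially symmetric. This is not available: $\om_{a+b}$ contains the inclusion $D$ with conductivity $\sg_c\neq 1$, so inside $\om_{a+b}$ the PDE is the transmission problem $\dv(\sg\gr u)=N$, not $\De u=N$. Serrin's theorem and Proposition~\ref{one phase proposition} are one-phase results and simply do not apply to $\om_{a+b}$ --- indeed, establishing when such rigidity holds in the two-phase setting is precisely the content of the whole paper, and Theorem~\ref{(1,1) bifurcation} shows it can fail. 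The auxiliary transformation you refer to (used in Proposition~\ref{prop:special case pa D level set}) requires $\pa D$ itself to be contained in a level set, which is not part of the hypotheses here. Your plan would also be mildly circular even if the Serrin step went through: once you know $u$ is radial up to $\pa\Om$ you no longer need $(i)$. The repair is to apply $(i)$ directly to the pair $(D,\om_{a+b})$, as the paper does, rather than trying to upgrade $(D,\Om)$ to a starred configuration first.
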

\begin{proof}
Suppose that $(i)$ holds and that the pair $(D,\Om)$ satisfies an overdetermination of type $(a,b)$ with $\pa\Om$ made of regular points for the Dirichlet Laplacian. In other words, $(D,\om_{a+b})$ satisfies an overdetermination of type $(a,b)^\star$. Thus, by $(i)$, $(D,\om_{a+b})$ are concentric balls and the solution $u$ of \eqref{eq:Dirichlet problem} is radial in $\om_{a+b}$. Finally, since $u$ is real analytic in $\Om\setminus \ol D$, $u$ is radial up to $\ol\Om$. In particular, since $\pa\Om$ is a level set of $u$, $\pa\Om$ is a sphere concentric with $D$, proving $(ii)$.
\end{proof}
\begin{lemma}\label{star implies nonstar for asymmetry}
Let $a\in \NN\cup\{0\}$, $b\in \NN$ and let $D$ be a bounded domain. Then, $(i)\implies (ii)$.
\begin{enumerate}[(i)]
    \item There exists some domain $\Om\supset\supset D$ such that $(D,\Om)$ are not concentric balls but they satisfy an overdetermination of type $(a,b)^\star$. 
    \item There exists some domain $\widetilde\Om\supset\supset D$ such that $(D,\widetilde\Om)$ are not concentric balls but they satisfy an overdetermination of type $(a,b)$. 
\end{enumerate}
\end{lemma}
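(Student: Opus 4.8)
The plan is to prove Lemma~\ref{star implies nonstar for asymmetry} by a scaling argument: starting from the pair $(D,\Om)$ furnished by $(i)$, for which $\pa\Om = \pa\om_{a+b}$ is an overdetermined level set of the solution $u$ to \eqref{eq:Dirichlet problem} in $(D,\om_{a+b})$, I want to enlarge the outer domain slightly to a new domain $\widetilde\Om \supset\supset \Om$ in such a way that the original $\pa\Om$ becomes one of the $b$ interior overdetermined level sets required by a type-$(a,b)$ overdetermination, while $\widetilde\Om$ itself is forced to be a non-sphere. The natural way to do this is to use the fact that, away from $\pa D$, the solution is real analytic, and that the PDE $\De u = N$ in $\Om\setminus\ol D$ can be extended: we solve a Cauchy problem for $\De v = N$ with Cauchy data $v = \restr{u}{\pa\Om} =: a_{a+b}$ and $v_\nu = \restr{u_\nu}{\pa\Om} =: c_{a+b}$ prescribed on the analytic hypersurface $\pa\Om$, and by Cauchy--Kovalevskaya this yields an analytic solution $v$ in a (one-sided) neighborhood of $\pa\Om$ outside $\Om$. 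Gluing $u$ on $\ol\Om$ with $v$ on the collar, we get a function defined on a slightly larger region; we then let $\widetilde\Om$ be a level set of this extended function lying just outside $\pa\Om$.

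The key steps, in order, would be: (1) Record that from $(i)$ we have the solution $u$ of \eqref{eq:Dirichlet problem} in $(D,\om_{a+b})$ with $\om_{a+b}=\Om$, which admits overdetermined level sets $\pa\om_1,\dots,\pa\om_{a+b-1}$ in the configuration \eqref{om_i} and additionally $\pa\Om=\pa\om_{a+b}$ as an overdetermined level set, with $u\equiv a_{a+b}$, $u_\nu\equiv c_{a+b}$ on $\pa\Om$. Since $(D,\Om)$ is \emph{not} a pair of concentric balls, at least one of these level sets is non-radial (indeed, by Proposition~\ref{one phase proposition}-type reasoning one expects $\pa\Om$ itself to be non-spherical, but we do not need to pin down which). (2) Apply the Cauchy--Kovalevskaya theorem to the equation $\De v = N$ on the outer side of the analytic hypersurface $\pa\Om$ with Cauchy data $(a_{a+b}, c_{a+b})$; this produces an analytic extension $v$ of $u$ across $\pa\Om$ into a collar $\cN \setminus \ol\Om$ for some open $\cN \supset \ol\Om$. (3) Choose $t$ slightly larger than $a_{a+b}$ (sign adjusted according to the direction in which $v$ increases outward — which is controlled by $c_{a+b}\ne 0$, itself a consequence of Hopf's lemma applied to $u$ on $\pa\Om$) so that the level set $\{v = t\}$ is a connected analytic hypersurface contained in the collar, and set $\widetilde\Om$ to be the bounded domain it encloses. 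Replacing $u$ on $\ol\Om$ by $u+$const if needed so that $\widetilde u := v$ (extended by $u$ inside) vanishes on $\partial\widetilde\Om$, one checks that $\widetilde u$ solves \eqref{eq:Dirichlet problem} in $(D,\widetilde\Om)$ — here one must verify that $D \subset\subset \widetilde\Om$, that $\widetilde\Om\setminus\ol D$ is connected, that the transmission conditions on $\pa D$ are untouched, and that $\De\widetilde u = N$ holds across $\pa\Om$ because both $u$ (from inside) and $v$ (from outside) are the \emph{same} analytic solution of the same equation, so there is no spurious jump. (4) Observe that $\pa\om_1,\dots,\pa\om_{a+b-1}$, together with $\pa\Om$ itself, now constitute $a$ interior-of-$D$ level sets (namely $\pa\om_1,\dots,\pa\om_a$) and $b$ level sets in $\widetilde\Om\setminus\ol D$ (namely $\pa\om_{a+1},\dots,\pa\om_{a+b-1},\pa\Om$), all overdetermined for $\widetilde u$, so $(D,\widetilde\Om)$ satisfies an overdetermination of type $(a,b)$. (5) Finally, argue $(D,\widetilde\Om)$ is not a pair of concentric balls: if it were, $\widetilde u$ would be radial, hence (by analyticity across $\pa\Om$ and unique continuation) $u$ would be radial on $\ol\Om$, forcing $(D,\Om)$ to be concentric balls, contradicting $(i)$.

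The main obstacle I expect is step (3)–(5): guaranteeing that the new outer boundary $\{v=t\}$ is a genuine, embedded, connected, smooth hypersurface enclosing a domain $\widetilde\Om$ with $D\subset\subset\widetilde\Om$ and $\widetilde\Om\setminus\ol D$ connected, for $t$ close enough to $a_{a+b}$. This is a soft transversality/implicit-function-theorem statement — since $\nabla v \ne 0$ near $\pa\Om$ by Hopf and continuity, and $\{v=a_{a+b}\}\supseteq\pa\Om$ is itself a smooth closed hypersurface, nearby level sets are smooth closed hypersurfaces $C^\infty$-close to $\pa\Om$ — but it has to be written carefully, in particular to ensure $\widetilde\Om$ still contains all of $\om_{a+b}=\Om$ so that the configuration \eqref{om_i} is preserved and all previously chosen $\om_i$ remain compactly nested in the right pieces. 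A secondary, more bookkeeping-type issue is the additive normalization: one wants $\widetilde u = 0$ on $\partial\widetilde\Om$, which one arranges by replacing $u$ (and correspondingly $v$) by $u - t$ throughout, noting that $\De(u-t)=N$, the transmission conditions are invariant under adding a constant, and the level sets $\pa\om_i$ remain level sets (with shifted values $a_i - t$), so nothing essential changes. Everything else is routine once Cauchy--Kovalevskaya is invoked, and the non-concentric-balls conclusion in step (5) follows cleanly from real-analyticity of $u$ in $\Om\setminus\ol D$ together with the hypothesis in $(i)$.
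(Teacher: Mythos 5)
Your proposal is correct and follows essentially the same route as the paper: extend the solution across the analytic overdetermined boundary $\pa\Om$ via the Cauchy--Kovalevskaya theorem (analyticity of $\pa\Om$ coming from Kinderlehrer--Nirenberg), take a nearby level set $\{\widetilde u = \ve\}$ as the new outer boundary $\pa\widetilde\Om$, and observe that $\pa\Om = \pa\om_{a+b}$ now serves as a compactly-contained overdetermined level set in $(D,\widetilde\Om)$. The only minor difference is at the end, where the paper argues directly that $\widetilde\Om$ is not a ball (using that $\Om$ is not a ball and $\pa\widetilde\Om$ is $C^k$-close to $\pa\Om$ for small $\ve$), whereas your contradiction argument in step (5) — that radiality of $\widetilde u$ on concentric balls $(D,\widetilde\Om)$ would force $(D,\Om)$ to be concentric balls — is slightly cleaner in that it sidesteps having to first rule out $\Om$ itself being a ball.
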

\begin{proof}
Suppose $(i)$ holds, that is, there exists some bounded domain $\Om$ such that $(D,\Om)$ are not concentric balls but satisfy an overdetermination of type $(a,b)^\star$. By the local regularity result \cite[Theorem 2]{Kinderlehrer Nirenberg}, the overdetermined level set $\pa\Om$ is an analytic surface. Thus, one can apply the Cauchy--Kovalevskaya theorem to construct an extension $\widetilde u$ of the solution $u$ to problem \eqref{eq:Dirichlet problem} such that $\De \widetilde u= N$ in a small neighborhood $U$ of $\pa\Om$ by imposing the following Cauchy data:
\begin{equation*}
\widetilde u = \restr{u}{\pa\Om} \equiv 0,  \quad \widetilde{u}_\nu = u_\nu\equiv {\rm const.}>0 \quad \ton \pa\Om.      
\end{equation*}
Since, by construction $\widetilde u \equiv 0$ and $\widetilde{u}_\nu \equiv {\rm const.}>0$ on $\pa\Om$, for all sufficiently small $\ve>0$, one can find a larger domain $\widetilde\Om\supset\supset\Om$ with $\pa \widetilde\Om\subset U$, such that $\widetilde{u}=\varepsilon$ on $\pa\widetilde\Om$ (the interested reader is invited to compare this to the similar construction performed in section \ref{sec:(a,0)}, under less straightforward assumptions).
Recall that, by assumption, $(D,\Om)$ are not concentric balls. If $D$ is not a ball, then, in particular, $(D,\widetilde\Om)$ are also not concentric balls. On the other hand, if $D$ is a ball and $\Om$ is not a ``ball concentric with $D$" (by this we mean that $\Om$ may or may not be a ball, but if it is, its center must be distinct from that of $D$), by the arbitrariness of $\ve>0$, we can choose some $\ve>0$ so that the enlarged domain $\widetilde\Om$ is also not a ``ball concentric with $D$".
It is now immediate to notice that the function $\widetilde u -\ve$ is a solution of \eqref{eq:Dirichlet problem} in $(D,\widetilde\Om)$, which satisfies an overdetermination of type $(a,b)$ with $\om_{a+b}:=\Om$, proving $(ii)$.
\end{proof}

\subsection{On the type of overdetermination}
For $a_1,b_1,a_2,b_2\in\NN\cup\{0\}$, we say that $(a_1,b_2)\le (a_2,b_2)$ if and only if $a_1\le a_2$ and $b_1\le b_2$. 

Clearly, if $(a_1,b_2)\le (a_2,b_2)$ holds, an overdetermination of type $(a_2,b_2)$ is ``stronger" than one of type $(a_1,b_1)$. 
In particular, if for some $(a_1,b_1)$ we manage to show that overdetermination of type $(a_1,b_1)$ implies spherical symmetry, then the same conclusion must hold for any $(a_2,b_2)$ with $(a_1,b_1)\le (a_2,b_2)$. 
On the other hand, if there exists a pair $(D,\Om)$ that is not given by concentric balls but satisfies an overdetermination of type $(a_1,b_1)$, then the same pair $(D,\Om)$ trivially satisfies overdetermination of type $(a_2,b_2)$ for all $(a_2,b_2)\le (a_1,b_1)$. 

On a related note, we remark that any pair $(D,\Om)$ that satisfies an overdetermination of type $(1,b)$ must also satisfy an overdetermination of type $(a,b)$ for all $a\in \NN$. Indeed, if $u$ denotes the solution to \eqref{eq:Dirichlet problem} in $(D,\Om)$, then Serrin's result \cite{Se1971} applied to $\om_1$ yields that $\om_1$ is a ball and $\restr{u}{\om_1}$ is radial with respect to the center of $\om_1$. As a consequence, all level sets of $u$ that lie inside $\om_1$ are overdetermined. In other words, $(D,\Om)$ satisfies an overdetermination of type $(a,b)$ for all $a\in \NN$.

To conclude, we remark that the case of overdetermination of type $(0,0)$ (that is, no overdetermination) is trivial, while it is known that overdetermination of type $(0,1)$ is not enough to obtain spherical symmetry even under the assumption that $D$ is a ball (this follows from the bifurcation analysis done in \cite{CY2020}, where an overdetermination of type $(0,1)^\star$ is considered).
Thus, by the discussion above, our analysis is simplified to such an extent that, in order to show Theorems \ref{(1,1) D=ball symmetry} and \ref{(1,1) bifurcation} it is enough to consider overdetermination of type $(1,1)^\star$.


\section{Symmetry results for overdetermination of type \texorpdfstring{$(0,2)$}{(0,2)}}\label{sec:(02)}
We start by providing weak and strong maximum-type principles in the two-phase setting.

\begin{lemma}[Weak maximum principle]\label{weak maximum pple}
Let $u$ solve \eqref{eq:Dirichlet problem}. Then, $u\le 0$ in $\Om$.
\end{lemma}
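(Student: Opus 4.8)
The plan is to prove the weak maximum principle $u \le 0$ in $\Om$ directly from the weak formulation \eqref{eq: weak form}, exploiting that $\sg$ is positive and bounded (it takes only the two values $\sg_c>0$ and $1$) so that the bilinear form $(\varphi,\psi)\mapsto \int_\Om \langle\sg\gr\varphi,\gr\psi\rangle\,dx$ is coercive on $H_0^1(\Om)$. First I would observe that the right-hand side of \eqref{eq: weak form} is nonpositive whenever the test function $\varphi$ is nonnegative: indeed, $N\int_\Om\varphi\,dx \ge 0$ for $\varphi\ge 0$, so the equation says $-\int_\Om\langle\sg\gr u,\gr\varphi\rangle\,dx \ge 0$, i.e. $u$ is subharmonic in the weighted sense.

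Next I would choose the classical Stampacchia-type test function $\varphi := u^+ = \max\{u,0\} \in H_0^1(\Om)$ (this lies in $H_0^1(\Om)$ since $u\in H_0^1(\Om)$ and truncation is bounded on $H^1$, with $u^+=0$ on $\pa\Om$ because $u=0$ there). Plugging $\varphi = u^+$ into \eqref{eq: weak form} and using that $\gr u^+ = \cX_{\{u>0\}}\gr u$ almost everywhere, the left-hand side becomes $-\int_{\{u>0\}}\sg|\gr u|^2\,dx = -\int_\Om \sg|\gr u^+|^2\,dx$, while the right-hand side is $N\int_\Om u^+\,dx \ge 0$. Hence
\begin{equation*}
    \int_\Om \sg|\gr u^+|^2\,dx = -N\int_\Om u^+\,dx \le 0.
\end{equation*}
Since $\sg \ge \min\{1,\sg_c\} > 0$, the left-hand side is bounded below by $\min\{1,\sg_c\}\int_\Om|\gr u^+|^2\,dx \ge 0$, so in fact $\int_\Om|\gr u^+|^2\,dx = 0$. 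Therefore $\gr u^+ = 0$ a.e., and since $u^+\in H_0^1(\Om)$ on the connected domain $\Om$, the Poincaré inequality forces $u^+\equiv 0$, i.e. $u\le 0$ in $\Om$.

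There is no serious obstacle here; the argument is the standard weighted-energy truncation and the only points requiring a word of care are that $u^+$ is an admissible test function (which follows from $u\in H_0^1(\Om)$ together with the lattice property of $H^1$ and the fact that the positive part of a function vanishing on the boundary still vanishes on the boundary) and that the chain rule $\gr u^+=\cX_{\{u>0\}}\gr u$ holds for $H^1$ functions. The positivity and boundedness of $\sg$, which make the bilinear form coercive, is what turns the inequality $\int_\Om\sg|\gr u^+|^2\,dx\le 0$ into $u^+\equiv 0$; this is precisely where the two-phase structure (as opposed to a merely nonnegative, possibly degenerate $\sg$) is used, though only in the mild form $\inf_\Om\sg>0$.
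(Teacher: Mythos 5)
Your proof is correct and follows essentially the same approach as the paper: both use the test function $\varphi = u^+ = \max\{u,0\}\in H_0^1(\Om)$ in the weak formulation \eqref{eq: weak form} to derive $0\le\int_\Om\sg|\gr u^+|^2\,dx = -N\int_\Om u^+\,dx\le 0$. The only (cosmetic) difference is in the final step: the paper concludes from $\int_\Om u^+\,dx=0$ together with $u^+\ge 0$ that $u^+\equiv 0$, whereas you conclude from $\int_\Om|\gr u^+|^2\,dx=0$ via the Poincar\'e inequality; both are immediate from the same chain of equalities.
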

\begin{proof}
    Take $\varphi:= \max\{u,0\} \in H^1_0(\Om)$, the positive part of $u$, as a test function. Then
    \begin{equation*}
        0\le \int_\Om \langle\sg \gr u , \gr \varphi\rangle \, dx = -N\int_\Om \varphi\, dx\le 0. 
    \end{equation*}
As a result, $\int_\Om \varphi\, dx=0$, which in turn implies $\varphi\equiv 0$ in $\Om$. This concludes the proof.     
\end{proof}

\begin{lemma}[Strong maximum principle]\label{strong maximum pple}
    Let $u$ solve \eqref{eq:Dirichlet problem}. If $\pa D$ is of class $C^{1,\al}$, then $u<0$ in $\Om$.
\end{lemma}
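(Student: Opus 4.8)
The plan is to combine the weak maximum principle (Lemma~\ref{weak maximum pple}), which already gives $u \le 0$ in $\Omega$, with the classical strong maximum principle applied phase-by-phase, using the regularity $\pa D \in C^{1,\al}$ to pass information across the interface. First I would recall that by elliptic regularity for transmission problems (the references \cite{LiVogelius, XB2013} cited just before \eqref{eq: transmission problem}), since $\pa D \in C^{1,\al}$ the solution $u$ belongs to $C^{1,\al}(\ol D)$ and $C^{1,\al}(\ol{\Om\setminus D})$, and it satisfies the classical transmission problem \eqref{eq: transmission problem}; in particular $u$ is (real-)analytic in the open sets $D$ and $\Om\setminus\ol D$ where it solves $\sg_c\De u = N$ and $\De u = N$ respectively.

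Next I would argue by contradiction: suppose $u(x_0) = 0$ for some $x_0 \in \Om$. Since $\De u = N > 0$ in each phase, $u$ is subharmonic there, so in each connected component of $D$ and of $\Om\setminus\ol D$ the classical strong maximum principle applies to $u$ (which attains its maximum value $0$). Hence if $x_0$ lies in the open set $\Om\setminus\ol D$, then $u \equiv 0$ on the connected component of $\Om\setminus\ol D$ containing $x_0$; but that component is by hypothesis all of $\Om\setminus\ol D$ (it is connected), and it touches $\pa\Om$, which is fine, but it also forces $u \equiv 0$ there, contradicting $\De u = N \ne 0$. If instead $x_0 \in D$, the same argument gives $u \equiv 0$ on the component of $D$ containing $x_0$, again contradicting $\De u = N \ne 0$ there. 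The remaining possibility is $x_0 \in \pa D$. Here I would use that $u \in C^{1,\al}$ up to $\pa D$ from the outside: on $\Om\setminus\ol D$ we have $u \le 0 = u(x_0)$, so by the Hopf boundary point lemma applied at $x_0$ (the interior ball condition holds from the $\Om\setminus\ol D$ side since $\pa D$ is $C^{1,\al}$, actually $C^2$ would be cleanest but $C^{1,\al}$ suffices for Hopf after the usual adjustment, or one simply notes $u$ is not identically $0$ near $x_0$), either $u \equiv 0$ near $x_0$ in $\Om\setminus\ol D$ — impossible as above — or $u_\nu(x_0) < 0$ with $\nu$ the outward normal to $\Om\setminus\ol D$ at $x_0$, i.e. the normal pointing into $D$. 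By the transmission condition $\jump{\sg u_\nu} = 0$, this forces $u_\nu(x_0)$ from the $D$-side to also be strictly negative in the same direction (up to the positive factor $\sg_c$), so $u$ is strictly decreasing as we enter $D$ along $-\nu$; combined with $u(x_0) = 0$ and $u \le 0$ in $D$ this is consistent, but then at an interior maximum point of $u$ in $\ol D$ — which must exist and equal $0$ since $u \le 0$ and $u = 0$ at $x_0 \in \pa D$ — we again derive $u \equiv 0$ on a component of $D$, the final contradiction.

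I expect the delicate point to be the argument at an interface point $x_0 \in \pa D$: one has to make sure that the Hopf lemma is legitimately applicable (interior sphere condition, which follows from $\pa D \in C^{1,\al}$ in the standard formulations, or can be arranged), and then use the transmission condition $\jump{\sg u_\nu}=0$ correctly to transfer the strict sign of the normal derivative from one phase to the other. An alternative, perhaps cleaner, route I would mention is to observe that since $u$ is subharmonic in each phase and continuous across $\pa D$ with the flux-matching condition, $u$ cannot attain an interior maximum on $\pa D$ either: if it did, the outer-phase Hopf lemma would give a strictly negative normal derivative from the outside, hence (by $\jump{\sg u_\nu}=0$) a strictly negative normal derivative from the inside in the same geometric direction, which is incompatible with $x_0$ being a maximum of the restriction of $u$ to $\ol D$. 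Either way, every case leads to $u$ being constantly $0$ on an open set where $\De u = N \ne 0$, which is absurd; therefore $u < 0$ throughout $\Om$.
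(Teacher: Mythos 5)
Your proposal follows essentially the same approach as the paper: the heart of both is the incompatibility, at a putative zero $x_0\in\pa D$, between the Hopf boundary point lemma applied on each side of $\pa D$ and the transmission condition $\jump{\sg u_\nu}=0$, and your closing ``alternative, perhaps cleaner, route'' is exactly the argument the paper carries out. One sign slip to watch in your main version: with $\nu$ the outward normal to $\Om\setminus\ol D$ at $x_0$ (pointing into $D$), Hopf for the subharmonic $u$ at a boundary maximum gives $u_\nu(x_0)>0$, not $<0$; once tracked consistently, the transmission condition transports this to a positive $\nu$-derivative from the $D$-side, which already contradicts Hopf for $u$ in $D$ (whose outward normal is $-\nu$), so the subsequent detour through an ``interior maximum'' of $u$ in $\ol D$ is unnecessary.
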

\begin{proof}
We will show that $\restr{u}{\pa D}<0$: the conclusion then follows by the maximum principle in $D$ and $\Om\setminus\ol D$. We recall that since $\pa D$ is of class $C^{1,\al}$, then $u$ is of class $C^{1,\al}$ in both $\ol D$ and $\Om\setminus D$ and it satisfies \eqref{eq: transmission problem}.
Let $u^-:=\restr{u}{\ol D}$, $u^+:=\restr{u}{\ol \Om\setminus D}$. Notice that, $\restr{u}{\pa D}\le 0$ by Lemma~\ref{weak maximum pple}. The functions $u^-$ and $u^+$ solve: 

\begin{minipage}{0.5\textwidth}
  \begin{equation*}
      \begin{cases}
          \sg_c \De u^-=N\quad \tin D,\\
          u^-=\restr{u}{\pa D}\le 0 \quad \ton \pa D.
      \end{cases}
  \end{equation*}
  \end{minipage}
\begin{minipage}{0.5\textwidth}
  \begin{equation*}
      \begin{cases}
         \De u^+=N\quad \tin \Om\setminus \ol D,\\
          u^+=\restr{u}{\pa D}\le 0 \quad \ton \pa D,\\
          u^+=0\quad \ton \pa\Om.
      \end{cases}
  \end{equation*}
  \end{minipage}
Now, assume by contradiction that $\displaystyle\max_{\pa D}u=u(x_0)=0$ for some $x_0\in \pa D$. Then, by the Hopf lemma (see for instance \cite{ABMMZ}) at $x_0$ for $u^-$ and $u^+$, we have $u_\nu^-(x_0)>0$ and $u_\nu^+(x_0)<0$. On the other hand, the transmission condition $\jump{\sg u_\nu}=0$ on $\pa D$ yields 
  \begin{equation*}
      0<\sg_c u_\nu^-(x_0)=u_\nu^+(x_0)<0,
  \end{equation*}
a contradiction. 
\end{proof}

We now provide the desired symmetry result for overdetermination of type $(0,2)$.
\begin{proof}[Proof of Theorem~\ref{(0,2) symmetry}]
Under the notation given by \eqref{om_i} and \eqref{a_i c_i}, Lemma~\ref{strong maximum pple} in $\om_2$ (applied to $u-a_2$) gives that $u<a_2$ in $\om_2$ and hence $a_1 <a_2$. Moreover, Lemma~\ref{strong maximum pple} in $\om_1$ gives that $u < a_1 = u|_{\pa\om_1} $ in $\om_1$, and hence that $c_1 = u_\nu \ge 0$ on $\pa\om_1$. Thus, \cite[Theorem 2]{Sirakov}, which is based on the moving planes method, applies (to the function $a_1-u$ in $\om_2\setminus\ol\om_1$), giving that $\om_1$ and $\om_2$ must be concentric balls and $u$ is radial in $\ol\om_2\setminus \om_1$. Now, by analyticity, $u$ is radial in the whole $\Om\setminus\ol D$, and, in particular, the level set $\pa\Om$ is a sphere. In other words, $\Om$ is a ball, and thus, we can use \cite[Theorem 5.1]{Sak bessatsu} to obtain that $D$ and $\Om$ are concentric balls, which is the desired result.
\end{proof}

\begin{remark}\label{rem:improvement of ThmI using B}
By applying \cite[Theorem I]{bessatsata} instead of \cite[Theorem 5.1]{Sak bessatsu} in the final step of the proof, we realize that the conclusion of Theorem \ref{(0,2) symmetry} holds even without assuming the $C^2$ regularity of $\partial D$.
\end{remark}

\begin{remark}\label{remark bello}
A different ``external" double overdetermination related to that of type $(0,2)$ has been considered in \cite{cava symm asymm}. In fact, \cite[Theorem I]{cava symm asymm} shows that the double overdetermination $u_{\nu}\equiv \text{const.}$, $u_{\nu\nu}\equiv \text{const.}$ on the outer boundary leads to symmetry in the two-phase setting. In the same paper, the author proves that the analogous $k$-fold overdetermination does not necessarily imply radial symmetry in the $k$-phase setting. 
    We stress that the construction in \cite[Theorem II]{cava symm asymm} shows that the presence of arbitrarily many overdetermined level sets in the outermost layer of a $k$-phase domain is not enough to obtain radial symmetry for $k\ge 3$. Thus, also in this sense, Theorem~\ref{(0,2) symmetry} can be considered sharp. 
    \end{remark}

\section{Symmetry results in particular cases for overdeterminations of type $(1,0)$ and $(1,1)^\star$ }\label{sec:particular cases}
We start by analyzing the case $(1,0)$, that is the case where the solution $u$ of \eqref{eq:Dirichlet problem} also satisfies
\begin{equation}\label{eq:internal overdetermination}
    u = a_1 , \quad u_\nu= c_1 \quad \text{ on } \pa \om_1, \quad \text{ where } \om_1 \subset\subset D .
\end{equation}

Let $\nu$ denote the exterior unit normal to both $D$ and $\Om$.
Throughout the present section we assume that $\Om$ and $D$ are of class $C^{2,\al}$ so that $u \in C^{2,\al}(\ol{\Om} \setminus D) \cap C^{2,\al}( \ol{D}) $ (see for instance \cite{XB2013, Zhuge}).

In what follows, we will make use of tools of tangential calculus on $\pa D$. To this aim, we recall the following definition. For $x\in \pa D$, $x_\tau$ will denote its tangential component, that is
$$
x_\tau := x- \langle x, \nu \rangle \nu \quad  \text{ on } \pa D .
$$
Given a function $f \in C^1 (\pa D)$ we define its tangential gradient by 
$$\gr_\tau f : = \na \Tilde{f} - \langle \na \Tilde{f}, \nu \rangle \nu  \quad  \text{ on } \pa D ,$$
where $\Tilde{f}$ is a $C^1$ extension of $f$ to a neighborhood of $\pa D$. It is easy to check and well-known that such a definition does not depend on the particular choice of the extension. Moreover, given a $C^1$ vector field $w =(w_1, \dots, w_N) : \pa D  \to \RR^N$, we denote by $D_\tau w$ the matrix whose $i$-th row is given by $\gr_\tau w_i$, for $i=1, \dots , N$.
Similarly, for $w \in C^1 (\pa D, \rn)$, the tangential divergence of $w$ is defined as $\dv_\tau w := \dv \Tilde w -\langle D\Tilde w\, \nu, \nu \rangle$, where $\Tilde w$ is a $C^1$ extension of $w$ to a neighborhood of $\pa D$.
The mean curvature of $\pa D$ will be denoted by $H:=\frac{1}{N-1}\dv_\tau \nu$. We remark that, following this definition, the mean curvature of the unit sphere is $H\equiv 1$. Moreover, in what follows, we will also make use of the Laplace--Beltrami operator $\De_\tau$, defined as $\dv_\tau\circ\gr_\tau$. Now we recall the following well-known identity for $f\in C^2(\ol D)$ (see for instance, \cite[Proposition 5.4.12]{HP2005} or \cite{Reilly}):
\begin{equation}\label{Reilly?}
    \De f= \De_\tau f +(N-1)H f_\nu + f_{\nu\nu} \quad \text{on }\pa D. 
\end{equation}
In the following lemma, we will present some general identities that will come in handy later on in our computations. 
\begin{lemma}\label{lem: jump}
Let $U$ be a neighborhood of $\pa D$. Then the following hold:  
\begin{enumerate}[(i)]
\item If $f$ belongs to either $C^1(\ol D\cap U)$ or $C^1(U\setminus D)$, then $\gr f= \gr_\tau f+ f_\nu \nu$ on $\pa D$.
\item If $f$ belongs to either $C^2(\ol D\cap U)$ or $C^2(U\setminus D)$, then $\gr^2 f\, \nu = f_{\nu\nu}\nu+ \gr_\tau f_\nu - D_\tau \nu \gr_\tau f$ on $\pa D$. 
\item Let $\ga\in\RR$. If $f$ satisfies $\De f= \ga$ in either $\ol D\cap U$ or $U\setminus D$ and is of class $C^2$ up to $\pa D$, then $f_{\nu\nu}=\ga - \De_\tau f - (N-1) H f_\nu$ on $\pa D$. 
\end{enumerate} 
Moreover, let $u$ denote a solution to $\dv(\sg\gr u)=N$ in $U$. Then the following hold:
\begin{enumerate}[(i)]\setcounter{enumi}{3}
\item If $u\in C^1(\ol D\cap U)\cap C^1(\ol U\setminus D)$, then $\jump{\sg\gr u}=\jump{\sg}\gr_\tau u$ on $\pa D$,
\item If $u\in C^2(\ol D\cap U)\cap C^2(\ol U\setminus D)$, then $\jump{\sg \gr^2 u\, \nu}=\jump{\sg u_{\nu\nu}}\nu-\jump{\sg}D_\tau \nu \gr_\tau u$ on $\pa D$,
\item If $u\in C^2(\ol D\cap U)\cap C^2(\ol U\setminus D)$, then $\jump{\sg u_{\nu\nu}}=-\jump{\sg}\De_\tau u$ on $\pa D$.

\end{enumerate}
\end{lemma}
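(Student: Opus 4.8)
The plan is to prove (i)--(iii) as purely local statements about a single $C^1$ (resp.\ $C^2$) function defined on one side of $\pa D$, and then to deduce (iv)--(vi) by applying (i)--(iii) from each side of $\pa D$ and combining the two sides via the transmission conditions $\jump{u}=0$ and $\jump{\sg u_\nu}=0$ on $\pa D$ --- which hold since $\pa D$ is smooth and $u$ solves \eqref{eq: weak form} --- together with two elementary remarks: $\sg$ is a (nonzero) constant on each side of $\pa D$, and every tangential object ($\gr_\tau$, $\dv_\tau$, $\De_\tau$, as well as $\nu$, $D_\tau\nu$ and $H$) is determined by the boundary traces of its arguments, hence is single-valued on $\pa D$ whenever the two one-sided traces agree.

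For (i) I would simply split $\na\tilde{f} = \bigl(\na\tilde{f}-\langle\na\tilde{f},\nu\rangle\nu\bigr) + \langle\na\tilde{f},\nu\rangle\nu$ on $\pa D$, which is exactly $\gr_\tau f + f_\nu\nu$. For (ii) I would extend $\nu$ to $U$ as the gradient of the signed distance to $\pa D$, so that $D\nu$ is symmetric and $D\nu\,\nu=0$ on $\pa D$; differentiating $f_\nu=\langle\na f,\nu\rangle$ yields $\na f_\nu=\gr^2 f\,\nu+D\nu\,\na f$, and then applying (i) to $f_\nu$ and to $f$, and using $D\nu\,\gr_\tau f=D_\tau\nu\,\gr_\tau f$ (valid because $\gr_\tau f$ is tangential), gives $\gr^2 f\,\nu=f_{\nu\nu}\nu+\gr_\tau f_\nu-D_\tau\nu\,\gr_\tau f$; a posteriori the right-hand side involves only the trace of $\nu$ on $\pa D$, so the identity is independent of the chosen extension. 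For (iii) I would just solve \eqref{Reilly?} for $f_{\nu\nu}$, after noting that \eqref{Reilly?} holds verbatim, with the same $\nu$, also for functions that are $C^2$ up to $\pa D$ from the outside: it is invariant under $\nu\mapsto-\nu$, since then $H\mapsto-H$ and $f_\nu\mapsto-f_\nu$ while $\De_\tau f$ and $f_{\nu\nu}$ are unchanged.

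For (iv): apply (i) on each side to write $\sg\gr u=\sg\gr_\tau u+\sg u_\nu\nu$, take the jump, and use $\jump{\sg u_\nu}=0$ together with $\jump{\sg\gr_\tau u}=\jump{\sg}\gr_\tau u$ (the tangential gradient of $u$ being single-valued on $\pa D$ because $\jump{u}=0$). For (v): apply (ii) on each side to write $\sg\gr^2 u\,\nu=\sg u_{\nu\nu}\nu+\sg\gr_\tau u_\nu-\sg D_\tau\nu\,\gr_\tau u$ and take the jump; the middle term vanishes because $\sg$ is locally constant, so $\sg\gr_\tau u_\nu=\gr_\tau(\sg u_\nu)$ and hence $\jump{\sg\gr_\tau u_\nu}=\gr_\tau\jump{\sg u_\nu}=0$, while in the last term $D_\tau\nu\,\gr_\tau u$ is single-valued, giving $\jump{\sg D_\tau\nu\,\gr_\tau u}=\jump{\sg}D_\tau\nu\,\gr_\tau u$. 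For (vi): since $\sg$ is locally constant, $u$ solves $\sg\De u=N$ on each side, so $\jump{\sg\De u}=0$; inserting the decomposition $\sg\De u=\sg\De_\tau u+(N-1)H\sg u_\nu+\sg u_{\nu\nu}$ from \eqref{Reilly?} and using $\jump{\sg\De_\tau u}=\jump{\sg}\De_\tau u$ and $\jump{\sg u_\nu}=0$ yields $\jump{\sg u_{\nu\nu}}=-\jump{\sg}\De_\tau u$. (Alternatively (vi) follows by applying (iii) on each side with $\ga=N/\sg_c$ inside and $\ga=N$ outside and invoking the transmission condition $\jump{\sg u_\nu}=0$.)

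All the computations are short and mostly routine. The only step requiring genuine care is (ii): one must split the action of the full Hessian on $\nu$ correctly into the scalar normal term $f_{\nu\nu}\nu$ and the two tangential terms, and then verify the independence of the extension of $\nu$. Beyond that, the entire content of (iv)--(vi) is the bookkeeping of which quantities are continuous across $\pa D$ --- namely those determined by the trace of $u$, together with the co-normal flux $\sg u_\nu$ --- versus which are not --- the purely normal second-order quantities $u_{\nu\nu}$ and $\gr^2 u\,\nu$ --- so the principal ``obstacle'' is merely keeping the jumps consistently oriented.
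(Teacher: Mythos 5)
Your proof is correct and takes essentially the same approach as the paper: items $(i)$--$(iii)$ by direct tangential calculus using the signed-distance extension of $\nu$ (so that $D\nu$ is symmetric and $D\nu\,\nu=0$ on $\pa D$), and items $(iv)$--$(vi)$ by applying $(i)$--$(iii)$ from each side and taking jumps via $\jump{u}=\jump{\sg u_\nu}=0$. The only cosmetic difference is that the paper applies $(i)$--$(iii)$ to the one-sided restrictions of $\sg u$, while you apply them to $u$ and then multiply by the locally constant $\sg$; your remark that \eqref{Reilly?} is invariant under $\nu\mapsto-\nu$ is a nice explicit justification of why that identity can be used from both sides of $\pa D$ with the same $\nu$.
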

\begin{proof}
Item $(i)$ immediately follows by the definition of tangential gradient $\gr_\tau$.

In order to show $(ii)$, we decompose $\gr^2 f \, \nu$ in its tangential and normal components:
\begin{equation*}
    \gr^2 f \, \nu = f_{\nu\nu}\nu + \left( \gr^2 f \, \nu \right)_\tau \quad \text{on } \pa D.
\end{equation*}
The claim will follow once we show that 
\begin{equation}\label{eq:olA expression}
\left( \gr^2 f \, \nu \right)_\tau= \na_\tau f_\nu - D_\tau \nu \na_\tau f \quad \text{on }\pa D.
\end{equation}
To this end, letting $\Tilde{\nu}$ denote a sufficiently smooth unitary extension of $\nu$ (for instance, given by the gradient of the signed distance function to $\pa D$) and setting $\Tilde{f}_\nu:= \langle \na f , \Tilde{\nu} \rangle $ and $\na \Tilde{f}_\nu = \na^2 f \Tilde{\nu} + D \Tilde{\nu} \na f$, we can compute that
\begin{equation*}
    \na_\tau f_\nu = \na \Tilde{f}_\nu - \langle \na \Tilde{f}_\nu  , \nu \rangle \nu 
    = \na^2 f \Tilde{\nu} + D_\tau \Tilde{\nu} \na_\tau f - f_{\Tilde{ \nu} \Tilde{ \nu}} \Tilde{ \nu} - \langle D \Tilde{\nu} \na f, \Tilde{\nu} \rangle \Tilde{\nu} \quad \text{on }\pa D,
\end{equation*}
from which, using that $D \Tilde{\nu} \Tilde{\nu} = \Orig$ (which easily follows by differentiating $ | \Tilde{\nu} |^2 \equiv 1 $), we obtain \eqref{eq:olA expression}.

Item $(iii)$ immediately follows from \eqref{Reilly?}.

Finally, items $(iv)$, $(v)$, and $(vi)$ follow from $(i)$, $(ii)$, and $(iii)$ respectively, applied to the restrictions of $\restr{\sg u}{\ol D\cap U}$ and $\restr{\sg u}{U\setminus D}$, by computing the jumps with the aid of the transmission condition $\jump{\sg u_\nu}= 0$ on $\pa D$.
\end{proof}

The following integral identity will be useful.
\begin{lemma}[Fundamental integral identity]\label{lem:fundamental identity having u explicit in D}
Let $u$ satisfy \eqref{eq:Dirichlet problem} and 
\begin{equation}\label{eq:forma di u in D}
	u(x) = \frac{|x|^2 - \la^2}{2 \sg_c } \quad \text{ in } D ,
\end{equation}
for some $\la>0$.
Then, for any $\xi \in \RR^N$, we have the following fundamental identity:
\begin{equation}\label{eq:fundamental identity with I II III}
\int_{\Om\setminus\ol{D}} (-u) \left\lbrace |\na^2 u|^2 - \frac{(\De u)^2}{N} \right\rbrace dx
= I + II + III ,
\end{equation}
where $|\na^2 u|$ denotes the Frobenius norm of the Hessian matrix of $u$ and 
\begin{equation}\label{eq:def I}
	I :=	\frac{1}{2} \int_{\pa\Om} u_\nu^2 \left[ u_\nu - \langle x - \xi , \nu \rangle \right] dS_x ,
\end{equation}	
\begin{equation}\label{eq: def II}
	\begin{split}
	II := \frac{1}{\sg_c} \left( \frac{1}{\sg_c} - 1 \right) 
	& \int_{\pa D} \Biggl\{
	\langle x , \nu \rangle \left( \langle x , \nu \rangle^2 - |x|^2 \right) 
	\Biggr.
	\\
	& 
	\Biggl. +
	\frac{\la^2 - |x|^2 }{2} \left[  \frac{\langle D_\tau \nu \,  x_\tau, x_\tau \rangle }{\sg_c} + (N-1) \left(1-H \langle x,\nu \rangle \right) \langle x,\nu \rangle \right] 
	 \Biggr\} dS_x ,
	\end{split}
\end{equation}	
\begin{equation}\label{eq:def III}
\begin{split}
	III:= 
    \int_{\pa D} \Biggl\{ 
    N u  \langle \xi , \nu \rangle
    &
    + \frac{ \langle \xi , \nu \rangle }{2}  \left[ \left( 1 -\frac{1}{\sg_c^2} \right) \langle x , \nu \rangle^2 +\frac{|x|^2}{\sg_c^2}  \right] 
	\Biggr.
    \\
    & -
	\Biggl.
    \left( 1 - \frac{1}{\sg_c} \right) \langle x , \nu \rangle^2  \langle \xi , \nu \rangle 
    -
    \frac{  \langle x , \nu \rangle \langle \xi , x \rangle }{\sg_c} 
	\Biggr\} dS_x .
 \end{split}
\end{equation}
\end{lemma}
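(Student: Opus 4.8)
The plan is to derive \eqref{eq:fundamental identity with I II III} by applying a Rellich--Pohozaev-type integral identity (the kind used by Weinberger and its refinements) to the harmonic-type function $u$ on the annular region $\Om\setminus\ol D$, and then carefully bookkeeping the boundary terms on $\pa\Om$ and on $\pa D$. The starting point is the classical $P$-function identity: for $u$ solving $\De u = N$ in $\Om\setminus\ol D$, one has the pointwise identity
\begin{equation*}
\dv\!\left( \tfrac12 |\na u|^2 (x-\xi) - \langle \na u, x-\xi\rangle \na u + \tfrac{N-2}{2} u \na u \right) = |\na^2 u|^2 - \tfrac{(\De u)^2}{N} + (\text{terms that vanish since }\De u = N),
\end{equation*}
or a variant thereof tailored so that the divergence of a suitable vector field $V = V(u,x,\xi)$ equals $|\na^2u|^2 - (\De u)^2/N$ up to lower-order pieces. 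Multiplying by $(-u)$ (which is positive by Lemma~\ref{strong maximum pple}, or rather by the sign information available here) and integrating by parts over $\Om\setminus\ol D$ produces the left-hand side of \eqref{eq:fundamental identity with I II III} plus boundary integrals over $\pa\Om\cup\pa D$. The goal is to show those boundary integrals reorganize precisely into $I$, $II$, $III$.

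First I would fix the exact vector-field identity (the precise Weinberger-type $P$-function divergence identity), integrate it against $-u$ over $\Om\setminus\ol D$, and split the resulting boundary integral $\int_{\pa(\Om\setminus\ol D)} = \int_{\pa\Om} - \int_{\pa D}$ (orientations as induced by the exterior normal $\nu$). On $\pa\Om$ we have $u\equiv 0$ and $\na u = u_\nu\nu$, so every term containing $u$ as a factor drops out and one is left with exactly the expression $\tfrac12\int_{\pa\Om} u_\nu^2[u_\nu - \langle x-\xi,\nu\rangle]\,dS_x = I$; this part is short. The bulk of the work is on $\pa D$. Here I would use the explicit form \eqref{eq:forma di u in D} of $u$ in $D$ together with the transmission conditions $\jump{u}=0$, $\jump{\sg u_\nu}=0$ to express all traces of $u$, $\na u$, $\na^2 u$ from the $\Om\setminus\ol D$ side in terms of $x$, $\nu$, $H$, $D_\tau\nu$, and $\sg_c$. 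Concretely, from \eqref{eq:forma di u in D} the interior traces are $u|_{\pa D} = (|x|^2-\la^2)/(2\sg_c)$, $\gr u|_{\pa^-D} = x/\sg_c$, hence $u_\nu^- = \langle x,\nu\rangle/\sg_c$ and $\gr_\tau u = x_\tau/\sg_c$, and the exterior normal derivative is forced by the transmission condition to be $u_\nu^+ = \langle x,\nu\rangle$. The tangential derivatives of $u$ coincide from both sides since $\jump{u}=0$, so $\gr_\tau u = x_\tau/\sg_c$ on $\pa D$ also from the outside. For the second-order traces on the outside I would invoke Lemma~\ref{lem: jump}(iii)/(ii) — e.g. $u_{\nu\nu}^+ = N - \De_\tau u - (N-1)H u_\nu^+$, and $\De_\tau u$ computed from $\gr_\tau u = x_\tau/\sg_c$ via $\De_\tau u = \dv_\tau(x_\tau/\sg_c)$, which expands in terms of $\langle D_\tau\nu\, x_\tau,x_\tau\rangle$, $\langle x,\nu\rangle$, $H$, and $N-1$. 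Substituting all of these into the $\pa D$ boundary integrand and separating the terms according to whether they carry a factor $\xi$ or not, the $\xi$-independent part should collapse to $II$ and the $\xi$-linear part to $III$; this requires using $\langle x,\nu\rangle x_\tau = \langle x,\nu\rangle(x - \langle x,\nu\rangle\nu)$ and similar algebraic rearrangements, and matching the coefficient patterns $\sg_c^{-1}(\sg_c^{-1}-1)$, $(1-\sg_c^{-1})$, $(1-\sg_c^{-2})$ exactly.

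I expect the main obstacle to be the $\pa D$ computation: keeping track of the precise form of the vector field evaluated on $\pa D$ from the outside, correctly reducing every $\na^2 u$ contraction via the tangential-calculus identities of Lemma~\ref{lem: jump}, and organizing the resulting sum of many scalar terms so that it matches the stated $II + III$ term by term — in particular getting all the signs and the coefficients $1-H\langle x,\nu\rangle$, $\langle D_\tau\nu\, x_\tau,x_\tau\rangle$, etc., to land correctly. A secondary subtlety is checking that the lower-order / $u$-dependent pieces in the bulk divergence identity genuinely cancel because $\De u = N$ is constant (so that only $|\na^2 u|^2 - (\De u)^2/N$ survives as the bulk integrand), and confirming there is no leftover bulk term; this is where the choice of the exact Weinberger-type vector field matters and must be pinned down before the boundary analysis. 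Once the vector field is fixed, the whole argument is an (admittedly lengthy) exercise in the divergence theorem plus the tangential identities already proved in Lemma~\ref{lem: jump}, with no further conceptual input required.
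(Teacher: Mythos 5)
Your overall blueprint (a Weinberger-type integral argument, $I$ from $\pa\Om$ since $u\equiv 0$ there, the $\pa D$ terms via the explicit form of $u$ in $D$ plus the transmission conditions and the tangential-calculus identities of Lemma~\ref{lem: jump}) points in the right direction, and your discussion of the $\pa D$ bookkeeping is essentially what the paper does. But the mechanism you propose for producing the left-hand side $\int_{\Om\setminus\ol D}(-u)\{|\na^2 u|^2 - (\De u)^2/N\}\,dx$ does not work, and this is the central step.

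You claim there is a first-order vector field $V=V(u,\na u,x,\xi)$ with $\dv V = |\na^2 u|^2 - (\De u)^2/N$ (up to terms vanishing when $\De u=N$), and that multiplying by $-u$ and integrating by parts yields the LHS plus boundary terms. Neither claim holds. The quantity $|\na^2 u|^2$ involves the full squared Hessian and cannot be the divergence of any vector field built algebraically from $u$, $\na u$, and $x$; indeed, your candidate
$V = \tfrac12|\na u|^2(x-\xi) - \langle\na u,x-\xi\rangle\na u + \tfrac{N-2}{2}u\na u$
has, when $\De u = N$, divergence $(N-2)|\na u|^2 - N\langle\na u,x-\xi\rangle + \tfrac{N(N-2)}{2}u$, which contains no $\na^2 u$ at all — this is a Pohozaev identity, not a $P$-function identity. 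The quantity $|\na^2 u|^2 - (\De u)^2/N$ arises instead as $\De P$ for the second-order auxiliary $P=\tfrac12|\na u|^2 - u$ (using $\De u = N$ constant), so one must use Green's \emph{second} identity $\int(P\De u - u\De P) = \int_\pa(Pu_\nu - uP_\nu)$ rather than a single divergence theorem. Moreover, once you write $\int(-u)\De P$ this way, a residual bulk term $-\int P\De u = -N\int(\tfrac12|\na u|^2 - u)\,dx$ remains; that is a second, genuinely separate step, and it is precisely where a Pohozaev-type divergence identity (the one whose vector field resembles your $V$) enters, converting $\int|\na u|^2$ and $\int u$ into boundary integrals with the parameter $\xi$. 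Your proposal collapses these two distinct integrations by parts into one and mislabels which identity produces the Hessian term; as written it would leave an uncontrolled bulk integral $\int\langle V,\na u\rangle\,dx$. Fixing this requires: (1) take $P := \tfrac12|\na u|^2 - u$ and apply Green's second identity to $u$ and $P$ on $\Om\setminus\ol D$; (2) eliminate the resulting $-N\int P$ via a Pohozaev divergence identity depending on $\xi$; (3) then do the $\pa D$ trace computations essentially as you describe. Steps (1)–(2) are the missing ingredients.
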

\begin{proof}
Setting 
$$
P:= \frac{1}{2} |\na u|^2 - u
$$
and integrating by parts, we have that
\begin{equation*}
\int_{\Om \setminus \ol{D}} \left( P \De u - u \ \De P \right) dx = \int_{\pa\Om} \left( P u_\nu - u P_\nu \right) dS_x - \int_{\pa^+ D} \left( P u_\nu - u P_\nu \right) dS_x ,
\end{equation*}
where, as usual, on $\pa D$ and $\pa \Om$ we agree to denote by $\nu$ the unit normal exterior to $D$ and $\Om$. We also use the notation $\pa^+ D$ for the integral over $\pa^+ D$ whenever we need to emphasize that the values of (the derivatives of) $u$ over $\pa^+ D$ are those coming from $\Om \setminus \ol{D}$.
Using that $u=0$ on $\pa\Om$, the last formula easily leads to
\begin{equation}\label{eq:step 1 for Int Id}
    \int_{\Om \setminus \ol{D}} (- u) \De P  dx = - \int_{\Om \setminus \ol{D}}  P \De u \, dx + \int_{\pa\Om}  P u_\nu dS_x - \int_{\pa^+ D} \left( P u_\nu - u P_\nu \right) dS_x .
\end{equation}
Next, we are going to show that
\begin{multline}\label{eq:step 2 for Int Id}
    - \int_{\Om \setminus \ol{D}}  P \De u \, dx = 
    \int_{\pa^+ D} \left\lbrace \left[ \langle x - \xi , \na u  \rangle + (N-1) u  \right]  u_\nu  - 
 \left( \frac{|\na u|^2}{2} + N u \right) \langle x - \xi , \nu \rangle  \right\rbrace dS_x 
    \\
    - \frac{1}{2} \int_{\pa\Om} u_\nu^2 \langle x- \xi, \nu \rangle dS_x
    , 
\end{multline}
holds for any given $\xi \in \RR^N$.
The above equality follows from the divergence theorem, recalling that $\De u=N$ in $\Om \setminus \ol{D}$ and $u=0$ on $\pa\Om$, and using the following differential identities
\begin{equation*}
    \mathrm{div} (u \na u) = |\na u|^2 + ( \De u ) u ,
\end{equation*}
\begin{equation*}
    \mathrm{div} \left\lbrace \left( \langle x - \xi , \na u \rangle + N u \right) \na u - \left(  \frac{| \na u|^2}{2} + N u  \right) ( x - \xi )  \right\rbrace = \left( \frac{N}{2} + 1 \right) | \na u |^2 ,
\end{equation*}
which hold true in $\Om \setminus \ol{D}$.
Plugging \eqref{eq:step 2 for Int Id} into \eqref{eq:step 1 for Int Id} and using that
\begin{equation*}
P_\nu = \langle \na^2 u \na u , \nu \rangle - u_\nu \quad \text{ on } \pa^+ D ,
\end{equation*}
a direct computation gives that 
\begin{align*}
\int_{\Om \setminus \ol{D}} (- u) \De P  dx 
& = \frac{1}{2} \int_{\pa\Om} u_\nu^2 \left[ u_\nu - \langle x - \xi , \nu \rangle \right] dS_x
\\
& + 
\int_{\pa^+ D} \left[ u  \ \langle \na^2 u \na u , \nu \rangle - \frac{ | \na u |^2 }{2} u_\nu  \right] dS_x
\\
& +
\int_{\pa^+ D} \left\lbrace \left[ \langle x - \xi , \na u  \rangle + (N-1) u  \right]  u_\nu  - 
 \left( \frac{|\na u|^2}{2} + N u \right) \langle x - \xi , \nu \rangle  \right\rbrace dS_x .
\end{align*}
We now re-write the last formula using that
$$
\De P = | \na^2 u|^2 - \frac{(\De u)^2}{N} \quad \text{ in } \Om \setminus \ol{D} ,
$$   
which follows by direct computation, and that, by
\eqref{eq:forma di u in D} and the transmission conditions in \eqref{eq: transmission problem}, we have that
\begin{equation*}
	u(x) = \frac{|x|^2 - \la^2}{2 \sg_c}   \quad \text{ on\footnotemark  } \, \,  \pa D  ,
\end{equation*}
\begin{equation*}
	u_\nu (x) = \langle x,\nu \rangle \qquad \text{ on } \,  \pa^+ D ,
\end{equation*}	
\begin{equation*}
    \gr_\tau u(x)=\frac{x}{\sg_c}-\frac{\langle x,\nu\rangle}{\sg_c}\nu=\frac{x_\tau}{\sg_c} \qquad \text{ on } \,  \pa^+ D ,
\end{equation*}
\begin{equation*}
	\na u(x) = \frac{x}{\sg_c} - \frac{\langle x,\nu \rangle}{\sg_c} \nu + \langle x,\nu \rangle \nu   \quad \text{ on } \pa^+ D .
\end{equation*}
\footnotetext{Since this relation holds true on both $\pa^+ D$ and $\pa^- D$, we simply write $\pa D$.}
Hence, for any $\xi \in \RR^N$, we have that
\begin{equation*}
	\begin{split}
\int_{\Om\setminus\ol{D}} (-u) \left\lbrace |\na^2 u|^2 - \frac{(\De u)^2}{N} \right\rbrace dx
& =
\frac{1}{2} \int_{\pa\Om} u_\nu^2 \left[ u_\nu - \langle x - \xi , \nu \rangle \right] dS_x
\\
& + 
\int_{\pa^+ D} \Biggl\{  (-u) \left[ N \langle x - \xi , \nu \rangle - (N-1) \langle x , \nu \rangle - \langle \na^2 u \na u , \nu \rangle \right] 
\Biggr.
\\
& -
\frac{1}{2} \left( \langle x - \xi , \nu \rangle + \langle x , \nu \rangle  \right) \left[ \left( 1 -\frac{1}{\sg_c^2} \right) \langle x , \nu \rangle^2 +\frac{|x|^2}{\sg_c^2}  \right]
\\
& 
\Biggl. +
\langle x , \nu \rangle \left[ \left( 1 - \frac{1}{\sg_c} \right) \langle x - \xi , \nu \rangle  \langle x , \nu \rangle +\frac{\langle x - \xi , x \rangle }{\sg_c}  \right] \Biggr\} dS_x ,
	\end{split}
\end{equation*}
which can be easily rearranged, by simple computations that use that $\langle x - \xi , \nu \rangle = \langle x , \nu \rangle - \langle \xi , \nu \rangle$ to gather the terms (on $\pa D$) depending on $\xi$ in $III$ below, as follows:
\begin{equation*}
		\int_{\Om\setminus\ol{D}} (-u) \left\lbrace |\na^2 u|^2 - \frac{(\De u)^2}{N} \right\rbrace dx
		 =
		I + II + III ,
\end{equation*}
where we have set $I$ and $III$ as in \eqref{eq:def I} and \eqref{eq:def III}, and

\begin{equation*}
	II := \int_{\pa^+ D} \left\lbrace  (-u) \left[  \langle x , \nu \rangle - \langle \na^2 u \na u , \nu \rangle \right] 
	+
	\frac{1}{\sg_c} \left( \frac{1}{\sg_c} - 1 \right) \langle x , \nu \rangle \left( \langle x , \nu \rangle^2 - |x|^2 \right) \right\rbrace dS_x .
\end{equation*}	

We finally show that $II$ can be conveniently re-written as in \eqref{eq: def II}, again by exploiting the transmission conditions in \eqref{eq: transmission problem} and the fact that
\begin{equation*}
	u(x) = \frac{|x|^2 - \la^2}{2 \sg_c} \quad \text{ in } D .
\end{equation*}
More precisely, we are going to prove that
\begin{equation}\label{eq:proof second derivatives on pa D}
	\langle \na^2 u \na u, \nu \rangle = \langle x,\nu \rangle + \left( 1 - \frac{1}{\sg_c} \right) \left\lbrace \frac{ \langle D_\tau \nu \,  x_\tau, x_\tau \rangle }{\sg_c} + (N-1) \left(1-H \langle x,\nu \rangle \right) \langle x,\nu \rangle \right\rbrace ,
\end{equation}
where $ x_\tau = x - \langle x,\nu \rangle \nu $.
Once \eqref{eq:proof second derivatives on pa D} is proved, it is immediate to check that $II$ can be conveniently re-written as in \eqref{eq: def II}, which completes the proof.
Notice that, since we got rid of the presence of $\langle \na^2 u \na u , \nu \rangle$, the integral over $\pa^+D$ can be simply denoted by $\pa D$. 

 In order to prove \eqref{eq:proof second derivatives on pa D}, we recall that $\si := \sg_c \cX_D + \cX_{\Om \setminus D} $ and that $\jump{u} = \jump{\na_\tau u} = \jump{\De_\tau u}=0=\jump{\si u_\nu}$, where $\jump{\cdottone}$ denotes the ``jump'' across $\pa D$.
We start by computing that, on $\pa^+ D$:
\begin{equation}\label{eq:initial step to compute second der on partial D}
    \langle \na^2 u \na u, \nu \rangle 
    = \langle \na^2 u \na_\tau u, \nu \rangle + u_{\nu \nu} u_\nu 
    = \frac{ \langle \na^2 u \  x, \nu \rangle}{\sg_c} + u_{\nu \nu} \ \langle x, \nu \rangle \left( 1 - \frac{1}{\sg_c} \right).
\end{equation}
Recalling \eqref{eq:forma di u in D} and $(vi)$ of Lemma~\ref{lem: jump}, we obtain that
\begin{equation}\label{eq:extra for second der on partial D}
\jump{\si u_{\nu \nu}} 
= - \jump{\si} \De_\tau \left( \frac{|x|^2 - \la^2}{2 \si_c} \right)
= - \frac{\jump{\sg}}{\sg_c} \De_\tau \left(\frac{|x|^2}{2}\right) 
= - \frac{\jump{\sg}}{\sg_c} (N-1) \left( 1 - H \langle x, \nu \rangle  \right) ,
\end{equation}
from which we easily obtain that
\begin{equation}\label{eq:u_nunu on partialD}
  u_{\nu \nu} = 1+ (N-1) \left(  1 - \frac{1}{\sg_c}  \right)  \left( 1 - H \langle x, \nu \rangle  \right) \quad \text{ on } \pa^+ D .
\end{equation}
Next, we compute that
\begin{equation}\label{eq:second step for second der on partial D}
\begin{split}
    \jump{  \langle \si   \na^2 u \  x, \nu \rangle } 
    & = \jump{   \langle \si \na^2 u \  x_\tau , \nu \rangle } + \jump{   \langle \si \na^2 u \  \langle x, \nu \rangle \nu , \nu \rangle }
    \\
    & =  \langle \jump{   \si \na^2 u \ \nu  } , x_\tau \rangle +  \jump{    \si \  u_{\nu \nu}  } \  \langle x, \nu \rangle
    \\
    & = \langle \jump{   \si \na_\tau u_\nu  } , x_\tau \rangle - \langle \jump{ \si D_\tau \nu \  \na_\tau u } , x_\tau \rangle +  \jump{    \si \  u_{\nu \nu}  } \  \langle x, \nu \rangle
    \\
    & =  \langle \na_\tau\jump{   \si  u_\nu  } , x_\tau \rangle -  \jump{ \si } \langle D_\tau \nu \ \na_\tau u  , x_\tau \rangle - \frac{ \jump{ \si } }{ \si_c }   (N-1) \left( 1 - H \langle x, \nu \rangle \right) \  \langle x, \nu \rangle
    \\
    & = -\frac{ \jump{ \si } }{\si_c} \langle D_\tau \nu \ x_\tau  , x_\tau \rangle - \frac{ \jump{ \si } }{ \si_c }   (N-1) \left( 1 - H \langle x, \nu \rangle \right) \  \langle x, \nu \rangle .
\end{split}
\end{equation}
Here, the third equality follows by $(v)$ of Lemma~\ref{lem: jump}, the fourth equality follows by \eqref{eq:extra for second der on partial D}, whereas in the fifth equality we used that $\jump{\si u_\nu} = 0$ and $\na_\tau u = x_\tau / \si_c$.

Putting together \eqref{eq:initial step to compute second der on partial D}, \eqref{eq:u_nunu on partialD}, and \eqref{eq:second step for second der on partial D}, we obtain \eqref{eq:proof second derivatives on pa D} and complete the proof.
\end{proof}

The following theorem provides general necessary and sufficient conditions for the rigidity of Problem \eqref{eq:Dirichlet problem} under the condition \eqref{eq:internal overdetermination}, that is,
\begin{equation*}
    u = a_1 , \quad u_\nu= c_1 \quad \text{ on } \pa \om_1 , \quad \text{ where } \om_1 \subset\subset D .
\end{equation*}
\begin{theorem}\label{thm:symmetry for internal only}
    Let $u$ satisfy \eqref{eq:Dirichlet problem} and \eqref{eq:internal overdetermination}. Assume\footnote{\label{footnote translation}Such an assumption is always satisfied, up to a translation.} that the origin $\Orig$ coincides with the center of mass of $\om_1$. Then, the following items are equivalent:
\begin{enumerate}[(i)]
    \item $D$ and $\Om$ are concentric balls, and, up to a dilation and a translation, $u$ is of the form
    \begin{equation*}
        u(x)=
        \begin{cases}
        \frac{|x|^2 - \la^2}{2 \sg_c} \quad & \text{ in } D = B_1 (\Orig) ,
        \\
        \frac{|x|^2 - R^2}{2} \quad & \text{ in } \Om = B_R (\Orig) ,
        \end{cases}
    \end{equation*}
    where $R>1$ and $\la^2 = \sg_c R^2 +1 -\sg_c$.
    \item $D$, $\Om$, and $u_\nu$ on $\pa\Om$ are such that the following inequality is satisfied:
    \begin{equation*}
    \begin{split}
    \frac{1}{2} \int_{\pa\Om} u_\nu^2 \left[ u_\nu - \langle x , \nu \rangle \right] & dS_x 
    \\
    + 
    \left( \frac{1}{\sg_c} - 1 \right) 
	& \int_{\pa D} \Biggl\{
	\frac{ \langle x , \nu \rangle }{\sg_c} \left( \langle x , \nu \rangle^2 - |x|^2 \right) 
	\Biggr.
	\\
	& 
	\Biggl. 
    - u
    \left[  \frac{\langle D_\tau \nu \,  x_\tau, x_\tau \rangle }{\sg_c} + (N-1) \left(1-H \langle x,\nu \rangle \right) \langle x,\nu \rangle \right] 
	 \Biggr\} dS_x \le 0 .
    \end{split}
    \end{equation*}
    \end{enumerate}
\end{theorem}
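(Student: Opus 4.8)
The plan is to deduce Theorem~\ref{thm:symmetry for internal only} from the Fundamental Integral Identity of Lemma~\ref{lem:fundamental identity having u explicit in D} together with the classical Cauchy--Schwarz-type rigidity argument for the Hessian that goes back to Weinberger. First I would observe that, by Serrin's result applied to $\om_1\subset\subset D$ (exactly as in the preliminary discussion of section~\ref{sec:preliminary}, using the internal overdetermination \eqref{eq:internal overdetermination}), $\om_1$ must be a ball and $u$ is radial inside $\om_1$; since $u$ is real analytic in $D$ and $\sg_c\De u=N$ there, $u$ must in fact be the quadratic polynomial $u(x)=\frac{|x-p|^2-\la^2}{2\sg_c}$ throughout $D$ for some center $p$ and some $\la>0$. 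Because $\Orig$ is assumed to be the center of mass of $\om_1$, and the center of the ball $\om_1$ coincides with $p$, we get $p=\Orig$, so \eqref{eq:forma di u in D} holds and Lemma~\ref{lem:fundamental identity having u explicit in D} is applicable.

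Next, the left-hand side of \eqref{eq:fundamental identity with I II III} is manifestly nonnegative: $(-u)\ge 0$ in $\Om\setminus\ol D$ by Lemma~\ref{strong maximum pple} (applied to the phase $\Om\setminus\ol D$ where $u\le0$ with strict inequality in the interior), and $|\na^2 u|^2-\frac{(\De u)^2}{N}\ge 0$ pointwise by Cauchy--Schwarz (the Frobenius norm of a symmetric $N\times N$ matrix dominates $\frac{1}{\sqrt N}$ times its trace). Moreover equality holds in the integrand a.e. if and only if $\na^2 u$ is a scalar multiple of the identity at each point, i.e.\ $\na^2 u=\frac{\De u}{N}\id=\id$ in $\Om\setminus\ol D$, which forces $u(x)=\frac{|x-q|^2}{2}+c$ there; combined with the analyticity and the boundary condition $u=0$ on $\pa\Om$ this yields $\Om=B_R(q)$, and then the transmission conditions on $\pa D$ and the explicit form of $u$ in $D$ force $D=B_1(q)$ concentric with $\Om$ and $q=\Orig$, with the stated relation $\la^2=\sg_c R^2+1-\sg_c$. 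This is the implication ``equality case $\Rightarrow$ (i)''; the implication ``(i) $\Rightarrow$ everything'' is a direct verification on the explicit radial solution.

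It remains to connect the three boundary terms $I+II+III$ to the integral inequality in item $(ii)$. The key algebraic point is the choice $\xi=\Orig$: with $\xi=\Orig$ the term $III$ in \eqref{eq:def III} vanishes identically (every summand carries a factor $\langle\xi,\nu\rangle$ or $\langle\xi,x\rangle$), $I$ reduces to $\frac12\int_{\pa\Om}u_\nu^2[u_\nu-\langle x,\nu\rangle]\,dS_x$, and $II$ becomes, after substituting $\frac{\la^2-|x|^2}{2}=\sg_c\, u$ on $\pa D$ via \eqref{eq:forma di u in D}, exactly
\begin{equation*}
\left(\frac{1}{\sg_c}-1\right)\int_{\pa D}\left\{\frac{\langle x,\nu\rangle}{\sg_c}\left(\langle x,\nu\rangle^2-|x|^2\right)-u\left[\frac{\langle D_\tau\nu\,x_\tau,x_\tau\rangle}{\sg_c}+(N-1)(1-H\langle x,\nu\rangle)\langle x,\nu\rangle\right]\right\}dS_x.
\end{equation*}
Thus $I+II$ with $\xi=\Orig$ is precisely the left-hand side of the displayed inequality in $(ii)$. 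Since the left-hand side of \eqref{eq:fundamental identity with I II III} is $\ge 0$, we get that $(ii)$ (the inequality $\le 0$) holds if and only if the left-hand side of \eqref{eq:fundamental identity with I II III} is $=0$, which by the equality-case analysis of the previous paragraph is equivalent to $(i)$. This closes the equivalence.

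The main obstacle I anticipate is not conceptual but bookkeeping: one must verify carefully that $III$ vanishes for $\xi=\Orig$ and that the $\la^2-|x|^2$ factor in $II$ matches $2\sg_c u$ on $\pa D$ (using that \eqref{eq:forma di u in D} is valid up to $\pa D$), so that $I+II$ collapses exactly onto the asserted inequality with no residual terms. A secondary subtlety is making the equality-case rigidity rigorous across the interface: from $\na^2 u=\id$ in $\Om\setminus\ol D$ one gets $\Om$ is a ball centered at some $q$, but one still has to invoke the explicit polynomial form of $u$ in $D$ together with the transmission conditions $\jump{u}=\jump{\sg u_\nu}=0$ to conclude that $D$ is a concentric ball $B_1(q)$ and to pin down $q=\Orig$ (using the center-of-mass normalization) and the exact value of $\la$; this uses the same auxiliary-function/analyticity ideas already employed in Proposition~\ref{one phase proposition} and Lemma~\ref{star implies nonstar for symmetry}.
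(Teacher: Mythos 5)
Your proposal matches the paper's proof essentially step for step: you invoke Serrin's theorem in $\om_1$ together with analytic continuation to obtain \eqref{eq:forma di u in D}, apply Lemma~\ref{lem:fundamental identity having u explicit in D} with $\xi=\Orig$ (correctly observing that $III$ vanishes and that substituting $\frac{\la^2-|x|^2}{2}=-\sg_c u$ on $\pa D$ turns $I+II$ into exactly the left-hand side of $(ii)$), and then use the sign of $-u$ from Lemma~\ref{strong maximum pple} plus the Cauchy--Schwarz rigidity of $|\na^2 u|^2-(\De u)^2/N$ together with the transmission conditions to close the equivalence. The bookkeeping you flag as the main worry checks out, and the equality-case argument (quadratic form of $u$, $\Om$ a ball, transmission conditions forcing $D$ concentric and centered at $\Orig$) is the same one the paper uses.
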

\begin{proof}[Proof of Theorem~\ref{thm:symmetry for internal only}]
By the classical Serrin symmetry result in $\om_1$ and analytic continuation, we find that $u$ is of the form
\begin{equation*}
	u(x) = \frac{|x|^2 - \la^2}{2 \sg_c} \quad \text{ in } D ,
\end{equation*}
for some $\la>0$.
Hence, we are in a position to apply Lemma~\ref{lem:fundamental identity having u explicit in D}. We thus use \eqref{eq:fundamental identity with I II III} with $\xi=\Orig$ to find that
$$
\int_{\Om\setminus\ol{D}} (-u) \left\lbrace |\na^2 u|^2 - \frac{(\De u)^2}{N} \right\rbrace dx
=
\frac{1}{2} \int_{\pa\Om} u_\nu^2 \left[ u_\nu - \langle x , \nu \rangle \right] dS_x + II ,$$
where $II$ is as in \eqref{eq: def II}.
We now show that $(i)$ and $(ii)$ are equivalent. 

If $(i)$ holds true, then it is immediate to check that
$\left\{ | \na^2 u |^2 - \frac{ ( \De u )^2 }{N} \right\} \equiv 0$ in $\Om \setminus \ol{D}$ and hence $(ii)$ follows; in fact, $(ii)$ holds true with the equality sign.

On the other hand, if (ii) holds true, then we have that
$$
\int_{\Om\setminus\ol{D}} (-u) \left\lbrace |\na^2 u|^2 - \frac{(\De u)^2}{N} \right\rbrace dx = 0 
$$
and hence
$$
|\na^2 u|^2 - \frac{(\De u)^2}{N} \equiv 0 \text{ in } \Om \setminus \ol{D} ,
$$
being as $-u >0$ by the maximum principle (Lemma~\ref{strong maximum pple}), and
\begin{equation*}
|\na^2 u|^2 - \frac{(\De u)^2}{N} =  |\na^2 u|^2 - \frac{\langle \na^2 u, \identmatrix \rangle^2_{\RR^{N^2}}}{|\identmatrix|^2}   
\ge 0 
\end{equation*}
 by the Cauchy--Schwarz inequality\footnote{Obtained by regarding the matrices $\gr^2 u$ and the identity matrix $\identmatrix$ in $\RR^{N\times N}$ as vectors in $\RR^{N^2}$.}  in $\RR^{N^2}$. In particular, the Cauchy--Schwarz inequality holds with the equality sign, and hence (see, for instance, \cite[Lemma~1.9]{PogTesi} or \cite{MP2}) $u$ is a quadratic polynomial of the form
$$ 
\frac{|x-\eta|^2 - R^2}{2} \quad \text{ in } \Om \setminus \ol{D} ,
$$
for some $\eta \in \RR^N$ and $R>0$. The transmission condition of $u_\nu$ on $\pa D$ readily gives that
$$
\langle x,\nu \rangle = \langle x - \eta ,\nu \rangle \quad \text{ for any } x \in \pa D,
$$
and hence $\eta = \Orig $. 
Hence, $\Om$ is a ball of radius $R$ centered at the origin, i.e., $\Om = B_R(O)$. Moreover, the transmission condition of $u$ on $\pa D$ gives that 
$$
\frac{ |x|^2 - \la^2 }{2 \sg_c}  = \frac{|x|^2 - R^2}{2} \quad \text{ on } \pa D,
$$
that is, 
\begin{equation}\label{eq:STAR_D is a ball} 
|x|^2 = \frac{\sg_c}{\sg_c-1} \left( R^2 - \frac{\la^2}{\sg_c} \right) \quad \text{ for any } x \in \pa D .
\end{equation}
Hence, also $D$ is a ball centered at the origin.
Thus, $(i)$ follows, and the equivalence of $(i)$ and $(ii)$ is proved.
\end{proof}

We now focus on overdetermination of type $(1,1)^\star$, that is when, in addition to \eqref{eq:Dirichlet problem} and \eqref{eq:internal overdetermination}, $u$ also satisfies 
\begin{equation}\label{eq:external overdetermination}
u_\nu=c_2\quad \text{on } \pa\Om.
\end{equation}

In light of the discussion of section \ref{sec:preliminary}, the following result implies Theorem~\ref{(1,1) D=ball symmetry}. 
\begin{theorem}\label{thm:InandOut plus D ball sufficient condition}
Let $u$ satisfy \eqref{eq:Dirichlet problem} together with \eqref{eq:internal overdetermination} and \eqref{eq:external overdetermination}. If $D$ is a ball, then $\Om$ must be a concentric ball, and, up to a dilation and a translation, $u$ is of the form
    \begin{equation*}
        u(x)=
        \begin{cases}
        \frac{|x|^2 - \la^2}{2 \sg_c} \quad & \text{ in } D = B_1 (\Orig) ,
        \\
        \frac{|x|^2 - R^2}{2 } \quad & \text{ in } \Om = B_R (\Orig) ,
        \end{cases}
    \end{equation*}
    where $R>1$ and $\la^2 = \sg_c R^2 +1 -\sg_c$.
\end{theorem}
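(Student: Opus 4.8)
The plan is to combine Theorem~\ref{thm:symmetry for internal only} with the extra external overdetermination $u_\nu = c_2$ on $\pa\Om$, using the Pohozaev-type boundary term to force the inequality in item $(ii)$ of that theorem. First, up to a translation I place the origin at the center of mass of $\om_1$, and up to a dilation I normalize $D$ to be the unit ball $B_1(\Orig)$ (this is permissible since $D$ is assumed to be a ball, and rescaling preserves the structure of \eqref{eq:Dirichlet problem} after adjusting $\la$). By the classical Serrin result applied in $\om_1$ and analytic continuation, $u(x) = (|x|^2-\la^2)/(2\sg_c)$ in $D$ for some $\la>0$; hence Lemma~\ref{lem:fundamental identity having u explicit in D} applies with $\xi=\Orig$.

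Next, since $D = B_1(\Orig)$, on $\pa D$ we have $\nu = x$, $\langle x,\nu\rangle = 1$, $|x|^2 = 1$, $x_\tau = 0$, and $H \equiv 1$. I substitute these into the expression \eqref{eq: def II} for $II$: the term $\langle x,\nu\rangle(\langle x,\nu\rangle^2 - |x|^2)$ vanishes, the term $\langle D_\tau\nu\, x_\tau, x_\tau\rangle/\sg_c$ vanishes, and $(N-1)(1-H\langle x,\nu\rangle)\langle x,\nu\rangle$ vanishes as well, so $II = 0$. Therefore the fundamental identity reduces to
\begin{equation*}
\int_{\Om\setminus\ol D} (-u)\left\{|\na^2 u|^2 - \frac{(\De u)^2}{N}\right\} dx = \frac{1}{2}\int_{\pa\Om} u_\nu^2\left[u_\nu - \langle x,\nu\rangle\right] dS_x =: I.
\end{equation*}
Now I bring in the external overdetermination: on $\pa\Om$, $u_\nu \equiv c_2$ is constant, and from the weak/strong maximum principle (Lemma~\ref{strong maximum pple}) together with Hopf's lemma, $c_2 > 0$. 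The quantity $I$ then becomes $\tfrac{c_2^2}{2}\int_{\pa\Om}(c_2 - \langle x,\nu\rangle)\,dS_x = \tfrac{c_2^2}{2}\big(c_2 |\pa\Om| - N|\Om|\big)$ using the divergence theorem $\int_{\pa\Om}\langle x,\nu\rangle\,dS_x = N|\Om|$. On the other hand, integrating the PDE $\De u = N$ over $\Om\setminus\ol D$ and over $D$ and using the transmission condition $\jump{\sg u_\nu}=0$ yields a global identity $c_2|\pa\Om| = N|\Om|$ (the total flux of $\sg\gr u$ through $\pa\Om$ equals $N$ times the volume, and the interior interface contributes nothing by the transmission condition). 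Hence $I = 0$.

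With $I=0$ and $II=0$, the identity forces $\int_{\Om\setminus\ol D}(-u)\{|\na^2 u|^2 - (\De u)^2/N\}\,dx = 0$; since $-u>0$ by Lemma~\ref{strong maximum pple} and the integrand's bracket is nonnegative by Cauchy--Schwarz applied to $\na^2 u$ versus the identity matrix, we conclude $|\na^2 u|^2 = (\De u)^2/N$ pointwise in $\Om\setminus\ol D$, i.e.\ $\na^2 u$ is a constant multiple of the identity, so $u(x) = (|x-\eta|^2 - R^2)/2$ there. Exactly as in the proof of Theorem~\ref{thm:symmetry for internal only}, the transmission condition on $u_\nu$ across $\pa D$ gives $\eta = \Orig$, so $\Om = B_R(\Orig)$; then the transmission condition on $u$ across $\pa D = \pa B_1$ gives $(1-\la^2)/(2\sg_c) = (1-R^2)/2$, i.e.\ $\la^2 = \sg_c R^2 + 1 - \sg_c$, and $R>1$ follows since $D\subset\subset\Om$. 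This is the claimed conclusion. The only genuinely delicate point is verifying that the flux identity $c_2|\pa\Om| = N|\Om|$ holds in this two-phase setting — but this is immediate from testing \eqref{eq: weak form} with a function that is $\equiv 1$ near $\pa\Om$, or simply from the divergence theorem applied piecewise together with $\jump{\sg u_\nu}=0$ on $\pa D$; everything else is the substitution $\nu = x$ on the unit sphere and the already-established rigidity machinery.
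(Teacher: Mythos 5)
There is a genuine gap at the very first step. You write ``up to a translation I place the origin at the center of mass of $\om_1$, and up to a dilation I normalize $D$ to be the unit ball $B_1(\Orig)$,'' but these are two \emph{independent} normalizations that cannot both be imposed: once the origin is pinned at the center of $\om_1$ (which is required so that Serrin's theorem and analytic continuation give $u=(|x|^2-\la^2)/(2\sg_c)$ in $D$ \emph{with respect to that origin}, as demanded by Lemma~\ref{lem:fundamental identity having u explicit in D}), all you know is that $D=B_1(z)$ for some $z\in\rn$. The claim $z=\Orig$ is precisely what must be proved, and your proof silently assumes it. Everything downstream depends on this: the identities $\nu=x$, $\langle x,\nu\rangle=1$, $|x|^2=1$, $x_\tau=0$ on $\pa D$ are all false when $z\ne\Orig$, and the conclusion $II=0$ is wrong. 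Indeed the paper's alternative proof computes, for $D=B_1(z)$,
\begin{equation*}
II= \frac{1}{\sg_c} \left( \frac{1}{\sg_c} - 1 \right)^2 (N-1) \, \frac{\la^2 - |z|^2 - 1 }{2} \, |B_1| |z|^2 ,
\end{equation*}
which vanishes only if $z=\Orig$. There is also a conceptual tell: if one could legitimately take $D=B_1(\Orig)$, then $u$ would already be constant on $\pa D$, making $\pa D$ an overdetermined level set, and the whole conclusion would follow directly from Proposition~\ref{prop:special case pa D level set} with no need for the integral identity or the external overdetermination at all. The fact that your reduction renders the machinery superfluous is a signal that the reduction is too strong.

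By contrast, the role of the free vector $\xi$ in Lemma~\ref{lem:fundamental identity having u explicit in D} is exactly to handle the possible offset $z$. The paper exploits it in two ways: (1) since the left-hand side and $II$ are $\xi$-independent, one must have $\na_\xi III=\Orig$; computing $\na_\xi III=\left(\tfrac{1}{\sg_c}-1\right)z|B_1|$ forces $z=\Orig$, after which $\pa D$ is a level set and Proposition~\ref{prop:special case pa D level set} applies. (2) Alternatively, computing $II$ and $III$ explicitly for $D=B_1(z)$ and choosing $\xi=\mu z$ with a suitable $\mu$ makes $II+III=0$, from which rigidity and then $z=\Orig$ follow. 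Your part about $I=0$ (flux balance $c_2|\pa\Om|=N|\Om|$ via the transmission condition, and $\int_{\pa\Om}\nu\,dS_x=0$) is correct, and so is the Cauchy--Schwarz rigidity at the end, but without addressing the offset $z$ the argument does not establish the theorem.
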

\begin{proof}
Without loss of generality, up to a dilation, we can assume $D$ to be a ball of radius $1$, and, up to a translation, we can fix the origin $\Orig$ in the center of mass of $\pa\om_1$. In this way, we have that $D=B_1(z)$ for some $z \in \RR^N$, and $u$ is of the form \eqref{eq:forma di u in D}, that is,
\begin{equation*}
	u(x) = \frac{|x|^2 - \la^2}{2 \sg_c} \quad \text{ in } D ,
\end{equation*}
for some $\la>0$.

Thus, Lemma~\ref{lem:fundamental identity having u explicit in D} applies and \eqref{eq:fundamental identity with I II III} holds true.
Notice that,
by \eqref{eq:external overdetermination}, the divergence theorem, and the fact that $\dv(\sg \gr u) =N$ in $\Om$, we have that $$
I := \frac{1}{2} \int_{\pa\Om} u_\nu^2 \left[ u_\nu - \langle x - \xi , \nu \rangle \right] dS_x
= \frac{c_2^2}{2} \int_{\Om} \left[ \dv(\sg \gr u) - \dv( x - \xi ) \right] dx
= 0,
$$
regardless of the choice of $\xi \in \RR^N$. Hence, \eqref{eq:fundamental identity with I II III} reduces to 
\begin{equation}\label{eq:new integral identity senza I}
\int_{\Om\setminus\ol{D}} (-u) \left\lbrace |\na^2 u|^2 - \frac{(\De u)^2}{N} \right\rbrace dx
= II + III , 
\end{equation}
where $II$ and $III$ are those defined in \eqref{eq: def II} and \eqref{eq:def III}. 

Since the left-hand side and $II$ do not depend on $\xi$, we must have that
$$\na_\xi III = \Orig ,$$
that is, by direct computation,
\begin{equation*}
    \Orig = \na_\xi III  = 
    \int_{\pa D} \Biggl\{ 
    N u \, \nu 
    +  \left[ \left( \frac{1}{\sg_c} - \frac{1}{2} -\frac{1}{2 \sg_c^2}
    \right) \langle x , \nu \rangle^2 +\frac{|x|^2}{2 \sg_c^2}  \right] \, \nu
    -
    \frac{  \langle x , \nu \rangle  }{\sg_c} \, x 
	\Biggr\} dS_x .
\end{equation*}
By the divergence theorem $\int_{\pa D} u \, \nu \,dS_x =  \int_{D} \na u \, dx$ and using that $\na u = x / \sg_c$ n $D$ (by \eqref{eq:forma di u in D}), we thus get that
$$
\int_{\pa D} \left\{
    N u  +  \frac{|x|^2}{2 \sg_c^2} \right\}\nu\, dS_x 
= \left( \frac{N}{\sg_c} + \frac{1}{\sg_c^2} \right) \int_D x \,dx
$$
and hence, the formula above can be re-written as follows:
\begin{equation}\label{eq:intermedia}
    \Orig = \na_\xi III 
    = 
    \left( \frac{N}{\sg_c} + \frac{1}{\sg_c^2} \right) \int_D x \,dx 
    \\ +
    \int_{\pa D} \Biggl\{ \left( \frac{1}{\sg_c} - \frac{1}{2} -\frac{1}{2 \sg_c^2}
    \right) \langle x , \nu \rangle^2   \, \nu
    -
    \frac{  \langle x , \nu \rangle  }{\sg_c} \, x 
	\Biggr\} dS_x .
\end{equation}
We stress that we have not used yet that $D$ is a ball. We now use that
$D=B_1(z)$, and hence that $\nu = x-z$, and we compute that
\begin{equation*}
    \int_D x \,dx = z |B_1| ,
\end{equation*}
\begin{equation*}
\begin{split}
    \int_{\pa D} \langle x , \nu \rangle^2 \, \nu \, dS_x 
    & = \int_{\pa D} \left( 1 + \langle z , x-z \rangle^2 + 2 \langle z , x-z \rangle   \right) \, \nu \, dS_x 
    \\
    & = 2 z \, \int_D  ( \langle z , x-z \rangle + 1 )  dx 
    \\
    & = 2 z |B_1| ,
    \end{split}
\end{equation*}
\begin{equation*}
\begin{split}
   \int_{\pa D}   \langle x , \nu \rangle   \, x \, dS_x 
   & = \int_{\pa D}   \langle x , \nu \rangle   \, (x-z) \, dS_x + z \int_{\pa D}   \langle x , \nu \rangle  dS_x 
   \\
   & = \int_{\pa D}   \langle x , x-z \rangle   \, \nu \, dS_x + N z |B_1| 
   \\
   & = \int_{D} (2x-z) dx + N z |B_1| 
   \\
   & = (N+1) z |B_1| .
   \end{split}
\end{equation*}
In the above formulas, we used the divergence theorem and that $\int_D  \langle z , x-z \rangle dx=0$ by symmetry. Plugging these formulas in \eqref{eq:intermedia} easily leads to
$$
\Orig = \na_\xi III = \left( \frac{1}{\sg_c} - 1 \right) z |B_1| ,
$$
from which we deduce that we must have 
$ z=\Orig $; hence, we have that $u$ is constant on $\pa B_1 ( \Orig ) = \pa D$, and the symmetry result immediately follows by Proposition~\ref{prop:special case pa D level set} and Remark \ref{rem:special case pa D level set}.
\end{proof}
The following alternative proof is longer but shows that when $D$ is a ball, $II$ and $III$ can be explicitly computed.
\begin{proof}[Alternative proof of Theorem~\ref{thm:InandOut plus D ball sufficient condition}]
As in the previous proof, we arrive at \eqref{eq:new integral identity senza I}. We now explicitly compute $II$ and $III$.
Using that $D=B_1(z)$, we can directly check that, on $\pa D$:
\begin{equation*}
\nu = x-z ,
\end{equation*}
\begin{equation*}
H \equiv 1 ,
\end{equation*}
\begin{equation*}
	\langle x , \nu \rangle = 1 + \langle z , \nu \rangle ,
\end{equation*}
\begin{equation*}
\langle x , \nu \rangle^2 - |x|^2 = \langle z , \nu \rangle^2 - |z|^2 .
\end{equation*}
Noting, by direct computation, that
$\identmatrix - D_\tau \nu$ (where $\identmatrix$ denotes the identity matrix in $\RR^{N\times N}$) is the matrix whose $i$-th line is given by the vector $(x_i - z_i) (x-z)$, we easily compute that
$D_\tau \nu \,  x_\tau = x_\tau$,
and hence,
\begin{equation*}
\langle D_\tau \nu \,  x_\tau, x_\tau \rangle = \langle x_\tau, x_\tau \rangle = |x|^2 - \langle x,\nu \rangle^2 = |z|^2 - \langle z,\nu \rangle^2 .
\end{equation*}
Using the above information, tedious but easy computations give that (when $D=B_1(z)$) $II$ reduces to:
\begin{equation*}
	\begin{split}
II 
& = \frac{1}{\sg_c}
\left( \frac{1}{\sg_c} - 1 \right) 
\Biggl\{
- |z|^2 |\pa B_1| 
- \left( 1 + \frac{1}{\sg_c} \right) |z|^2 \int_{\pa D} \langle z , \nu \rangle dS_x 
\Biggr.
\\
& + N \int_{\pa D} \langle z , \nu \rangle^2 dS_x
+ \left( 2 - N - \frac{1}{\sg_c} \right) \int_{\pa D} \langle z , \nu \rangle^3 dS_x
\\
& + 
\frac{\la^2 - |z|^2 - 1 }{2} 
\left[ \frac{|z|^2}{\sg_c} |\pa B_1| 
- (N-1) \int_{\pa D} \langle z , \nu \rangle dS_x
- \left( N - 1 + \frac{1}{\sg_c} \right) \int_{\pa D} \langle z , \nu \rangle^2 dS_x .
 \right] 
 \Biggr\}
 \end{split}
\end{equation*}
Hence, using that, by symmetry, 
$$
\int_{\pa D} \langle z , \nu \rangle dS_x = 0 = \int_{\pa D} \langle z , \nu \rangle^3 dS_x ,
$$
and, by the divergence theorem,
$$
\int_{\pa D} \langle z , \nu \rangle^2 dS_x = \int_{\pa D} \langle \langle z, x-z \rangle z , \nu \rangle dS_x = \int_D \dv(\langle z, x-z \rangle z) dx = |z|^2 |B_1| ,
$$
by recalling that 
\begin{equation}\label{eq:Ball_volume_surface}
|\pa B_1| = N|B_1|
\end{equation}
we finally get that
\begin{equation}\label{eq:II}
	II= \frac{1}{\sg_c} \left( \frac{1}{\sg_c} - 1 \right)^2 (N-1) \, \frac{\la^2 - |z|^2 - 1 }{2} \, |B_1| |z|^2 .
\end{equation}

We are left to compute $III$ (using that $D=B_1(z)$). Writing $u$ as
$$
u= \frac{ \langle z, \nu \rangle }{\sg_c} - \frac{\la^2 - |z|^2 -1}{ 2 \sg_c}  \quad \text{on }\pa D,
$$
using (tedious but easy) manipulations similar to those used to compute $II$, and noting that, by symmetry,
$$\int_{\pa D} \langle \xi , \nu \rangle dS_x = 0 = \int_{\pa D} \langle \xi , \nu \rangle^3 dS_x ,$$
and, by the divergence theorem,
$$\int_{\pa D} \langle z , \nu \rangle \langle \xi , \nu \rangle dS_x = \int_{\pa D} \langle \langle z, x-z \rangle \xi , \nu \rangle dS_x = \int_D \dv(\langle z, x-z \rangle \xi) dx = \langle z, \xi \rangle |B_1| ,$$
we find that
\begin{equation}\label{eq:explicit III in case D ball}
III = \left( \frac{1}{\sg_c} - 1 \right) \langle z, \xi \rangle |B_1| .
\end{equation}

Choosing $\xi = \mu z $,
with
$$
\mu:= \frac{1}{\sg_c} \left( 1 -\frac{1}{\sg_c} \right) (N-1) \frac{\la^2 - |z|^2 - 1 }{2} ,
$$
gives that $II+III =0$; hence, \eqref{eq:new integral identity senza I} gives that
$$
\int_{\Om\setminus\ol{D}} (-u) \left\lbrace |\na^2 u|^2 - \frac{(\De u)^2}{N} \right\rbrace dx = 0 ,
$$
and we can reason as in the proof of Theorem~\ref{thm:symmetry for internal only} to get that \eqref{eq:STAR_D is a ball} holds true.
Since $D=B_1(z)$, 
we must have $z=\Orig$, $D=B_1( \Orig )$, and 
$\la^2 = \sg_c R^2 + 1 - \sg_c$.
\end{proof}
\begin{remark}
{\rm As a sanity check, it is immediate to compute from \eqref{eq:explicit III in case D ball} that $\na_\xi III = \left( \frac{1}{\sg_c} - 1 \right)  z |B_1|$, which agrees with the value obtained in the first of the two proofs.}
\end{remark}
Theorem~\ref{(1,1) D=ball symmetry} now easily follows from Theorem~\ref{thm:InandOut plus D ball sufficient condition} in light of the discussion of section \ref{sec:preliminary}.
\begin{proof}[Proof of Theorem~\ref{(1,1) D=ball symmetry}]
 As also remarked in section \ref{sec:preliminary}, the interior regularity of $u$ guarantees that $\pa \om_2$ is an analytic surface contained in $\Om\setminus\ol D$. Thus, Theorem~\ref{thm:InandOut plus D ball sufficient condition} applied to $(D,\om_2)$ yields that $(D,\om_2)$ are concentric balls and $u$ is radial up to $\ol\om_2$. By the analyticity of $u$ in $\Om\setminus\ol D$, and the fact that $\pa \Om$ is made of regular points for the Dirichlet Laplacian, we obtain that $u$ is radial in the whole $\ol\Om$ and $\pa\Om$, being a level set, must be a sphere concentric with $D$. This concludes the proof.      
\end{proof}

\section{Counterexamples for overdetermination of type $(1,1)^\star$}\label{sec:counter(1,1)}
We notice that the solvability of \eqref{eq:Dirichlet problem} under overdetermination of type $(1,1)^\star$ is equivalent to that of the following overdetermined problem in an annular domain $\Om\setminus\ol D$:
\begin{equation}\label{new odp in annular config}
   \begin{cases}
       \De u= N\quad \tin \Om\setminus\ol D,\\
       u=\frac{|x|^2-T}{2\sg_c}\quad \ton\pa D,\\
       u=0 \quad \ton \pa\Om,\\
   \end{cases} 
\end{equation}
for some real constant $T$, with overdetermined conditions 
\begin{equation}\label{new odc}
u_\nu= \langle x, \nu\rangle \quad \ton \pa D, \quad u_\nu= \text{const.} \quad \ton \pa\Om,
\end{equation}
where $\nu$ denotes the outer unit normal to $D$ at $\partial D$ and to $\Om$ at $\partial \Om$, respectively.

In what follows, we will find a nontrivial pair $(D,\Om)$ such that the solution to \eqref{new odp in annular config} also satisfies \eqref{new odc}.
\subsection{Preliminary result: a general bifurcation lemma}
Let $(D_0, \Om_0)$ be the pair of open balls of radii $R$ ($0<R<1$) and $1$ respectively centered at the origin. Moreover, let $Y_{k,i}$ denote the so-called \emph{spherical harmonics}, defined as the solutions to the following eigenvalue problem for the Laplace--Beltrami operator on the unit sphere
\begin{equation*}
-\De_\tau Y_{k,i}= \la_{k} Y_{k,i} \quad \ton \pa\Om_0,
\end{equation*}
where the eigenvalues are given by $\la_k=k(k+N-2)$, for $k\in \NN\cup\{0\}$, and the eigenfunctions are normalized such that $\norm{Y_{k,i}}_{L^2(\pa\Om_0)}=1$. Furthermore, the eigenspace $\cY_k$ corresponding to the $k$\textsuperscript{th} eigenvalue $\la_k$ has finite dimension $d_k$ and is spanned by $\left\{ Y_{k,1}, \dots, Y_{k,d_k}\right\}$.

The following Lemma gives sufficient conditions that ensure the existence of a nontrivial branch of solutions to an overdetermined problem near the trivial solution given by the spherical annulus.
\begin{lemma}[Bifurcation from an annular configuration]\label{lem bifurcation}
Let $(D_0,\Om_0)$ be defined as above. Let $P\subset \RR$ be an open set, and let $X_i$, $Y_i$ ($i=1,2$) be $O(N)$-invariant Banach spaces of real-valued functions defined on $\pa D_0$ for $i=1$ and on $\pa\Om_0$ for $i=2$. 
Here, we say that a space $W$ of functions defined on a sphere centered at the origin is $O(N)$-invariant if and only if $w\circ\ga\in W$ holds for all $w\in W$ and for all elements $\ga$ of the orthogonal group $O(N)$. 
Assume that the inclusions $X_i\subset Y_i$ hold with compact embeddings $\iota_i: X_i\hookrightarrow Y_i$ ($i=1,2$), let $X:=X_1\times X_2$, $Y:=Y_1\times Y_2$, and let $\iota^\pm$ denote the compact operators $(\eta,\xi)\mapsto (\pm\iota_1\eta,\iota_2\xi)$ between the product spaces $X\hookrightarrow Y$. 
Also, assume that for all $k\in \NN\cup\{0\}$ and  $i=1,\dots, d_k$, one has $Y_{k,i}(\ \cdottone/R)\in X_1$ and $Y_{k,i}(\cdottone)\in X_2$. 
Let 
\begin{equation*}
    F: X\times P\to Y
\end{equation*}
be a $C^\ell$ mapping ($3\le \ell\le \infty$). Assume that $F$ is $O(N)$-equivariant, that is, 
\begin{equation*}
   F(\eta\circ \ga, \xi\circ \ga, \rho) = F(\eta,\xi,\rho)\circ \ga
\end{equation*}
for all $(\eta,\xi)\in X$, $\rho\in P$ and $\ga\in O(N)$. Also, for the sake of notational simplicity, for all $\rho\in P$, let $L(\rho):X\to Y$ denote the partial Fr\'echet derivative 
\begin{equation*}
 X\ni (\eta,\xi)\mapsto L(\rho)[\eta,\xi]:= \pa_{X}F(0,0,\rho)[\eta,\xi].
\end{equation*}
Moreover, assume that the following hold for some pair $(\rho^\star,k^\star)\in P\times (\NN\cup\{0\})$:
\begin{enumerate}[(a)]
    \item $F(0,0,\rho)=(0,0)$ for all $\rho\in P$.
    \item There exists a real constant $\mu\in \RR$ such that at least one of the two maps 
    \begin{equation*}
        L(\rho^\star) + \mu \iota^\pm: X\to Y 
    \end{equation*}
    is a bounded bijection. 
    \item For $k\in \NN\cup\{0\}$, $i=1,\dots, d_k$, consider the 2-dimensional vector space
    \begin{equation*}
        X_{k,i}:=\setbld{\left( \be Y_{k,i}\left({\; \cdottone}/{R}\right), \ga Y_{k,i}(\cdottone)  \right)}{\be,\ga\in\RR}. 
    \end{equation*}
    Also, let $\psi_{k,i}:X_{k,i}\to\RR^2$ denote the linear isomorphism 
    \begin{equation*}
        \left( \be Y_{k,i}\left(\,{\cdottone} /{R}\right), \ga Y_{k,i}(\cdottone)  \right)\mapsto \begin{pmatrix}
            \be\\ \ga
        \end{pmatrix}.
    \end{equation*}  
    Under this notation, suppose that, for all $\rho\in P$, the restriction $\restr{L(\rho)}{X_{k,i}}$ maps $X_{k,i}$ into itself and that there exists a matrix-valued function $\cM:P\times \NN\cup \{0\}\to \RR^{2\times 2}$ such that 
    \begin{equation*}
\psi_{k,i}\left(L(\rho)[\eta,\xi]\right)=\cM(\rho, k)\psi_{k,i}(\eta,\xi), \quad \text{for all }(\eta,\xi)\in X_{k,i}. 
\end{equation*}
    \item For $k\in \NN\cup\{0\}$, $\det\cM(\rho^\star,k)=0$ if and only if $k=k^\star$. Moreover, $\cM(\rho^\star,k^\star)\ne 0$.
    \item $\det\pa_\rho\cM(\rho^\star,k^\star)\ne 0$.
\end{enumerate}
Then there exists a function $Y^{\star}\in\cY_{k^\star}$, two real constants $(\be,\ga)\ne (0,0)$ and a nontrivial branch of class $C^{\ell-2}$ 
\begin{equation*}
   (-\varepsilon,\varepsilon)\ni t\mapsto \left( \eta(t), \xi(t), \rho(t) \right)\in X_1\times X_2\times  P 
\end{equation*}
such that $\rho(0)=\rho^\star$, $\eta(0)=0$, $\xi(0)=0$,  $\eta'(0)=\be Y^{\star}(\,{\cdottone}/{R})$, $\xi'(0)=\ga Y^{\star}(\cdottone)$ and $F(\eta(t), \xi(t), \rho(t))=0$ for all $t\in (-\ve,\ve)$.
\end{lemma}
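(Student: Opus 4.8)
The plan is to break the rotational symmetry so that the kernel of the relevant linearization becomes one--dimensional, and then to apply the Crandall--Rabinowitz bifurcation theorem \cite{CR}. Fix a point $p\in\pa\Om_0$ and let $G\subset O(N)$, $G\cong O(N-1)$, be its stabilizer. Put $\widetilde X_i:=\{w\in X_i:w\circ\ga=w\ \text{for all}\ \ga\in G\}$, $\widetilde X:=\widetilde X_1\times\widetilde X_2$, and define $\widetilde Y_i$, $\widetilde Y$ in the same way. Since $F$ is $O(N)$-equivariant it is in particular $G$-equivariant, so it restricts to a $C^\ell$ map $\widetilde F:\widetilde X\times P\to\widetilde Y$, and $(a)$ still gives $\widetilde F(0,0,\rho)=(0,0)$. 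The gain of this reduction is that the $G$-invariant part of each spherical harmonic space $\cY_k$ is one--dimensional, spanned by the zonal harmonic $Z_k$ (a Gegenbauer polynomial of $\langle\cdottone,p\rangle$); without it, the kernel computed below would be the whole $d_{k^\star}$-dimensional eigenspace $\cY_{k^\star}$ and Crandall--Rabinowitz could not be applied.

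Next I would analyze $\widetilde L(\rho^\star):=\pa_{\widetilde X}\widetilde F(0,0,\rho^\star):\widetilde X\to\widetilde Y$, which is just the restriction of $L(\rho^\star)$. By $(b)$, one of the operators $L(\rho^\star)+\mu\iota^\pm$ is a bounded bijection $X\to Y$; being the sum of the $G$-equivariant maps $L(\rho^\star)$ and $\mu\iota^\pm$ it commutes with the $G$-action, and uniqueness of solutions then shows it restricts to a bounded bijection $\widetilde X\to\widetilde Y$. As $\iota^\pm$ is compact, $\widetilde L(\rho^\star)$ is a compact perturbation of an isomorphism, hence Fredholm of index zero. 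To compute $\ker\widetilde L(\rho^\star)=\ker L(\rho^\star)\cap\widetilde X$, note that $\ker L(\rho^\star)$ is finite--dimensional and $O(N)$-invariant, so it splits into $O(N)$-isotypic pieces, which in a space of functions on the spheres $\pa D_0,\pa\Om_0$ are necessarily sums of copies of the $\cY_k$ (the only irreducible $O(N)$-representations realized by such functions); by Schur's lemma together with $(c)$, $L(\rho^\star)$ acts on the degree-$k$ piece as $\mathrm{id}_{\cY_k}\otimes\cM(\rho^\star,k)$, so $(d)$ forces $\ker L(\rho^\star)=\cY_{k^\star}\otimes\ker\cM(\rho^\star,k^\star)$, the second factor being one--dimensional because $\cM(\rho^\star,k^\star)$ has rank exactly one. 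Intersecting with the $G$-invariants leaves only the zonal direction, so $\ker\widetilde L(\rho^\star)=\mathrm{span}\{v^\star\}$, where $v^\star:=\big(\be^\star Z_{k^\star}(\cdottone/R),\ \ga^\star Z_{k^\star}(\cdottone)\big)$ and $(\be^\star,\ga^\star)$ spans $\ker\cM(\rho^\star,k^\star)$. Since the index is zero, $\mathrm{ran}\,\widetilde L(\rho^\star)$ has codimension one.

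There remains the Crandall--Rabinowitz transversality condition $\pa_\rho\widetilde L(\rho^\star)[v^\star]\notin\mathrm{ran}\,\widetilde L(\rho^\star)$. By $(c)$, $L(\rho)$ maps the zonal block $\widetilde X_{k^\star}\cong\RR^2$ into itself for all $\rho$, so $\pa_\rho\widetilde L(\rho^\star)[v^\star]$ lies in $\widetilde X_{k^\star}$ and is identified with $\pa_\rho\cM(\rho^\star,k^\star)\,(\be^\star,\ga^\star)^T$, while $\mathrm{ran}\,\widetilde L(\rho^\star)\cap\widetilde X_{k^\star}=\mathrm{ran}\,\cM(\rho^\star,k^\star)$; thus transversality reduces to the two--dimensional statement $\pa_\rho\cM(\rho^\star,k^\star)(\be^\star,\ga^\star)^T\notin\mathrm{ran}\,\cM(\rho^\star,k^\star)$, which a short $2\times2$ computation identifies with $\tfrac{d}{d\rho}\det\cM(\rho,k^\star)\big|_{\rho=\rho^\star}\ne0$ and which is guaranteed by $(e)$ together with $(d)$. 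With the kernel simple, the range of codimension one, and transversality satisfied, Crandall--Rabinowitz applied to $\widetilde F$ at $(0,0,\rho^\star)$ yields a $C^{\ell-2}$ curve $t\mapsto(\eta(t),\xi(t),\rho(t))$ through $(0,0,\rho^\star)$, tangent to $\mathrm{span}\{v^\star\}$, with $\widetilde F(\eta(t),\xi(t),\rho(t))=0$, hence $F(\eta(t),\xi(t),\rho(t))=0$; taking $Y^\star:=Z_{k^\star}\in\cY_{k^\star}$ and $(\be,\ga):=(\be^\star,\ga^\star)$ gives precisely the branch in the statement, up to rescaling $t$.

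The step I expect to be the main obstacle is the verification that $\ker\widetilde L(\rho^\star)$ is exactly one--dimensional: this is where hypotheses $(b)$, $(c)$ and $(d)$ must all be used at once, in combination with the elementary $O(N)$-representation theory on spherical harmonics that diagonalizes $L(\rho^\star)$ into the $2\times2$ blocks $\cM(\rho^\star,k)$ and with the $O(N-1)$-reduction that cuts $\cY_{k^\star}$ down to a line. Once that block description is available, the remaining ingredients—the Fredholm property, the codimension count, and the transversality—are comparatively routine.
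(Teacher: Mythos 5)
Your overall strategy mirrors the paper's proof exactly: reduce to the fixed-point space of a stabilizer subgroup $G\cong O(N-1)$ so that each $\cY_k$ contributes a single zonal direction, then apply Crandall--Rabinowitz to the $G$-equivariant restriction. The kernel computation, the index-zero Fredholm property, and the codimension count are all essentially the same as the paper's; you are slightly slicker in places, replacing the paper's projections $\pi_{k,i}$ and explicit factorization $L^G(\rho^\star)=(\id-\mu K)(L^G(\rho^\star)+\mu\iota^+)$ followed by Brezis's Fredholm alternative by direct appeals to Schur's lemma and to the fact that a compact perturbation of an isomorphism has index zero, but the content is the same.

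The transversality step, however, contains a genuine gap. You correctly show that the Crandall--Rabinowitz condition $\pa_\rho\cM(\rho^\star,k^\star)(\be^\star,\ga^\star)^T\notin \im\cM(\rho^\star,k^\star)$ reduces, by a short $2\times2$ computation, to $\tfrac{d}{d\rho}\det\cM(\rho,k^\star)\big|_{\rho=\rho^\star}\ne 0$. But you then assert that this is ``guaranteed by $(e)$ together with $(d)$,'' and that inference is false: hypothesis $(e)$ reads $\det\pa_\rho\cM(\rho^\star,k^\star)\ne 0$, which is a \emph{different} quantity. As a counterexample, take
$\cM(\rho)=\left(\begin{smallmatrix}0&\rho\\-\rho&1\end{smallmatrix}\right)$
at $\rho^\star=0$: then $\cM(0)$ has rank $1$ (so $(d)$ holds), $\det\pa_\rho\cM\equiv 1\ne 0$ (so $(e)$ holds), yet $\det\cM(\rho)=\rho^2$ so that $\tfrac{d}{d\rho}\det\cM(0)=0$ and transversality fails. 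So your final step does not follow as written. It is worth noting that the paper's own verification of this step is also imprecise: it asserts that $\pa_\rho\cM(\rho^\star,k^\star)(\be,\ga)^T$ is a nonzero multiple of $(\be,\ga)$ and that this follows from $(e)$, and neither assertion is justified for a general (non-symmetric) $\cM$. In the concrete application of the lemma the correct simple-root condition $\tfrac{d}{dR}\det\cM(R,k)\ne 0$ at $R=R^\star(k)$ does hold, so the theorem stands, but the abstract lemma as you (and the paper) argue it does not close at this step; hypothesis $(e)$ should be replaced by the simple-root condition, or one must separately argue their equivalence in the situation at hand.
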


The proof of Lemma~\ref{lem bifurcation} relies on the following version of the Crandall--Rabinowitz bifurcation theorem (that is equivalent to the one stated in \cite{CR}). 

\begin{thm}[Crandall--Rabinowitz theorem]\label{Crandall--Rabinowitz theorem}
Let $X$, $Y$ be real Banach spaces and let $U\subset X$ and $P\subset \RR$ be open sets, such that $0\in U$.
Let $\Psi\in C^\ell(U\times P;Y)$ ($3\le\ell\le\infty$) and assume that there exist $\rho^\star\in P$ and $x^\star\in X$ such that 
\begin{enumerate}[(i)]
    \item $\Psi(0,\rho) = 0$ for all $\rho\in P$;
    \item ${\rm Ker}\ \pa_x \Psi(0,\rho^\star)$ is a 1-dimensional subspace of $X$, spanned by $x^\star$; 
    \item ${\rm Im}\ \pa_x \Psi(0,\rho^\star)$ is a closed co-dimension 1 subspace of $Y$;
    \item  $\pa_\rho\pa_x \Psi(0,\rho^\star)[x^\star]\notin {\rm Im}\ \pa_x \Psi(0,\rho^\star)$.
    \end{enumerate}
Then $(0, \rho^\star)$ is a bifurcation point of the equation $\Psi(x,\rho)=0$ in the following sense. 
In a neighborhood of $(0, \rho^\star)\in X\times P$, the set of solutions of $\Psi(x,\rho) = 0$ consists of two $C^{\ell-2}$-smooth curves $\Gamma_1$ and $\Gamma_2$ which intersect only at the point $(0,\rho^\star)$. $\Gamma_1$ is the curve $\{(0,\rho)\,:\,\rho\in P\}$ and $\Gamma_2$ can be parametrized as follows, for small $\ve>0$: 
\begin{equation*}
    (-\ve,\ve)\ni t \mapsto\left(x(t),\rho(t)\right)\in U\times P,\text{ such that }\left(x(0),\rho(0)\right)=(0,\rho^\star), \quad x'(0)=x^\star. 
\end{equation*}
\end{thm}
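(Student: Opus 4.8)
\medskip
\noindent\textbf{Proof proposal (Lyapunov--Schmidt reduction).} The plan is to reduce $\Psi(x,\rho)=0$ near $(0,\rho^\star)$ to a single scalar equation and then apply the implicit function theorem twice. Set $L:=\pa_x\Psi(0,\rho^\star)\in\cL(X,Y)$. Since $\ker L=\spn\{x^\star\}$ is finite-dimensional (by $(ii)$) it admits a closed complement $X_0$, so $X=\spn\{x^\star\}\oplus X_0$; and since $\im L$ is closed of codimension $1$ (by $(iii)$) there are a bounded projection $Q:Y\to\im L$, a one-dimensional range $Z:=(\id-Q)Y$, and a functional $\ell_0\in Y^*$ with $\ker\ell_0=\im L$ and $\ell_0\not\equiv0$ on $Z$. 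The structural fact to record first is that $L|_{X_0}:X_0\to\im L$ is a bounded linear bijection, hence a topological isomorphism by the open mapping theorem (using that $\im L$, being closed, is a Banach space).

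I would then solve the ``auxiliary equation''. Writing $x=s\,x^\star+w$ with $s\in\RR$, $w\in X_0$, the equation $\Psi(x,\rho)=0$ is equivalent to the pair $Q\,\Psi(sx^\star+w,\rho)=0$ and $(\id-Q)\,\Psi(sx^\star+w,\rho)=0$. The map $G(s,w,\rho):=Q\,\Psi(sx^\star+w,\rho)$ is $C^\ell$, vanishes at $(0,0,\rho^\star)$, and satisfies $\pa_w G(0,0,\rho^\star)=L|_{X_0}$, invertible by the previous step; hence the implicit function theorem gives a locally unique $C^\ell$ solution $w=w(s,\rho)$ with $w(0,\rho^\star)=0$. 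I would immediately extract from $(i)$ and local uniqueness the three facts that drive the rest of the argument: $w(0,\rho)\equiv0$, whence $\pa_\rho w(0,\rho^\star)=0$; and $\pa_s w(0,\rho^\star)=0$, obtained by differentiating $G(s,w(s,\rho),\rho)\equiv0$ in $s$ at $(0,\rho^\star)$ and using $Lx^\star=0$ together with the injectivity of $L|_{X_0}$.

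Substituting $w=w(s,\rho)$ into the remaining ``bifurcation equation'' and applying $\ell_0$ produces a scalar $C^\ell$ function $\Phi(s,\rho):=\ell_0\big(\Psi(sx^\star+w(s,\rho),\rho)\big)$ whose zeros parametrize, locally, exactly the solutions of $\Psi(x,\rho)=0$. Because $\Phi(0,\rho)\equiv0$, one factors $\Phi(s,\rho)=s\,\widetilde\Phi(s,\rho)$ with $\widetilde\Phi(s,\rho)=\int_0^1\pa_s\Phi(ts,\rho)\,dt$ of class $C^{\ell-1}$, so the nontrivial solutions are the zeros of $\widetilde\Phi$. A short computation using $Lx^\star=0$ and $\ell_0\circ L=0$ gives $\widetilde\Phi(0,\rho^\star)=\pa_s\Phi(0,\rho^\star)=0$; differentiating $\pa_s\Phi(0,\rho)=\ell_0\big(\pa_x\Psi(0,\rho)[x^\star+\pa_s w(0,\rho)]\big)$ in $\rho$ at $\rho^\star$ and discarding the terms that vanish because $\pa_s w(0,\rho^\star)=0$ and because $L(\cdottone)\in\im L=\ker\ell_0$ yields
\begin{equation*}
\pa_\rho\widetilde\Phi(0,\rho^\star)=\ell_0\big(\pa_\rho\pa_x\Psi(0,\rho^\star)[x^\star]\big)\ne0,
\end{equation*}
precisely by the transversality hypothesis $(iv)$. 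A second application of the implicit function theorem, now to $\widetilde\Phi(s,\rho)=0$, gives a locally unique $C^{\ell-1}$ curve $s\mapsto\rho(s)$ with $\rho(0)=\rho^\star$; setting $x(s):=sx^\star+w(s,\rho(s))$ defines $\Gamma_2$, with $x(0)=0$ and $x'(0)=x^\star$ (since $\pa_s w(0,\rho^\star)=\pa_\rho w(0,\rho^\star)=0$). Finally I would note that every solution near $(0,\rho^\star)$ lies on $\Gamma_1:=\{(0,\rho)\}$ (the case $s=0$, where $x=w(0,\rho)=0$) or on $\Gamma_2$ (the case $\widetilde\Phi=0$, which forces $\rho=\rho(s)$ by uniqueness), and that $\Gamma_1\cap\Gamma_2=\{(0,\rho^\star)\}$ because $x^\star\notin X_0\ni w$ makes $s=0$ the only overlap; this yields the stated $C^{\ell-2}$-regular branches.

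The only genuinely delicate point I anticipate is the derivative bookkeeping for $\pa_\rho\widetilde\Phi(0,\rho^\star)$: once the identities $w(0,\rho)\equiv0$ and $\pa_s w(0,\rho^\star)=0$ from the auxiliary step are fed back, all contributions except $\pa_\rho\pa_x\Psi(0,\rho^\star)[x^\star]$ land in $\im L=\ker\ell_0$ and are annihilated, so that $(iv)$ is exactly the non-degeneracy condition making the bifurcating branch non-vertical and tangent to $x^\star$. The remaining ingredients — the open mapping theorem, the Banach-space implicit function theorem, and the smooth factorization $\Phi=s\widetilde\Phi$ — are entirely standard.
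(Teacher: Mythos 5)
Your Lyapunov--Schmidt reduction is correct and complete: the auxiliary-equation facts $w(0,\rho)\equiv 0$ and $\pa_s w(0,\rho^\star)=0$, the factorization $\Phi=s\widetilde\Phi$, and the transversality computation $\pa_\rho\widetilde\Phi(0,\rho^\star)=\ell_0\big(\pa_\rho\pa_x\Psi(0,\rho^\star)[x^\star]\big)\ne 0$ are exactly the classical argument of Crandall--Rabinowitz, which is what the paper relies on here, since it does not prove Theorem~A but quotes it from \cite{CR}. Note only that your bookkeeping actually yields $C^{\ell-1}$ branches, which is slightly stronger than (and of course implies) the $C^{\ell-2}$ regularity stated.
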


\begin{proof}[Proof of Lemma~\ref{lem bifurcation}]
We would like to apply Theorem~\ref{Crandall--Rabinowitz theorem} to the function $F$ but we cannot do this directly because $\dim \ker \pa_{X} F(0,0,0) \ne 1$ in general. To overcome this difficulty, for any subgroup $G \subset O(N)$ consider the following invariant subspaces:
\begin{equation*}
\begin{aligned}
 X^G:=\setbld{(\eta,\xi)\in X_1\times X_2}{\eta\circ \varphi=\eta,\quad \xi\circ \varphi=\xi, \quad\forall\varphi\in G}, \\    
 Y^G:=\setbld{(\eta,\xi)\in Y_1\times Y_2}{\eta\circ \varphi=\eta,\quad \xi\circ \varphi=\xi, \quad\forall\varphi\in G}.
\end{aligned}
 \end{equation*}
Moreover, consider the restriction $F^G$ of $F$ to $X^G\times P$. Since $F$ is $O(N)$-equivariant by hypothesis, $F^G$ is a well-defined function from $X^G\times P$ into $Y^G$. Moreover, 
\begin{equation*}
    L^G(\rho^\star):=\restr{L(\rho^\star)}{X^G}: X^G\to Y^G
\end{equation*}
is also a well-defined bounded linear mapping. 
It is known that for every $N\ge 2$ and $k^\star\ge 0$ there exists a subgroup $G\subset SO(N)$ such that the subset of $G$-invariant functions in $\cY_{k^\star}$ is a one-dimensional vector space spanned by some spherical harmonic $Y_{k^\star, i^\star}$, which we will simply call $Y^\star$. For instance, if $G:=O(N-1)\times I$, notice that the space of $G$-invariant spherical harmonics (the so-called ``zonal spherical harmonics") of degree $k$ is one-dimensional for all $k$ (we refer to \cite[Appendix]{cava indiana} for a proof of this fact). 

Now, in order to apply Theorem~\ref{Crandall--Rabinowitz theorem} to $F^G$, it will be sufficient to check that the assumptions $(i)$--$(iv)$ are verified. First, $(i)$ holds true by hypothesis. 

Recall that $\pa_X F^G(0,0,\rho^\star)=L^G(\rho^\star):=\restr{L(\rho^\star)}{X^G}$.
We will now show that $\dim \ker L^G = \codim \im L^G =1$, that is condition $(ii)$ in Theorem~\ref{Crandall--Rabinowitz theorem}. 
Let $(\eta,\xi)\in X^G$ be such that $L^G(\rho^\star)[\eta,\xi]=0$. By $(c)$, for all $k\in (\NN\cup\{0\})\setminus\{k^\star\}$ and $i=1,\dots, d_k$, let $\pi_{k,i}$ denote the projection $X^G\to X^G\cap X_{k,i}$. By construction, we have
\begin{equation*}
    \cM(\rho^\star,k)\psi_{k,i}\pi_{k,i}(\eta,\xi)=0.
\end{equation*}
Moreover, $\det\cM(\rho^\star, k)\ne 0$ by $(d)$ and thus $\pi_{k,i}(\eta,\xi)=0$. In other words, we have shown that the projection of any element of $\ker L^G(\rho^\star)$ onto $X^G\cap X_{k,i}$ vanishes for $k\ne k^*$, and thus $\ker L^G(\rho^\star)\subset X_{k^\star,i^\star}$. Again, for any pair $(\eta,\xi)$ in the kernel of $L^G(\rho^\star)$, $(c)$ yields 
\begin{equation*}
    \cM(\rho^\star, k^\star) \psi_{k^\star, i^\star}(\eta,\xi)=0.
\end{equation*}
Recall that, by $(d)$, $\cM(\rho^\star, k^\star)$ is a non-zero, non-invertible $2\times 2$ matrix, thus it has rank $1$. As a result, there exists a pair of real coefficients $(\be,\ga)\ne (0,0)$ such that $\ker L^G(\rho^\star)$ is the one-dimensional vector space spanned by $\left\{ \left( \be Y^\star(\, \cdottone/ R), \ga Y^\star(\cdottone)\right)\right\}$.

In what follows, we will show $(iii)$ and $(iv)$. Notice that it will be enough to consider the case where $L(\rho^\star)+\mu \iota^+$ is a bounded bijection for some $\mu\in \RR$. Indeed, the other case in $(b)$ can be dealt with by simply replacing $F$ with the mapping $(\eta,\xi,\rho)\mapsto F(-\eta,\xi,\rho)$.
Now, let $K: Y^G\to Y^G$ denote the compact operator given by the composition of the inverse of $(L^G(\rho^\star)+\mu\iota^+)$ (which exists by $(b)$) followed by the compact embedding $X^G \hookrightarrow Y^G$. We have 
\begin{equation}\label{Q=}
L^G(\rho^\star)=(\id-\mu K)(L^G(\rho^\star)+\mu\iota^+).
\end{equation}
It follows that 
\begin{equation*}
\im L^G(\rho^\star) = (\id - \mu K) \Big( \underbrace{(L^G(\rho^\star)+\mu\iota^+)(X^G)}_{=Y^G} \Big)=\im (\id-\mu K).
\end{equation*}
Finally, by \cite[Theorem 6.6, (b)]{Brezis}, $\im (\id-\mu K)$ is closed. Moreover, again by \cite[Theorem 6.6, (b), (d)]{Brezis}, we have
\begin{equation*}
\codim \im L^G(\rho^\star) = \codim \im (\id-\mu K) = \dim \ker (\id - \mu K^*)= \dim \ker (\id - \mu K)=1,
\end{equation*}
as claimed. By the above, we can assert that $\im L^G(\rho^\star)$ is a closed subspace of $Y^G$ of codimension $1$, whose orthogonal complement is given by the span of $\left\{ \left( \be Y^\star(R\cdottone), \ga Y^\star(\cdottone)\right)\right\}$ 
Finally, condition $(iv)$ of Theorem~\ref{Crandall--Rabinowitz theorem} follows from $(c)$ and $(e)$. Indeed, 
\begin{equation*}
  \pa_\rho L^G(\rho^\star)\left(\be Y^\star(R\cdottone), \ga Y^\star(\cdottone)\right) = \psi_{k^\star,i^\star}^{-1}\pa_\rho \cM(\rho^\star, k^\star) \begin{pmatrix}
      \be \\ \ga
  \end{pmatrix}
\end{equation*}
is a nonzero element of the span of $ \left\{ \left( \be Y^\star(R\cdottone), \ga Y^\star(\cdottone)\right)\right\}$. 
In other words, the left-hand side in the above is a nonzero element of the orthogonal complement of $\im L^G(\rho^\star)$. This concludes the proof of Lemma~\ref{lem bifurcation}.
\end{proof}
\subsection{The real work}

In what follows, let $D_0$ and $\Om_0$ denote the open balls of $\rn$ centered at the origin with radii $R$ ($0<R<1$) and $1$ respectively. We remark that when 
\begin{equation}\label{T}
    T=T(R):=(1-\sg_c)R^2+\sg_c,
\end{equation}
the overdetermined problem \eqref{new odp in annular config}--\eqref{new odc} admits the following radial solution in $\Om_0\setminus\ol D_0$: 
\begin{equation}\label{u=}
    u(x)=\frac{|x|^2-1}{2}\quad \text{for } R\le|x|\le 1.
\end{equation}
In what follows, we will use a perturbation argument to show the existence of a nontrivial pair of domains $(D, \Om)$ such that the overdetermined problem \eqref{new odp in annular config}--\eqref{new odc} admits a solution.

For some $0<\al<1$, consider the following Banach spaces endowed with their natural norms: $X_1:=C^{2,\al}(\pa D_0)$, $X_2:=C^{2,\al}(\pa\Om_0)$, $Y_1:=C^{1,\al}(\pa D_0)$, $Y_2:=C^{1,\al}(\pa \Om_0)$, and $X:=X_1\times X_2$, $Y:=Y_1\times Y_2$. 
For sufficiently small $(\eta,\xi)\in X$ and $0<\rho<1$, let $\Om_\xi$, $D^\rho_\eta$ denote the bounded domains whose boundaries are given by:
\begin{equation*}
  \pa\Om_\xi:= \setbld{x+\xi(x)\nu(x)}{x\in \pa\Om_0}, 
  \quad \pa D^\rho_\eta:= \setbld{x+(\eta(x)+\rho-R)\nu(x)}{x\in \pa D_0}, 
\end{equation*}
where $\nu(x)=x/|x|$.
Also, let $u_{\eta,\xi, \rho}$ denote the unique solution to the boundary value problem \eqref{new odp in annular config} in the perturbed annular domain $\Om_\xi\setminus \ol D^\rho_\eta$, where $T=T(\rho)$ is defined according to \eqref{T}.

We are interested in how the solution $u_{\eta,\xi, \rho}$ of \eqref{new odp in annular config} in $\Om_\xi\setminus\ol D^\rho_\eta$ changes ``pointwise" with respect to the three parameters $\eta$, $\xi$ and $\rho$. To this end, we will compute its shape derivative. 
The main technical difficulties lie in the following two points: firstly, the functions $u_{\eta,\xi,\rho}$ depend on three parameters, and secondly, each $u_{\eta,\xi,\rho}$ lies in a different function space depending on the choice of $(\eta,\xi,\rho)$.
To overcome these difficulties, we will make use of the following construction. Let 
\begin{equation*}
    E: C^{2,\al}(\pa D_0)\times C^{2,\al}(\pa \Om_0)\times \mathbb{R}\to C^{2,\al}(\rn,\rn)
\end{equation*}
be a bounded linear ``extension operator" that satisfies 
\begin{equation}\label{satisfies}
\restr{E(\eta,\xi,r)}{\pa D_0}=(\eta+r)\nu, \quad 
\restr{E(\eta,\xi,r)}{\pa \Om_0}=\xi\nu.
\end{equation}
Moreover, consider the following pulled-back function: 
\begin{equation}\label{pulled-back function}
    U(\eta,\xi,\rho):= u_{\eta,\xi,\rho}\circ(\id+E(\eta,\xi,\rho-R))\in H^1(\Om_0),\quad \text{for $0<\rho<1$ and small $(\eta,\xi)\in X$}. 
\end{equation}
Then, the (first-order) \emph{shape derivative} of $u_{\eta,\xi,\rho}$ at $(\eta,\xi,\rho)=(0,0,R)$ is defined as 
\begin{equation}\label{shape derivative}
    u'[\eta,\xi,\rho]:=U'(0,0,R)[\eta,\xi,\rho]-\left\langle\gr U(0,0,R), E(\eta,\xi,\rho)\right\rangle,
\end{equation}
where $U'(0,0,R)[\eta,\xi,\rho]$ denotes the Fr\'echet derivative of the pulled-back function $U$ at $(0,0,R)$ in the direction $(\eta,\xi,\rho)$. 
Also, for the sake of simpler notation, we will just write $u'[\eta,\xi]$ instead of $u'[\eta,\xi,0]$.
Finally, notice that the definition given in \eqref{shape derivative} is devised in such a way as to be compatible with a formal application of partial differentiation with respect to $(\eta,\xi,\rho)$ in \eqref{pulled-back function}.
\begin{lemma}\label{u' characterization}
The function $U: X\times (0,1)\to C^{2,\al}(\ol \Om_0\setminus D_0)$ defined in \eqref{pulled-back function} is Fr\'echet differentiable in a neighborhood of $(0,0, R)$. Moreover, for all pairs $(\eta,\xi)\in C^{2,\al}(\pa D_0)\times C^{2,\al}(\pa\Om_0)$, the shape derivative $u'[\eta,\xi]$ at $\rho=R$ is the unique solution to the following boundary value problem. 
\begin{equation}\label{u' eq}
    \begin{cases}
        \De u'=0\quad \tin \Om_0\setminus\ol D_0,\\
        u'=\left(-u_\nu  + \frac{\langle x,\nu\rangle}{\sg_c}\right)\eta=\frac{1-\sg_c}{\sg_c} R \,\eta\quad \ton \pa D_0,\\
         u'=-u_\nu \xi =-\xi \quad \ton \pa\Om_0.
    \end{cases}
\end{equation}
\end{lemma}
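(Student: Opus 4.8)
The plan is to establish the Fr\'echet differentiability of $U$ via the implicit function theorem applied to a suitable map, and then identify the shape derivative by formally differentiating the transmission-type boundary value problem \eqref{new odp in annular config} with respect to the parameters.

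\textbf{Step 1: reformulation via a fixed function space.} First I would rewrite the boundary value problem \eqref{new odp in annular config} on the moving domain $\Om_\xi\setminus\ol D^\rho_\eta$ as an equation for the pulled-back function $U(\eta,\xi,\rho)$ on the fixed domain $\Om_0\setminus\ol D_0$, using the diffeomorphism $\Phi_{\eta,\xi,\rho}:=\id+E(\eta,\xi,\rho-R)$. Under this change of variables, $\De$ transforms into a second-order elliptic operator $\cL_{\eta,\xi,\rho}$ with coefficients depending smoothly (indeed analytically) on the Jacobian of $\Phi_{\eta,\xi,\rho}$, hence on $(\eta,\xi,\rho)$; the Dirichlet datum on $\pa D_0$ becomes $x\mapsto \frac{|x+(\eta(x)-R+\rho)\nu(x)|^2-T(\rho)}{2\sg_c}$ and the datum on $\pa\Om_0$ stays $0$. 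Thus $U(\eta,\xi,\rho)$ is characterized as the unique solution in $C^{2,\al}(\ol\Om_0\setminus D_0)$ of
\begin{equation*}
\cL_{\eta,\xi,\rho}U=N\ \tin\Om_0\setminus\ol D_0,\quad U=g_{\eta,\xi,\rho}\ \ton\pa D_0,\quad U=0\ \ton\pa\Om_0,
\end{equation*}
for an explicit $g_{\eta,\xi,\rho}$ depending smoothly on the parameters.

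\textbf{Step 2: differentiability via the implicit function theorem.} Define $\cG:X\times(0,1)\times C^{2,\al}(\ol\Om_0\setminus D_0)\to C^{0,\al}(\ol\Om_0\setminus D_0)\times C^{2,\al}(\pa D_0)$ by $\cG(\eta,\xi,\rho,w):=\bigl(\cL_{\eta,\xi,\rho}w-N,\ \restr{w}{\pa D_0}-g_{\eta,\xi,\rho}\bigr)$, restricting $w$ to vanish on $\pa\Om_0$ (or incorporating that condition into the target space). Then $\cG$ is of class $C^\infty$ (in fact analytic) in all its arguments since the coefficients of $\cL$ and the datum $g$ are, and $\cG(0,0,R,U(0,0,R))=0$ because $U(0,0,R)(x)=\tfrac{|x|^2-1}{2}$ solves the radial problem. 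The partial derivative $\pa_w\cG(0,0,R,U(0,0,R))$ is the map $w\mapsto(\De w,\restr{w}{\pa D_0})$ (with $w=0$ on $\pa\Om_0$), which is an isomorphism by standard Schauder theory for the Dirichlet problem on the annulus $\Om_0\setminus\ol D_0$. The implicit function theorem then yields a $C^\infty$ solution $(\eta,\xi,\rho)\mapsto w(\eta,\xi,\rho)$ near $(0,0,R)$, and by uniqueness $w(\eta,\xi,\rho)=U(\eta,\xi,\rho)$; in particular $U$ is Fr\'echet differentiable near $(0,0,R)$.

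\textbf{Step 3: identification of the shape derivative.} Differentiating the equation $\De U(\eta,\xi,\rho)=N$ in the direction $(\eta,\xi,\rho)$ (with $\rho=0$, i.e.\ holding the radial parameter fixed) and then subtracting $\langle\gr U(0,0,R),E(\eta,\xi,-R)\rangle$ as in \eqref{shape derivative} shows that $u'[\eta,\xi]$ is harmonic in $\Om_0\setminus\ol D_0$: the $\De N=0$ term vanishes and the correction term precisely cancels the contribution from the moving frame, using that $\De$ commutes with the relevant operations on the reference annulus where the coefficients are constant. For the boundary conditions, I would differentiate the identities $u_{\eta,\xi,\rho}=\tfrac{|x|^2-T}{2\sg_c}$ on $\pa D^\rho_\eta$ and $u_{\eta,\xi,\rho}=0$ on $\pa\Om_\xi$ using the classical formula that the shape derivative of $\restr{v}{\pa\Om_t}=\varphi$ is $v'=-v_\nu(V\cdot\nu)+\varphi'$ on the fixed boundary, where $V$ is the normal velocity. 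On $\pa\Om_0$ the velocity field is $\xi\nu$ and the datum is the constant $0$, giving $u'=-u_\nu\xi=-\xi$ since $u_\nu=\langle x,\nu\rangle=1$ on $\pa\Om_0$. On $\pa D_0$ the velocity is $\eta\nu$ (the $\rho$-part is frozen) and the datum $\tfrac{|x|^2-T}{2\sg_c}$ has tangential-on-$\pa D_0$ shape derivative $\tfrac{\langle x,\nu\rangle}{\sg_c}\eta$; combined with $u_\nu=\langle x,\nu\rangle$ on $\pa D_0$ (the transmission condition in \eqref{new odc}, equivalently $u_\nu=R$ on $\pa D_0$), this yields $u'=\bigl(-u_\nu+\tfrac{\langle x,\nu\rangle}{\sg_c}\bigr)\eta=\tfrac{1-\sg_c}{\sg_c}R\,\eta$ on $\pa D_0$. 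Uniqueness of $u'[\eta,\xi]$ follows from the uniqueness of the Dirichlet problem for the Laplacian on $\Om_0\setminus\ol D_0$.

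\textbf{Main obstacle.} The delicate point is the bookkeeping in Step 3: correctly expanding the Dirichlet datum $\tfrac{|x+(\eta(x)-R+\rho)\nu(x)|^2-T(\rho)}{2\sg_c}$ to first order and verifying that, together with the $-\langle\gr U,E\rangle$ correction term built into the definition \eqref{shape derivative}, everything collapses to the clean expressions in \eqref{u' eq} — in particular tracking how the $\rho$-dependence of $T$ drops out when $\rho=0$, and confirming the cancellation that makes $u'$ harmonic rather than merely satisfying an inhomogeneous equation. This is the classical Hadamard shape-derivative computation, but the two-parameter moving-interface structure and the $1/\sg_c$ factors coming from the non-standard Dirichlet datum on $\pa D_0$ require care.
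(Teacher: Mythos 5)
Your proposal is correct and takes essentially the same route as the paper: the paper simply cites \cite[Subsections 5.3.6 and 5.6]{HP2005} for the Fr\'echet differentiability of $U$ and for the fact that the shape derivative is harmonic with Dirichlet data obtained by formal differentiation of the boundary conditions, whereas you unpack those citations by making the pullback to the fixed annulus and the implicit-function-theorem argument explicit, and then perform the same Hadamard-type boundary computation. The resulting boundary values on $\pa D_0$ and $\pa\Om_0$ match the paper's. One small imprecision worth noting: in Step~3 you write that one differentiates ``$\De U(\eta,\xi,\rho)=N$'', but on the fixed annulus the pulled-back function satisfies $\cL_{\eta,\xi,\rho}U=N$ for a variable-coefficient operator, not $\De U=N$; your subsequent remark about the correction term $-\langle\gr U,E\rangle$ cancelling the moving-frame contribution shows you have the right idea (equivalently, $\De u'=(\De u_{\eta,\xi,\rho})'=N'=0$ since the shape derivative commutes with fixed spatial differential operators), but the display as written is not the equation that gets differentiated.
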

\begin{proof}
The Fr\'echet differentiability of the function $U$ in the $C^{2,\al}$-norm (and thus, the shape differentiability of $u_{\eta,\xi,\rho}$) follows from the standard theory of shape differentiability in regular functional spaces (see \cite[Subsection 5.3.6]{HP2005}). Moreover, following \cite[Section 5.6]{HP2005} and the references therein, it is clear that the shape derivative $u'[\eta,\xi]$ is harmonic in the interior of $\Om_0\setminus\ol D_0$ and that the boundary conditions on $\pa (\Om_0\setminus\ol D_0)$ coincide with those obtained by a formal differentiation of the boundary conditions in \eqref{new odp in annular config}. The values of $\restr{u'[\eta,\xi]}{\pa D_0\cup\pa\Om_0}$ can be computed by means of G\^ateaux derivatives, as shown below. 

By combining \eqref{shape derivative}, \eqref{satisfies}, and \eqref{new odp in annular config}, we get the following:
\begin{equation*}
\begin{aligned}
&\text{for }x\in \pa D_0:\quad &u'[\eta,\xi](x)=\restr{\frac{d}{dt}}{t=0} \left(\frac{|x+t\eta(x)\nu(x)|^2-T(R)}{2\sg_c}  \right)-\left\langle\gr u(x), \eta(x)\nu(x)\right\rangle\\
& &= \frac{\eta(x)\langle x,\nu(x)\rangle}{\sg_c}-u_\nu(x)\eta(x)= \frac{1-\sg_c}{\sg_c} R\eta(x);\\
&\text{for }x\in \pa \Om_0:\quad &u'[\eta,\xi](x)=-\left\langle\gr u(x), \xi(x)\nu(x)\right\rangle= -u_\nu(x)\xi(x)=-\xi(x),
\end{aligned}
\end{equation*}
where in the last equalities we have used the fact that, by \eqref{u=}, $u_\nu= |x|$ on $\pa D_0\cup \pa\Om_0$ and $\nu=x/R$ on $\pa D_0$.
\end{proof}

\begin{corollary}\label{cor: explicit computations}
Consider the pair $(\eta,\xi)\in X$ given by the following expression for some coefficients $\be,\ga\in\RR$, $k\in \NN\cup\{0\}$ and $i\in \{1,\dots, d_k\}$ :
 \begin{equation}\label{expansions}
    \eta(R\te)= \be Y_{k,i}(\te),\quad 
    \xi(\te)=\ga Y_{k,i}(\te), \quad \text{for }\te\in\SS^{N-1}.   
 \end{equation}
 Then, the following holds true for all $r\in (R,1)$ and $\te\in \SS^{N-1}$.
\begin{equation}\label{u' linear decomposition}
 u'[\eta,\xi](r\te)= 
 \left\{ (\be A_k+\ga C_k) s_k(r)+ (\be B_k+\ga D_k)t_k(r)\right\}Y_{k,i}(\te), 
\end{equation}
where, for $N\ge 3$ or $k\ge 1$:
\begin{equation}\label{A_k, B_k, C_k, D_k higher}
\begin{aligned}
 &s_k(r):= r^k,\quad  &t_k(r):=r^{2-N-k},\\
    &A_k:= \frac{1-\sg_c}{\sg_c}\frac{R^{N-1+k}}{R^{N-2+2k}-1} ,\quad  &B_k:= \frac{\sg_c-1}{\sg_c} \frac{R^{N-1+k}}{R^{N-2+2k}-1} ,\\ 
    &C_k:=\frac{1}{R^{N-2+2k}-1} ,\quad  &D_k:= \frac{-R^{N-2+2k}}{R^{N-2+2k}-1},
\end{aligned}
\end{equation}
and for $N=2$ and $k=0$:
\begin{equation}\label{A_k, B_k, C_k, D_k N=2 and k=0}
\begin{aligned}
&s_0(r):= 1,\quad  &t_0(r):=\log r,\\
    &A_0:= 0 ,\quad  &B_0:= \frac{1-\sg_c}{\sg_c}\frac{R}{\log R},\\ &C_0:=-1 ,\quad  &D_0:= \frac{1}{\log R}.
\end{aligned}
\end{equation}
\end{corollary}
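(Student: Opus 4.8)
The plan is to solve the linear boundary value problem \eqref{u' eq} explicitly by separation of variables, exploiting that, under the ansatz \eqref{expansions}, the Dirichlet data on both components of $\pa(\Om_0\setminus\ol D_0)$ are scalar multiples of the single spherical harmonic $Y_{k,i}$.

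First I would invoke Lemma~\ref{u' characterization}, which guarantees that $u'[\eta,\xi]$ is \emph{the} unique solution of \eqref{u' eq}. It therefore suffices to produce a function of the separated form $u'(r\te)=f(r)\,Y_{k,i}(\te)$ that is harmonic in the annulus and has the prescribed traces: uniqueness will then force it to coincide with $u'[\eta,\xi]$. Writing the Laplacian in polar coordinates and using $-\De_\tau Y_{k,i}=\la_k Y_{k,i}$ with $\la_k=k(k+N-2)$, the equation $\De u'=0$ in $\Om_0\setminus\ol D_0$ reduces to the Euler-type ordinary differential equation
\begin{equation*}
    f''(r)+\frac{N-1}{r}\,f'(r)-\frac{k(k+N-2)}{r^2}\,f(r)=0, \qquad R<r<1 .
\end{equation*}
Its indicial polynomial $m(m-1)+(N-1)m-k(k+N-2)$ factors as $(m-k)\bigl(m+k+N-2\bigr)$, so the characteristic exponents are $m=k$ and $m=2-N-k$. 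These are distinct precisely unless $N=2$ and $k=0$; hence for $N\ge 3$ or $k\ge 1$ the general solution is $f=a\,s_k+b\,t_k$ with $s_k(r)=r^k$ and $t_k(r)=r^{2-N-k}$, while in the degenerate case $N=2$, $k=0$ (where $\la_0=0$ and the two exponents collide at $0$) one gets $f=a\,s_0+b\,t_0$ with $s_0\equiv 1$, $t_0(r)=\log r$.

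Next I would determine $a$ and $b$ from the boundary conditions in \eqref{u' eq}. Substituting \eqref{expansions} and using that $\nu=x/R$ and $|x|=R$ on $\pa D_0$ while $|x|=1$ on $\pa\Om_0$, the Dirichlet data become $f(R)=\frac{1-\sg_c}{\sg_c}R\,\be$ and $f(1)=-\ga$. Since $s_k(1)=t_k(1)=1$ (resp.\ $s_0(1)=1$, $t_0(1)=0$), this amounts to the $2\times 2$ linear system
\begin{equation*}
    \begin{pmatrix} s_k(R) & t_k(R) \\ s_k(1) & t_k(1) \end{pmatrix}\begin{pmatrix} a \\ b \end{pmatrix}=\begin{pmatrix} \frac{1-\sg_c}{\sg_c}R\,\be \\ -\ga \end{pmatrix},
\end{equation*}
whose coefficient determinant equals $R^k-R^{2-N-k}$ (resp.\ $-\log R$) and does not vanish for $0<R<1$. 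Solving by Cramer's rule and separating the contributions of $\be$ and $\ga$ yields exactly $a=\be A_k+\ga C_k$ and $b=\be B_k+\ga D_k$, with $A_k,B_k,C_k,D_k$ as in \eqref{A_k, B_k, C_k, D_k higher} (resp.\ $A_0,B_0,C_0,D_0$ as in \eqref{A_k, B_k, C_k, D_k N=2 and k=0}); this is precisely \eqref{u' linear decomposition}.

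This argument presents no genuine obstacle: it is a textbook separation-of-variables computation on a spherical shell. The only point that needs a moment's care is the degenerate mode $N=2$, $k=0$, where the two indicial roots coincide and the second radial solution has to be taken as $\log r$ rather than a power of $r$; the rest is routine bookkeeping with the $2\times 2$ system.
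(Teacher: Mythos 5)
Your proposal is correct and takes essentially the same route as the paper: separation of variables, the Euler ODE for the radial factor with indicial roots $k$ and $2-N-k$ (degenerating at $N=2$, $k=0$), and determination of the coefficients from the boundary data. The only cosmetic difference is that you solve a single $2\times 2$ system and then split by linearity, whereas the paper sets up two such systems (one for $(A_k,B_k)$, one for $(C_k,D_k)$); your explicit appeal to uniqueness of the solution to \eqref{u' eq} to justify the separated ansatz is, if anything, a slightly cleaner presentation of the same computation.
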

\begin{proof}
Let us pick arbitrary $k\in\NN\cup\{0\}$ and $i\in\{1,\dots, d_k\}$. We will use the method of separation of variables to find the solution of problem \eqref{u' eq} when the pair $(\eta,\xi)$ is given by \eqref{expansions}.
We will be searching for solutions to \eqref{u' eq} of the form $u'[\eta,\xi]=u'(r,\theta)=f(r)g(\theta)$ (where $r:=\abs{x}$ and $\theta:=x/\abs{x}$ for $x\ne 0$).
Using \eqref{Reilly?} to decompose the Laplacian into its radial and angular components, the equation $\De u'=0$ in $\Om_0\setminus\ol D_0$ can be rewritten as
\begin{equation}
f_{rr}(r)g(\theta)+\frac{N-1}{r} f_r(r)g(\theta)+\frac{1}{r^2}f(r)\De_{\tau}g(\theta)=0 \quad\text{for } R<r<1,\; \theta\in\SS^{N-1}.
\end{equation}
Take $g=Y_{k,i}$.
Under this assumption, we get the following equation for $f$:
\begin{equation}\label{f}
f_{rr}(r)+\frac{N-1}{r}f_r(r)-\frac{\lambda_k}{r^2}f(r)=0 \quad\text{for } R<r<1.
\end{equation}
Since we know that $\la_k=k(k+N-2)$, it can be easily checked that any solution to \eqref{f} consists of a linear combination of the following two independent solutions $s_k$ and $t_k$:
\begin{equation*}
s_k(r):= r^k \text{ for }k\in \NN\cup\{0\},\quad t_k(r):= r^{2-N-k}\text{ for }2-N-k\ne 0, \quad t_0(r):=\log r \text{ for }N=2. 
\end{equation*}
As the solution mapping 
$\RR^2\ni(\be,\ga)\mapsto u'[\eta,\xi]$ is linear, it follows that there exist some real constants $A_k$, $B_k$, $C_k$ and $D_k$ such that \eqref{u' linear decomposition} holds.

Now, with \eqref{expansions} at hands, the boundary conditions in \eqref{u' eq} can be expressed as the following system:
\begin{equation*}
\begin{cases}
A_k s_k(R)+B_k t_k(R)=\frac{1-\sg_c}{\sg_c}R,\\
C_k s_k(R)+D_k t_k(R)=0,\\
A_k s_k(1)+B_k t_k(1)=0,\\
C_k s_k(1)+D_k t_k(1)=-1.
\end{cases}    
\end{equation*}
Finally, by solving it we obtain the desired coefficients in  \eqref{A_k, B_k, C_k, D_k higher}--\eqref{A_k, B_k, C_k, D_k N=2 and k=0}.
\end{proof}

Consider the following mapping $F: X\times \RR \to Y$  
\begin{equation}\label{F}
    \begin{aligned}
        (\eta,\xi, \rho)\mapsto\left( F_1(\eta,\xi, \rho), F_2(\eta,\xi, \rho)       \right),
    \end{aligned}
\end{equation}
where 
\begin{equation*}
\begin{aligned}
&X\times \RR \ni (\eta,\xi, \rho)\mapsto  F_1(\eta,\xi, \rho):= \big\langle\restr{(\id- \gr u_{\eta,\xi, \rho})}{\pa D^\rho_\eta}, \nu^\rho_\eta\big\rangle\circ(\id+(\eta+\rho-R)\nu)\; \in Y_1,\\
&X\times \RR\ni (\eta,\xi, \rho)\mapsto F_2(\eta,\xi, \rho):= \restr{(\pa_{\nu_\xi} u_{\eta,\xi, \rho}- 1)}{\pa \Om_\xi}\circ(\id+\xi\nu)\; \in Y_2,
\end{aligned}
 \end{equation*}
where $\nu_\eta^\rho$ and $\nu_\xi$ denote the outward unit normal vectors to $\pa D_\eta^\rho$ and $\pa\Om_\xi$ respectively. 
Notice that, for all fixed $0<\rho<1)$, the mapping $F$ is well-defined in a neighborhood of $(0,0,\rho)\in X\times \RR$. 
Moreover, by construction, $F(\eta,\xi, \rho)$ vanishes if and only if the solution of \eqref{eq:Dirichlet problem} with respect to the pair $(D^\rho_\eta,\Om_\xi)$ admits two overdetermined level lines $\pa\om_1\subset D^\rho_\eta$ and $\pa\om_2=\pa\Om_0$. In particular, it is easy to verify that, by the definition of $T=T(R)$ in \eqref{T}, $F(0,0, R)=(0,0)$ for all $0<R<1$.

The following lemmas are concerned with the well-definedness and the Fr\'echet differentiability of $F$ in a neighborhood of $(0,0, R)\in X\times \RR$.

\begin{lemma}\label{label F well def}
Let $0<R<1$ and $i=1,2$. Then, $F_i$ is a well-defined mapping from a neighborhood of $(0,0,R)\in X\times \RR$ into $Y_i$.
\end{lemma}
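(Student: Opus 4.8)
The goal is to show that, for $0<R<1$ and $i=1,2$, the expression defining $F_i$ makes sense as an element of $Y_i=C^{1,\al}(\pa D_0)$ (resp.\ $C^{1,\al}(\pa\Om_0)$) whenever $(\eta,\xi,\rho)$ is close enough to $(0,0,R)$ in $X\times\RR$. The plan is to proceed in three steps: first verify that the perturbed domains are genuine $C^{2,\al}$ domains forming an admissible annular configuration; second, invoke the two-phase elliptic regularity theory to get a solution $u_{\eta,\xi,\rho}$ of the requisite regularity on each side of the free interface; third, check that the various compositions, restrictions and normal-derivative operations appearing in the definitions of $F_1$ and $F_2$ land in $C^{1,\al}$ of the appropriate reference sphere.

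\emph{Step 1 (admissible perturbed configuration).} For $(\eta,\xi)\in X=C^{2,\al}(\pa D_0)\times C^{2,\al}(\pa\Om_0)$ of sufficiently small norm and $\rho$ close to $R$, the maps $x\mapsto x+(\eta(x)-R+\rho)\nu(x)$ and $x\mapsto x+\xi(x)\nu(x)$ are $C^{2,\al}$ diffeomorphisms of $\pa D_0$ and $\pa\Om_0$ onto their images (a small $C^1$ perturbation of the identity is a diffeomorphism); hence $\pa D^\rho_\eta$ and $\pa\Om_\xi$ are embedded $C^{2,\al}$ hypersurfaces, and for $\|\eta\|,\|\xi\|$ small and $|\rho-R|$ small we still have $\ol{D^\rho_\eta}\subset\Om_\xi$ with $\Om_\xi\setminus\ol{D^\rho_\eta}$ connected, so $(D^\rho_\eta,\Om_\xi)$ is an admissible pair. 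In particular the outward unit normals $\nu^\rho_\eta$ on $\pa D^\rho_\eta$ and $\nu_\xi$ on $\pa\Om_\xi$ are well-defined $C^{1,\al}$ vector fields, depending continuously on the data.

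\emph{Step 2 (existence and regularity of $u_{\eta,\xi,\rho}$).} By the divergence-form ellipticity of $\dv(\sg\gr\,\cdottone)$ and the Lax--Milgram theorem, problem \eqref{new odp in annular config} in $\Om_\xi\setminus\ol{D^\rho_\eta}$ — equivalently the transmission problem with interface $\pa D^\rho_\eta$ — has a unique weak solution $u_{\eta,\xi,\rho}\in H^1$; since $\pa D^\rho_\eta$ and $\pa\Om_\xi$ are of class $C^{2,\al}$ and the Dirichlet data on $\pa D^\rho_\eta$ and $\pa\Om_\xi$ are $C^{2,\al}$, the two-phase Schauder estimates of \cite{XB2013, Zhuge} (as already used in section~\ref{sec:particular cases}) give $u_{\eta,\xi,\rho}\in C^{2,\al}(\ol{D^\rho_\eta})\cap C^{2,\al}(\ol{\Om_\xi}\setminus D^\rho_\eta)$. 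In particular $\gr u_{\eta,\xi,\rho}$ extends to a $C^{1,\al}$ vector field up to each of the two boundary components $\pa D^\rho_\eta$ (from outside) and $\pa\Om_\xi$.

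\emph{Step 3 (the composed maps land in $Y_i$).} For $F_1$: the trace of $\id-\gr u_{\eta,\xi,\rho}$ on $\pa^+D^\rho_\eta$ is $C^{1,\al}$ on $\pa D^\rho_\eta$ by Step~2, pairing with the $C^{1,\al}$ normal $\nu^\rho_\eta$ keeps it in $C^{1,\al}(\pa D^\rho_\eta)$, and precomposing with the $C^{2,\al}$ (hence $C^{1,\al}$) diffeomorphism $\id+(\eta+\rho-R)\nu:\pa D_0\to\pa D^\rho_\eta$ yields an element of $C^{1,\al}(\pa D_0)=Y_1$ (composition of a $C^{1,\al}$ function with a $C^{1,\al}$ diffeomorphism is $C^{1,\al}$). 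For $F_2$: $\pa_{\nu_\xi}u_{\eta,\xi,\rho}=\langle\gr u_{\eta,\xi,\rho},\nu_\xi\rangle$ restricted to $\pa\Om_\xi$ is $C^{1,\al}$ on $\pa\Om_\xi$, subtracting the constant $1$ preserves this, and precomposing with the diffeomorphism $\id+\xi\nu:\pa\Om_0\to\pa\Om_\xi$ gives an element of $C^{1,\al}(\pa\Om_0)=Y_2$. This establishes well-definedness.

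\emph{Main obstacle.} The one genuinely delicate point is Step~2: one must be sure the two-phase Schauder theory applies on the \emph{perturbed} interface $\pa D^\rho_\eta$ with the constant $R$ in the relation \eqref{T} replaced by the running parameter $\rho$, and that the resulting one-sided $C^{2,\al}$ estimates are uniform enough that the solution depends continuously on $(\eta,\xi,\rho)$ near $(0,0,R)$; the remaining bookkeeping (Steps 1 and 3) is routine composition and restriction in Hölder spaces.
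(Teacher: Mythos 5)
Your overall three-step skeleton (admissibility of the perturbed annular configuration, regularity of the state $u_{\eta,\xi,\rho}$, bookkeeping in H\"older spaces for restrictions/compositions) is exactly the structure of the paper's short proof, and Steps~1 and~3 are fine. However, Step~2 rests on a misreading of the setup, and the ``main obstacle'' you flag is not actually there.

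The point you miss is that the reformulation at the start of Section~\ref{sec:counter(1,1)} has traded the two-phase transmission problem for a \emph{single-phase} annular Dirichlet problem: $u_{\eta,\xi,\rho}$ is, by definition, the solution of \eqref{new odp in annular config} in the annulus $\Om_\xi\setminus\ol{D^\rho_\eta}$ only, i.e.\ $\De u_{\eta,\xi,\rho}=N$ with prescribed $C^{2,\al}$ Dirichlet data on both $\pa D^\rho_\eta$ and $\pa\Om_\xi$. There is no interface and no transmission condition, and $u_{\eta,\xi,\rho}$ is not even defined inside $D^\rho_\eta$, so writing $u_{\eta,\xi,\rho}\in C^{2,\al}(\ol{D^\rho_\eta})$ makes no sense. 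Consequently, the two-phase Schauder theory of \cite{XB2013,Zhuge} is not the right tool here: the regularity $u_{\eta,\xi,\rho}\in C^{2,\al}(\ol{\Om_\xi}\setminus D^\rho_\eta)$ follows directly from \emph{classical} Schauder theory for the Dirichlet problem (\cite[Chapter 6]{GT}), since the data and both boundary components are $C^{2,\al}$ and the domains are genuine $C^{2,\al}$ annuli for $(\eta,\xi,\rho)$ near $(0,0,R)$. Once Step~2 is corrected in this way, your Steps~1 and~3 give the statement, matching the paper's argument; the continuity/differentiability in the parameters that you also worry about is deferred to the subsequent lemma and is not part of this one.
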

\begin{proof}
    First, notice that, for $0<r<1$ and small enough $(\eta,\xi)\in X$, the sets $D^\rho_\eta$ and $\Om_\xi$ are well defined domains of class $C^{2,\al}$ satisfying $\ol D^\rho_\eta\subset \Om_\xi$. As a result, $\nu^\rho_\eta\in C^{1,\al}(\pa D_\eta^\rho,\rn)$, $\nu_\xi\in C^{1,\al}(\pa \Om_\xi,\rn)$ and $u_{\eta,\xi, \rho}\in C^{2,\al}(\ol \Om_\xi\setminus D^\rho_\eta)$. Then, it follows by composition that $F_1(\eta,\xi,\rho)\in C^{1,\al}(\pa D_0)$ and $F_2(\eta,\xi,\rho)\in C^{1,\al}(\pa D_0)$. 
\end{proof}
The following lemma further shows the Fr\'echet differentiability of $F_1$, $F_2$ and gives an explicit formula for their Fr\'echet derivatives.
\begin{lemma}\label{DF_1, DF_2}
For all $0<R<1$ and $i=1,2$, $F_i$ is a Fr\'echet differentiable mapping from a neighborhood of $(0,0,R)\in X\times \RR$ into $Y_i$, whose Fr\'echet derivatives are given by: 
\begin{equation}\label{first expression for the frechet derivatives}
      \pa_X F_1(0,0,R)[\eta,\xi]= \restr{-\pa_\nu u'[\eta,\xi]}{\pa D_0},\quad
      \pa_X F_2(0,0,R)[\eta,\xi]= \restr{\pa_\nu u'[\eta,\xi]}{\pa \Om_0}+\xi.
\end{equation}
Moreover, under \eqref{expansions}, the expressions in \eqref{first expression for the frechet derivatives} become:
\begin{equation}\label{second expression for the frechet derivatives}
\begin{aligned}
          \pa_X F_1(0,0,R)[\eta,\xi]=  \left\{ \cA_k\be +\cB_k\ga \right\} Y_{k,i}(\,\cdottone /R),\\
      \pa_X F_2(0,0,R)[\eta,\xi]=  \left\{ \cC_k\be+\cD_k\ga \right\} Y_{k,i}(\cdottone),
      \end{aligned}
\end{equation}
where, for $N\ge 3$ or $k\ge 1$:
\begin{equation*}
\begin{aligned}    
    &\cA_k:= \left(\frac{\sg_c-1}{\sg_c}\right)\frac{kR^{N-2+2k}+(k+N-2)}{R^{N-2+2k}-1},  &\cB_k:= \frac{(2-N-2k)R^{k-1}}{R^{N-2+2k}-1},\\&\cC_k:= \left(\frac{\sg_c-1}{\sg_c}\right)\frac{(2-N-2k)R^{N-1+k}}{R^{N-2+2k}-1},
    &\cD_k:=   \frac{(k+N-1)R^{N-2+2k}+(k-1)}{R^{N-2+2k}-1},
\end{aligned}
\end{equation*}
while, for $N=2$ and $k=0$:
\begin{equation*}
    \cA_0:= \left(\frac{\sg_c-1}{\sg_c}\right)\frac{1}{\log R},\quad \cB_0:= -\frac{1}{R\log R}, \quad \cC_0:= \left(\frac{\sg_c-1}{\sg_c}\right)\frac{-1}{\log R},\quad    \cD_0:=  \frac{1+R\log R}{R\log R}.
\end{equation*}
\end{lemma}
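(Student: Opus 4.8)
The plan is to establish the lemma in two stages. \textbf{Stage 1} proves that $F_1$ and $F_2$ are Fr\'echet differentiable near $(0,0,R)$ and identifies their partial differentials through the abstract formula \eqref{first expression for the frechet derivatives}; \textbf{Stage 2} substitutes into that formula the explicit description of the shape derivative $u'[\eta,\xi]$ furnished by Corollary~\ref{cor: explicit computations} and simplifies to obtain \eqref{second expression for the frechet derivatives}.

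For Stage 1, I would write each $F_i$ as a composition of maps already known to be differentiable: the pulled-back solution $U(\eta,\xi,\rho)$ of \eqref{pulled-back function}, which is Fr\'echet differentiable into $C^{2,\al}(\ol\Om_0\setminus D_0)$ by Lemma~\ref{u' characterization}; the gradient, a bounded linear operator $C^{2,\al}\to C^{1,\al}$; pre- and post-composition with the parametrizing diffeomorphisms $\id+E(\eta,\xi,\rho-R)$ and $\id+\xi\nu$ and their inverses, which depend smoothly on the parameters; and pairing with the outward unit normals $\nu^\rho_\eta$, $\nu_\xi$, themselves smooth $C^{1,\al}$-valued functions of the parameters. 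Fr\'echet differentiability of $F_1,F_2$ then follows from the chain rule, and Lemma~\ref{label F well def} guarantees they take values in $Y_1,Y_2$. To evaluate the differentials at the base point I would use that there the diffeomorphisms reduce to the identity, $\pa D_0$ and $\pa\Om_0$ are round spheres (so $x_\tau=\Orig$ on them), and $u_0(x)=(|x|^2-1)/2$, whence $\gr u_0(x)=x$ and $\gr^2 u_0=\identmatrix$. Differentiating $u_{\eta,\xi,\rho}$ itself produces $\pm\pa_\nu u'[\eta,\xi]$, in agreement with the formal differentiation of the boundary conditions in \eqref{new odp in annular config} (just as in the proof of Lemma~\ref{u' characterization}); the remaining ``transport'' terms collapse at the base point. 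For $F_1$ the shape velocity contributes $\langle\eta\nu,\nu\rangle=\eta$ via the ``$\id$'' summand and $\langle\gr^2 u_0\,\nu,\nu\rangle\,\eta=\eta$ via the ``$\gr u$'' summand, with opposite signs since $F_1=\langle\id-\gr u,\nu\rangle$, while every term still carrying the factor $\id-\gr u_0$ vanishes because $\id-\gr u_0\equiv\Orig$ on $\ol\Om_0\setminus D_0$; the two $\eta$'s cancel, leaving $\pa_X F_1(0,0,R)[\eta,\xi]=-\pa_\nu u'[\eta,\xi]$ on $\pa D_0$. For $F_2$ there is no ``$\id$'' summand, and the corresponding normal-velocity contribution is $\langle\gr^2 u_0\,\nu,\nu\rangle\,\xi=\xi$, giving $\pa_X F_2(0,0,R)[\eta,\xi]=\pa_\nu u'[\eta,\xi]+\xi$ on $\pa\Om_0$. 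This is \eqref{first expression for the frechet derivatives}.

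For Stage 2, I would insert the decomposition \eqref{u' linear decomposition} of $u'[\eta,\xi]$ (for the pair $(\eta,\xi)$ of \eqref{expansions}) into \eqref{first expression for the frechet derivatives}. On $\pa D_0$ one has $\nu=x/|x|$, so $\pa_\nu$ acts as $\pa_r$ at $r=R$ and the angular factor $Y_{k,i}(\te)$ becomes $Y_{k,i}(\cdottone/R)$; on $\pa\Om_0$ one has $\pa_\nu=\pa_r$ at $r=1$ and $Y_{k,i}(\te)=Y_{k,i}(\cdottone)$. Comparing the coefficients of $\be$ and $\ga$ then gives
\begin{align*}
\cA_k&=-\big(A_k s_k'(R)+B_k t_k'(R)\big), & \cB_k&=-\big(C_k s_k'(R)+D_k t_k'(R)\big),\\
\cC_k&=A_k s_k'(1)+B_k t_k'(1), & \cD_k&=C_k s_k'(1)+D_k t_k'(1)+1 ,
\end{align*}
the last $+1$ being the contribution of $\xi$ in \eqref{first expression for the frechet derivatives}. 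Plugging in $s_k,t_k,A_k,B_k,C_k,D_k$ from \eqref{A_k, B_k, C_k, D_k higher} (with $s_k'(r)=kr^{k-1}$, $t_k'(r)=(2-N-k)r^{1-N-k}$, and $B_k=-A_k$) and, separately, treating the degenerate case $N=2$, $k=0$ via \eqref{A_k, B_k, C_k, D_k N=2 and k=0} (where $s_0'\equiv 0$, $t_0'(r)=1/r$), elementary if tedious algebra produces all the stated values of $\cA_k,\cB_k,\cC_k,\cD_k$.

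The step I expect to be the genuine obstacle is Stage 1: organizing the shape-calculus chain rule so that the material derivatives split correctly into the shape derivative $u'$ plus transport terms, keeping track of all signs, and verifying that every surviving transport term either vanishes (because $\id-\gr u_0\equiv\Orig$ near $\pa D_0$, or because $x_\tau=\Orig$ on the round spheres) or cancels in pairs, so that the clean identities \eqref{first expression for the frechet derivatives} emerge. Once Stage 1 is settled, Stage 2 is routine substitution.
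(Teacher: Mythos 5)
Your proposal matches the paper's own proof: the paper likewise rewrites $F_1,F_2$ as compositions of the pulled-back gradient $\gr u_{\eta,\xi,\rho}\circ(\id+E)=(\identmatrix+DE)^{-T}\gr U$ and pulled-back normal, establishes Fr\'echet differentiability of each ingredient via the chain rule, and then asserts that \eqref{first expression for the frechet derivatives} ``follows with ease by computing them as G\^ateaux derivatives''; your Stage~1 simply spells out the cancellations the paper elides (the $\eta$'s cancel in $F_1$, the normal-variation term in $F_2$ is tangential and pairs to zero against $\gr u_0=x=\langle x,\nu\rangle\nu$ on $\pa\Om_0$), and your Stage~2 is exactly the paper's substitution of Corollary~\ref{cor: explicit computations}.

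One small caveat you should be aware of: carrying out your own Stage~2 for $N=2$, $k=0$ with $s_0'\equiv 0$, $t_0'(r)=1/r$ and \eqref{A_k, B_k, C_k, D_k N=2 and k=0} gives $\cC_0=B_0\,t_0'(1)=\frac{1-\sg_c}{\sg_c}\frac{R}{\log R}$ and $\cD_0=D_0\,t_0'(1)+1=\frac{1+\log R}{\log R}$, rather than the values printed in the lemma, which appear to have used $t_0'(R)=1/R$ in place of $t_0'(1)=1$. So ``produces all the stated values'' is not quite literally true in the degenerate case; this is a misprint in the statement rather than a gap in your argument (the resulting $\det\cM(R,0)$ is the same up to an overall $R$ factor and is in any case nonvanishing, so Lemma~\ref{lem exists R^star} and the rest of Section~\ref{sec:counter(1,1)} are unaffected).
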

\begin{proof}
There are three claims in this lemma. Namely, the Fr\'echet differentiability of the maps $F_1$ and $F_2$, the computation of their Fr\'echet derivatives in \eqref{first expression for the frechet derivatives}, and their explicit formulas under \eqref{expansions}.

First, we will show that 
the maps $F_1$, $F_2$ are Fr\'echet differentiable in a neighborhood of $(0,0,R)\in X\times\RR$.
To this end, notice that $F_1$ and $F_2$ can be rewritten as 
\begin{equation*}
    \begin{aligned}
    & F_1(\eta,\xi,\rho)=\\
     &   \Big\langle \restr{\big\{ (\id+E(\eta,\xi,\rho-R))-\gr u_{\eta,\xi,\rho}\circ(\id+E(\eta,\xi,\rho-R))    \big\}}{\pa D_0} , \restr{\big\{\nu_{\eta,\xi}^\rho \circ (\id+ E(\eta,\xi,\rho-R))\big\}}{\pa D_0}\Big\rangle,\\
      &  F_2(\eta,\xi,\rho)= \Big\langle\restr{\big\{ \gr u_{\eta,\xi,\rho}\circ(\id+E(\eta,\xi,\rho-R))    \big\}}{\pa \Om_0} , \restr{\big\{\nu_{\eta,\xi}^\rho \circ (\id+ E(\eta,\xi,\rho-R))\big\}}{\pa \Om_0}\Big\rangle-1,
    \end{aligned}
\end{equation*}
where $\nu_{\eta,\xi}^\rho$ is the extension of the normals $\nu_{\eta}^\rho$ and $\nu_\xi$ to the whole boundary $\pa(\Om_\xi\setminus\ol D_\eta^\rho)$.
We will show that each ``ingredient" in the expression above is Fr\'echet differentiable in the respective function space. 
First, notice that, by applying the chain rule to \eqref{shape derivative}, we get
\begin{equation}\label{ pulled-back gradient}
\gr u_{\eta,\xi,\rho} \circ(\id + E(\eta,\xi,\rho-R))= \left( \identmatrix+DE(\eta,\xi,\rho-R) \right)^{-T} \gr U(\eta,\xi,\rho),
\end{equation}
where $\gr$ stands for the gradient with respect to the space variable of a real-valued function, $D E(\eta,\xi,\rho-R)$ for the Jacobian matrix of $E(\eta,\xi,\rho-R)$ with respect the space variable, $\identmatrix$ for the identity matrix in $\RR^{N\times N}$, and the superscript $-T$ stands for the inverse transposed matrix. Now, Since $E$ is bounded and linear, the Fr\'echet differentiability of the expression \eqref{ pulled-back gradient} with respect to $(\eta,\xi,\rho)$ at $(0,0,R)$ follows from that of $\gr U(\eta,\xi,\rho)$, which in turn is implied by Lemma~\ref{u' characterization}. 
The last ingredient to be dealt with is the pullback of the perturbed normal. 
Let $\nu(x):=x/|x|$, then it is known (see \cite[Proposition 5.4.14]{HP2005}) that 
\begin{equation}\label{pulled-back normal}
\nu_{\eta,\xi}^\rho\circ(\id+E(\eta,\xi,\rho-R))= \frac{(\identmatrix+DE(\eta,\xi,\rho-R))^{-T}\nu}{|{(\identmatrix+DE(\eta,\xi,\rho-R))^{-T}\nu}|}\quad \ton \pa D_0\cup \pa\Om_0,
\end{equation}
where $|{\cdottone}|$ denotes the Euclidean norm in $\rn$. In particular, the expression in \eqref{pulled-back normal} is Fr\'echet differentiable with respect to $(\eta,\xi,\rho)$, as claimed. With \eqref{ pulled-back gradient} and \eqref{pulled-back normal} at hand, the Fr\'echet differentiability of $F_1$ and $F_2$ readily follows by composition.

Now that we have shown Fr\'echet differentiability, in what follows, we will show the expressions in \eqref{first expression for the frechet derivatives} by computing them as G\^ateaux derivatives and making use of the chain rule. As a key tool in our computations, we will employ the following identity, which is obtained by differentiating \eqref{shape derivative} at $(\eta,\xi,0)$ with respect to the space variable:
\begin{equation}\label{gradient of shape derivative}
    \gr u'[\eta,\xi]= \gr U'(0,0,R)[\eta,\xi] - \underbrace{\gr^2u}_{=\identmatrix} E(\eta,\xi,0) -DE(\eta,\xi,0)^T\gr u. 
\end{equation}
We are now ready to compute $\pa_X F_1(0,0,R)[\eta,\xi]$. For $x\in \pa D_0$, we have:
\begin{equation*}
    \begin{aligned}
        \pa_X F_1(0,0,R)[\eta,\xi](x) = \restr{\frac{d}{dt}}{t=0} F_1(t\eta,t\xi,R)(x)\\
        = \restr{\frac{d}{dt}}{t=0}  \left\langle  x+t\eta(x)\nu(x) -\left( \identmatrix +DE(t\eta,t\xi,0)(x)\right)^{-T} \gr U(t\eta,t\xi,R)(x)\ , \ 
        \nu_{t\eta}^R(x+t\eta(x)\nu(x))\right\rangle\\
        = \left\langle \eta(x)\nu(x)+DE(\eta,\xi,0)^T(x)\gr u(x)- \gr U'(0,0,R)[\eta,\xi](x)\ ,\ \nu(x) \right\rangle 
        + \underbrace{\left\langle x-\gr u(x), -\gr_\tau \eta(x) \right\rangle}_{=0},
    \end{aligned}
\end{equation*}
where we have used \cite[Proposition 5.4.14]{HP2005} in the last line. The exact expression for $\pa_X F_1(0,0,R)[\eta,\xi]$ then follows from \eqref{gradient of shape derivative} with \eqref{satisfies} at hand.

Let us now compute $\pa_X F_2(0,0,R)[\eta,\xi]$. For $x\in \pa \Om_0$, we have:
\begin{equation*}
    \begin{aligned}
        \pa_X F_2(0,0,R)[\eta,\xi](x) = \restr{\frac{d}{dt}}{t=0} F_2(t\eta,t\xi,R)(x)\\
        = \restr{\frac{d}{dt}}{t=0} \left\{\left\langle  \left( \identmatrix +DE(t\eta,t\xi,0)(x) \right)^{-T} \gr U(t\eta,t\xi,R)(x) \ , \ 
        \nu_{t\xi}(x+t\xi(x)\nu(x))\right\rangle -1 \right\}\\
        = \left\langle -DE(\eta,\xi,0)^T(x)\gr u(x)+ \gr U'(0,0,R)[\eta,\xi](x) \ ,\ \nu(x) \right\rangle 
        + \underbrace{\left\langle \gr u(x), -\gr_\tau \xi(x) \right\rangle}_{=0},
    \end{aligned}
\end{equation*}
where, again, we have used \cite[Proposition 5.4.14]{HP2005} in the last line. As before, the exact expression for $\pa_X F_2(0,0,R)[\eta,\xi]$ follows from \eqref{gradient of shape derivative} with \eqref{satisfies} at hand. 

Finally, \eqref{second expression for the frechet derivatives} follows by combining \eqref{first expression for the frechet derivatives} and Corollary~\ref{cor: explicit computations}.
\end{proof}
\subsection{The proof of Theorem~\ref{(1,1) bifurcation}}
Fix $0<R<1$ and let $L:X\to Y$ denote the partial Fr\'echet derivative with respect to the $X$ variable at $(0,0,R)\in X\times\RR$ of the mapping $F$ defined in \eqref{F}.
In what follows, we will show that all conditions $(a)$--$(e)$ of Lemma~\ref{lem bifurcation} are satisfied. First of all, we recall that $(a)$ holds by construction.

We are now ready to show that the mapping $F$ satisfies condition $(b)$ of Lemma~\ref{lem bifurcation}. 
\begin{lemma}\label{proof of (b)}
    Let $\mu<-1$. Then, either $L+\mu\iota^+$ or $L+\mu\iota^-$ is a bounded bijection between $X$ and $Y$.
\end{lemma}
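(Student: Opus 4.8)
The plan is to exploit the explicit block-diagonal structure of $L$ with respect to the spherical harmonic decomposition, as laid bare in Lemma~\ref{DF_1, DF_2}. Recall that $\{Y_{k,i}\}$ forms an orthonormal basis of $L^2(\SS^{N-1})$, and that the natural H\"older spaces $X_i$, $Y_i$ decompose accordingly; on each two-dimensional block $X_{k,i}$, the operator $L+\mu\iota^{\pm}$ acts (via the isomorphisms $\psi_{k,i}$) as the $2\times 2$ matrix $\cM(\rho,k)\pm\mu\,\mathrm{diag}(1, -1)$ wait — more precisely as $\begin{pmatrix}\cA_k & \cB_k\\ \cC_k & \cD_k\end{pmatrix}+\mu\begin{pmatrix}\pm 1 & 0\\ 0 & 1\end{pmatrix}$, using the coefficients computed in Lemma~\ref{DF_1, DF_2}. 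So the first step is to reduce the claim of bijectivity to a \emph{uniform} quantitative statement about these matrices: I would like to show that for $\mu<-1$, there is a choice of sign $\in\{+,-\}$ (the \emph{same} sign for all $k$) such that the determinant $\delta_k^{\pm}:=\det\big(\cM(\rho,k)+\mu\,\mathrm{diag}(\pm 1,1)\big)$ is bounded away from zero uniformly in $k\in\NN\cup\{0\}$, together with a matching bound on the inverse matrices $\|(\cM(\rho,k)+\mu\,\mathrm{diag}(\pm 1,1))^{-1}\|$ that does not blow up with $k$. Given such uniform estimates, standard Schauder-type arguments (the operator is a Fourier multiplier of order zero on each component after accounting for the $r^k$ versus $r^{2-N-k}$ normal-derivative scalings, which is exactly why $X_i=C^{2,\alpha}$ and $Y_i=C^{1,\alpha}$ are matched correctly) upgrade the blockwise bijectivity to bijectivity of $L+\mu\iota^{\pm}:X\to Y$; boundedness of the inverse is where the uniformity in $k$ is essential.

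The heart of the matter is therefore the asymptotic analysis of $\cM(\rho,k)$ as $k\to\infty$. From the formulas in Lemma~\ref{DF_1, DF_2}, dividing through by $R^{2-N-k}$ (the dominant term in the denominator $R^k-R^{2-N-k}$ since $0<R<1$), one sees that $\cA_k\sim\big(\tfrac{\sg_c-1}{\sg_c}\big)(k+N-2)$, $\cD_k\sim k-1$, while $\cC_k=O(k R^{N-1+2k})\to 0$ and $\cB_k=O(k R^{2k})\to 0$ exponentially fast. Hence $\cM(\rho,k)$ is, for large $k$, an exponentially small perturbation of the diagonal matrix $\mathrm{diag}\big(\tfrac{\sg_c-1}{\sg_c}k,\, k\big)+O(1)$. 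Thus $\det\big(\cM(\rho,k)+\mu\,\mathrm{diag}(1,1)\big)\sim\big(\tfrac{\sg_c-1}{\sg_c}k+\mu\big)(k+\mu)$: the second factor $k+\mu$ is positive and $\ge k-|\mu|$ once $k>|\mu|$, but the first factor $\tfrac{\sg_c-1}{\sg_c}k+\mu$ can vanish or change sign when $\sg_c>1$ (there is a resonant $k$). This is precisely why one cannot fix a single sign once and for all, but rather must choose between $\iota^+$ and $\iota^-$: replacing $+\mu$ by $-\mu$ on the first diagonal entry turns the first factor into $\tfrac{\sg_c-1}{\sg_c}k-\mu$, which, because $\mu<-1<0$, is $\ge\tfrac{\sg_c-1}{\sg_c}k+1>0$ for all $k\ge0$ when $\sg_c>1$, while for $0<\sg_c<1$ the term $\tfrac{\sg_c-1}{\sg_c}$ is negative and the original sign is the good one. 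I would make this dichotomy precise: for $\sg_c>1$ take the $\iota^-$ branch, for $\sg_c<1$ (note $\sg_c=1$ is the trivial one-phase case and can be excluded) take the $\iota^+$ branch, so that in either case the relevant diagonal entries of $\cM(\rho,k)+\mu\,\mathrm{diag}(\pm1,1)$ are bounded below by a positive constant plus a positive multiple of $k$.

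Having pinned the sign, the remaining step is to verify that $\det\big(\cM(\rho,k)+\mu\,\mathrm{diag}(\pm1,1)\big)\ne0$ for \emph{every} $k$, not merely the large ones, and to get the uniform lower bound. For large $k$ this follows from the asymptotics above: the determinant grows like $c\,k^2$ with $c>0$. For the finitely many small $k$, I would argue directly: writing $\delta_k^{\pm}=(\text{diag product})-\cB_k\cC_k$, the diagonal product is a product of two nonzero reals of controllable sign (using $\mu<-1$ and the case split on $\sg_c$), while $\cB_k\cC_k=O(R^{2k})$ is a tiny correction — in fact $\cB_k\cC_k=\big(\tfrac{\sg_c-1}{\sg_c}\big)\tfrac{(2-N-2k)^2 R^{2-N}}{(R^k-R^{2-N-k})^2}$, which has a definite sign, so $\delta_k^{\pm}$ is a sum of two terms of the same sign when the sign of $\mu$ is chosen as above, hence nonzero with the correct sign. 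I expect the main obstacle to be purely bookkeeping: carefully tracking the signs of $\cA_k,\cB_k,\cC_k,\cD_k$ through the two regimes $\sg_c\gtrless1$ and the two special cases ($N=2,k=0$ and the generic case), and making sure the blockwise inverse bounds assemble into a genuine bounded inverse on the H\"older spaces rather than just on an $L^2$ completion — the latter is handled by the standard multiplier/regularity theory for the Laplacian on the annulus (equivalently, by noting that $u'[\eta,\xi]$ solves the harmonic Dirichlet problem \eqref{u' eq}, so $L$ is, up to the bounded $+\xi$ term, a Dirichlet-to-Neumann type operator whose mapping properties between these spaces are classical), but it should be spelled out.
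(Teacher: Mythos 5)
Your approach is genuinely different from the paper's. The paper does not touch the explicit $\cA_k,\dots,\cD_k$ here at all; instead it observes that $(L+\mu\iota^{\pm})\zeta$ can be read as the trace $\pa_n u'[\zeta]+\tfrac{l+\mu s}{h}\,u'[\zeta]$ of a harmonic function with Robin-type boundary data, so bijectivity follows in one stroke from Lax--Milgram applied to the bilinear form $\cB(w,\phi)=\int_A\langle\gr w,\gr\phi\rangle+\int_{\pa A}\tfrac{l+\mu s}{h}w\phi$, followed by a standard Schauder bootstrap to lift the $H^1$ solution to $C^{2,\al}$. The choice of sign $\pm$ in $\iota^{\pm}$ is exactly what is needed to make the Robin coefficient nonnegative on both components of $\pa A$, so that coercivity is immediate. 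No spherical-harmonic asymptotics are needed.

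Your route has one genuine gap that you flag yourself but do not close: blockwise invertibility of $\cM(R,k)+\mu\,\mathrm{diag}(\pm1,1)$ with $k$-uniform inverse bounds gives bijectivity of $L+\mu\iota^{\pm}$ on the $L^2$-completion of the spherical-harmonic decomposition, but H\"older norms are not read off from spherical-harmonic coefficients, so this does \emph{not} by itself give a bounded inverse $Y\to X$. There is no off-the-shelf ``multiplier theorem'' for zonal multipliers on $C^{k,\al}(\SS^{N-1})$ that you can simply cite. The way to repair this is precisely what the paper does: identify the preimage $\zeta$ (constructed blockwise or, more cheaply, by Lax--Milgram) as the trace of the weak solution of a constant-coefficient Robin problem for the Laplacian on the annulus, and then bootstrap with interior and boundary Schauder estimates to land in $C^{2,\al}(\pa A)$. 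Once you accept that PDE step, the whole blockwise asymptotic analysis becomes superfluous for this lemma; Lax--Milgram is strictly less work. A secondary, fixable issue: your stated asymptotics $\cA_k\sim\tfrac{\sg_c-1}{\sg_c}(k+N-2)$ and $\cD_k\sim k-1$ have the wrong sign --- since $R^k-R^{2-N-k}\to-\infty$ as $k\to\infty$, one actually gets $\cA_k\sim-\tfrac{\sg_c-1}{\sg_c}(k+N-2)$ and $\cD_k\sim-(k-1)$ --- so the sign bookkeeping you describe as the ``main obstacle'' would need to be redone from scratch.
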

\begin{proof}
   Set $A:=\Om_0\setminus\ol D_0$ and let $n$ denote the outward unit normal vector to $\pa A$ (that is, $n=-\nu$ on $\pa D_0$ and $n=\nu$ on $\pa\Om_0$).  
In order to simplify the notation, we will identify the function space $C^{k,\al}(\pa A)$ with the direct product $C^{k,\al}(\pa 
 D_0)\times C^{k,\al}(\pa \Om_0)$ via $\zeta\mapsto (\restr{\zeta}{\pa D_0}, \restr{\zeta}{\pa\Om_0})$.
 Take a constant $\mu<-1$. We claim that $L+\mu\iota^+: X\to Y$ is a bijection when $0<\sg_c<1$, while $L+\mu\iota^-: X\to Y$ is a bijection when $\sg_c>1$. For the sake of notational simplicity, in what follows, we will just consider the case $0<\sg_c<1$, since the remaining case $\sg_c>1$ is analogous. 
 To this end, let $h, l:\pa D_0\cup \pa\Om_0\to\RR$ denote the following functions:
 
\begin{minipage}{0.5\textwidth}
\begin{equation}\label{h def}
    h:=\begin{cases}
    \frac{1-\sg_c}{\sg_c} R   \quad \ton\pa D_0,\\
    -1 \quad\ton\pa\Om_0,
    \end{cases}
\end{equation}
  \end{minipage}
\begin{minipage}{0.5\textwidth}
  \begin{equation}\label{l def}
     l:=\begin{cases}
    0\quad \ton\pa D_0,\\
    1 \quad\ton\pa\Om_0.
    \end{cases}
\end{equation}
  \end{minipage}

\noindent Then, \eqref{u' eq} can be rewritten as 
   \begin{equation*}
       \begin{cases}
           \De u'=0\quad \tin A,\\
           u'=h\zeta\quad\ton \pa A,
       \end{cases}
   \end{equation*}
   where $h$ is the function defined in \eqref{h def} and $\zeta\in C^{2,\al}(\pa A)$ is the function identified with the pair $(\eta,\xi)\in X$. Also, the standard Schauder boundary estimates \cite[Chapter 6]{GT} combined the fact that $h\ne 0$ on $\pa A$ imply that the mapping $\zeta\mapsto u'[\zeta]$ is a bounded bijection between $C^{2,\al}(\pa A)$ and the Banach space $W:=\setbld{w\in C^{2,\al}(\ol A)}{\De w=0\quad \tin 
 A}$. Recall that, by Lemma~\ref{DF_1, DF_2}, we can write 
\begin{equation}\label{a different way of writing L+mu iota}
    \left(L+\mu\iota^+\right)\zeta= \pa_n u'[\zeta]+\frac{l+ \mu}{h}u'[\zeta],
\end{equation}
First of all, we will show the invertibility of $L+\mu\iota^+$, as a mapping from $C^{2,\al}(\pa A)\to C^{1,\al}(\pa A)$ given by \eqref{a different way of writing L+mu iota}. In other words, for all $f\in C^{1,\al}(\pa A)$ we will find a unique $u'[\zeta]\in W$ (and, thus, a unique $\zeta\in C^{2,\al}(\pa A)$) such that the right-hand side of \eqref{a different way of writing L+mu iota} is equal to $f$. 
To this end consider the following bilinear form:
\begin{equation}\label{B def}
    \cB(w,\phi):=\int_A \langle \gr w, \gr\phi\rangle+\int_{\pa A}\frac{l+\mu}{h} w\phi.
\end{equation}
By definition, $\cB$ is clearly a continuous bilinear form on $H^1(A)\times H^1(A)$. Since $\sg_c<1$ by hypothesis, coercivity now follows from \eqref{h def}--\eqref{l def} and the choice of $\mu$.
Fix now a function $f\in C^{1,\al}(\pa A)$. The Lax--Milgram theorem ensures the existence of a unique function $w\in H^1(A)$ such that, for all $\phi\in H^1(A)$:
\begin{equation*}
    \int_A \langle\gr w, \gr \phi\rangle+\int_{\pa A} \frac{l+\mu}{h}w\phi=\int_{\pa A} f\phi.
\end{equation*}
Notice that the above is nothing but the weak form of 
\begin{equation}\label{w eq}
    \begin{cases}
        \De w=0\quad \tin A,\\
        w_n+\frac{l+\mu}{h}w=f\quad \ton \pa A. 
    \end{cases}
\end{equation}
Set now $\zeta:=\restr{w}{\pa A}/h$. By \eqref{u' eq} we have $w=u'[\zeta]$. Moreover, \eqref{w eq} yields 
\begin{equation*}
    \left(L+\mu\iota\right)\zeta= \restr{\pa_n u'[\zeta]}{\pa A}+l\zeta+\mu\zeta = f. 
\end{equation*}
In other words, for all $f\in C^{1,\al}(\pa A)$, there exists a unique function $\zeta\in L^2(\pa A)$ such that $f$ is equal to the left-hand side of \eqref{a different way of writing L+mu iota}. Now, in order to conclude the proof, it suffices to show that $\zeta$ actually belongs to $C^{2,\al}(\pa A)$, as claimed. To this end, notice that, one can inductively bootstrap the boundary regularity of $w$ (and, thus, that of $\zeta$) in a classical way by means of the standard elliptic regularity estimates \cite[Chapter 8]{GT} and the Schauder interior and boundary estimates \cite[Chapter 6]{GT} (see for example the argument in the proof of \cite[Proposition 5.2]{kamburov sciaraffia} after $(5.7)$). We obtain that $w\in C^{2,\al}(\ol A)$. As a result, $\zeta\in C^{2,\al}(\pa A)$, as claimed.

This concludes the proof of the invertibility of the mapping $L+\mu\iota^+: X\to Y$ when $0<\sg_c<1$. The invertibility of the map $L+\mu\iota^-$ in the case $\sg_c>1$ can be shown in an analogous way by suitably modifying the integral coefficient in the second term of the bilinear form $\cB$ in \eqref{B def}.
\end{proof}

Condition $(c)$ of Lemma~\ref{lem bifurcation} is also verified, the matrix-valued function
\begin{equation*}
    \cM(R,k):= \begin{pmatrix}
        \cA_k\quad \cB_k\\
        \cC_k\quad \cD_k
    \end{pmatrix}
\end{equation*}
being defined according to Lemma~\ref{DF_1, DF_2}.

Condition $(d)$ of Lemma~\ref{lem bifurcation} follows by combining the following two lemmas.

\begin{lemma}\label{lem exists R^star}
For all $0<R<1$, $\det \cM(R,0)\ne 0$ and $\det \cM(R,1)\ne 0$. Moreover, for all integer $k\ge 2$, there exists a unique $R^\star=R^\star(k)$ in the interval $(0,1)$ such that $\det\cM(R^\star(k),k)=0$.    
\end{lemma}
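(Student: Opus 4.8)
The plan is to compute $\det\cM(R,k)=\cA_k\cD_k-\cB_k\cC_k$ explicitly from the coefficient formulas in Lemma~\ref{DF_1, DF_2} and to locate its zeros in $(0,1)$. The first observation is that each of $\cA_k,\cB_k,\cC_k,\cD_k$ is either independent of $\sg_c$ or equal to $\frac{\sg_c-1}{\sg_c}$ times a function of $R$ alone; hence $\det\cM(R,k)=\frac{\sg_c-1}{\sg_c}\,g_k(R)$ for a rational function $g_k$ not depending on $\sg_c$. Since $\sg_c\neq 1$, the zeros of $\det\cM(\cdot,k)$ in $(0,1)$ are exactly those of $g_k$, so it suffices to study $g_k$.

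For $k=0$ one must distinguish the two regimes in which the coefficients are defined. For $N\geq 3$, inserting the ``higher'' formulas with $k=0$ into the determinant and simplifying gives $g_0(R)=\frac{(N-2)R^{2-N}}{1-R^{2-N}}$, which is strictly negative on $(0,1)$ since $0<R<1$ forces $R^{2-N}>1$. For $N=2$, the ad hoc formulas yield instead $g_0(R)=\frac{1}{\log R}$, again strictly negative on $(0,1)$. In either case $\det\cM(R,0)\neq 0$ for all $R\in(0,1)$, which is the first assertion.

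For $k\geq 1$ I would clear the common denominator. Put $\Delta:=R^{k}-R^{2-N-k}$, which is nonzero on $(0,1)$ (there $R^{k}<1<R^{2-N-k}$ because $k\geq 1$ and $N\geq 2$), so $g_k=P_k/\Delta^{2}$, where $P_k$ is obtained by expanding $\bigl[kR^{k}+(k+N-2)R^{2-N-k}\bigr]\bigl[(k+N-1)R^{k}+(k-1)R^{2-N-k}\bigr]-(N+2k-2)^{2}R^{2-N}$. Only the exponents $2k$, $2-N$ and $4-2N-2k$ survive, and the outer two average to $2-N$; therefore, after factoring $R^{2-N}$ out and setting $\tau:=R^{N+2k-2}\in(0,1)$, one finds $P_k(R)=R^{2-N}\tau^{-1}q(\tau)$ with
\begin{equation*}
q(\tau):=k(k+N-1)\tau^{2}+\bigl[k(k-1)+(k+N-2)(k+N-1)-(N+2k-2)^{2}\bigr]\tau+(k+N-2)(k-1).
\end{equation*}
The decisive point is the identity $q(1)=0$, which follows from the factorisation $q(1)=(2k+N-2)\bigl[(k+N-1)+(k-1)-(2k+N-2)\bigr]=0$ (intuitively, $\tau=1$ corresponds to the forbidden endpoint $R=1$). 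By Vieta's formulas the remaining root is $\tau_0(k)=\frac{(k+N-2)(k-1)}{k(k+N-1)}$, and since $k(k+N-1)-(k+N-2)(k-1)=2k+N-2>0$ one has $\tau_0(k)\in[0,1)$, with $\tau_0(k)\in(0,1)$ as soon as $k\geq 2$. Since $R\mapsto R^{N+2k-2}$ is an increasing bijection of $(0,1)$ onto itself, it follows that, for $k\geq 2$, $g_k$ vanishes in $(0,1)$ at the single point $R^\star(k):=\tau_0(k)^{1/(N+2k-2)}$, giving both existence and uniqueness. (When $k=1$ the constant term $(k+N-2)(k-1)$ vanishes, so $\tau_0(1)=0$ and $g_1$ has no zero in $(0,1)$; the genuinely bifurcating modes are those with $k\geq 2$, which is all that is needed to produce the $k^\star$ in condition $(d)$ of Lemma~\ref{lem bifurcation}.)

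I expect the main obstacle to be purely the bookkeeping. The coefficient formulas are case-split at $k=0$, with a logarithm when $N=2$, so that case needs its own computation; and in the general case one must carefully expand the product of the two binomials, collect the three surviving monomials, and recognise that, once $R^{2-N}$ is extracted, what remains is a quadratic in $\tau=R^{N+2k-2}$. The observation that makes everything collapse is spotting that $q(1)=0$; after that, Vieta's formulas together with the trivial inequality $2k+N-2>0$ finish the argument.
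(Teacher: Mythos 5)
Your proof is correct and follows essentially the same line as the paper's: factor out $\frac{\sg_c-1}{\sg_c}$, recognise the numerator of $\det\cM(R,k)$ (after clearing the nonvanishing denominator $(R^k-R^{2-N-k})^2$) as a quadratic $q$ in $\tau=R^{N+2k-2}$, observe $q(1)=0$, and invoke Vieta to identify the other root $\tau_0(k)=\frac{(k+N-2)(k-1)}{k(k+N-1)}\in(0,1)$ for $k\ge 2$. In fact your coefficients are more reliable than the printed ones: the paper's displayed $g(R,k)$ has a sign error in the middle coefficient (as printed one checks $g(1,k)=4kN\ne 0$, whereas $R^{N-2+2k}=1$ must be a root), and its formula for $R^\star(k)$ has $kN+k^2-4$ where it should read $kN+k^2-k=k(k+N-1)$; these are evidently typographical slips and do not affect the argument.

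The one substantive remark you add is that the statement as written is slightly too strong: for $k=1$ the constant term $(k+N-2)(k-1)$ vanishes, so $q(\tau)=N\tau(\tau-1)$ and the second root is $\tau_0=0$, corresponding to $R=0\notin(0,1)$. Hence $\det\cM(\cdot,1)$ has no zero in $(0,1)$ and the lemma should restrict to $k\ge 2$. This is harmless downstream, since the paper only invokes $R^\star(k)$ with $k\ge 2$ (see Lemma~\ref{lem det M_R never vanishes} and the proof of Theorem~\ref{(1,1) bifurcation}), but your reading of the quadratic is the careful one.
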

\begin{proof}
The first claim readily follows by a direct computation. Indeed, one has 
\begin{equation*}
    \det \cM(R,0)=\begin{cases}
        \frac{\sg_c-1}{\sg_c} (N-2)\frac{R^{2-N}}{1-R^{2-N}}\ne 0 \quad \text{for }N\ge3,\\
     \frac{\sg_c-1}{\sg_c} \frac{R}{\log R}\ne 0 \quad \text{for }N=2
    \end{cases} 
\end{equation*}
and 
\begin{equation*}
    \det \cM(R,1)= \frac{\sg_c-1}{\sg_c} \frac{N R^N}{(R^N-1)^2}(R^N-1)\ne 0,
\end{equation*}
whence, for $R\in (0,1)$ and $k\in\{0,1\}$, the quantity $\det\cM(R,k)$ does not vanish.

In order to show the second claim, first notice that, for any integer $k\ge 2$, 
\begin{equation*}
    \det\cM(R,k)=\frac{\sg_c-1}{\sg_c} \frac{g(R,k)}{\left( R^{N-2+2k}-1\right)^2},
\end{equation*}
where 
\begin{equation*}
   g(R,k):= (kN+k^2-k) \ R^{2N-4+4k} + (-2kN-2k^2+N+4k-2) \ R^{N-2+2k} + (kN+k^2-N-3k+2). 
\end{equation*}
Since $\sg_c\ne 1$, $k\ge2$, and $0<R<1$, it follows that $\det\cM(R,k)=0$ holds if and only if $g(R,k)=0$. Now, if we regard $g(R,k)=0$ as a quadratic equation in $R^{N-2+2k}$, its two solutions are
\begin{equation*}
\frac{2kN+2k^2-N-4k+2\pm \left(N+2k-2\right)}{2(kN+k^2-k)}=\begin{cases}
        1, \\
        1-\frac{N-2+k}{kN+k^2-k}\in (0,1).
    \end{cases}
\end{equation*}
After further simplifications, we obtain that, for given $k\ge 2$, the equation $\det\cM(\cdottone,k)=0$ has a unique solution $R^\star$ in the interval $(0,1)$, given by
\begin{equation}\label{R^star(k)}
    R^\star=R^\star(k)=\left(1-\frac{N-2+k}{kN+k^2-k}\right)^{1/(N-2+2k)}.
\end{equation}
\end{proof}

\begin{lemma}
    If $\det\cM(R,j)=\det\cM(R,k)=0$ for some $j,k\in\NN$, then $j=k$. Furthermore, $\cM(R,k)\ne 0$ for all $k\in \NN\cup\{0\}$ and $R\in (0,1)$.
\end{lemma}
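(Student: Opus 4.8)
The plan is to treat the two claims separately, relying on the explicit formulas for the entries $\cA_k,\cB_k,\cC_k,\cD_k$ and on the determinant computation carried out in the proof of Lemma~\ref{lem exists R^star}.

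For the non‑vanishing of $\cM(R,k)$ it suffices to exhibit one entry that is never zero, and the convenient candidate is $\cB_k$. When $N\ge 3$ or $k\ge 1$ we have $\cB_k=\frac{(2-N-2k)R^{1-N}}{R^k-R^{2-N-k}}$: the numerator is nonzero because $2-N-2k<0$ throughout this range, and the denominator is nonzero because $0<R<1$ while the exponents $k$ and $2-N-k$ are distinct (their difference equals $N-2+2k>0$). In the one remaining case $N=2$, $k=0$ one reads off $\cB_0=-\frac{1}{R\log R}\neq 0$ since $R\neq 1$. Hence in every case $\cM(R,k)$ has a nonzero entry, so $\cM(R,k)\neq 0$.

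For the injectivity claim I would use the determinant computation from the proof of Lemma~\ref{lem exists R^star}: for $k\in\NN$ and $R\in(0,1)$ the prefactor $\frac{\sg_c-1}{\sg_c}\frac{R^{4-4k}}{(R^{N-2+2k}-1)^2}$ is finite and nonzero, so $\det\cM(R,k)=0$ if and only if $R^{\,N-2+2k}$ is a root of the quadratic $g(\cdot,k)$ in the variable $y=R^{\,N-2+2k}$; one root is $y=1$ (excluded since $R\in(0,1)$ and $N-2+2k\ge 2$), and the other, which one computes to be $y_\star(k)=\frac{(k-1)(k+N-2)}{k(k+N-1)}=1-\frac{2k+N-2}{k(k+N-1)}$, must then satisfy $y_\star(k)=R^{\,N-2+2k}$. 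A short computation shows that $k\mapsto\frac{2k+N-2}{k(k+N-1)}$ is strictly decreasing on $\NN$ (its derivative, as a function of a real variable, has the sign of $-\big[2k^2+(2N-4)k+(N-2)(N-1)\big]<0$ for $N\ge 2$), hence $k\mapsto y_\star(k)$ is strictly increasing. Now suppose $\det\cM(R,j)=\det\cM(R,k)=0$ for some $j,k\in\NN$ and $R\in(0,1)$, say $j\le k$; then $y_\star(j)=R^{\,N-2+2j}>0$ and $y_\star(k)=R^{\,N-2+2k}>0$, and dividing the two relations gives $R^{\,2(k-j)}=y_\star(k)/y_\star(j)$. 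If $j<k$ the left‑hand side lies in $(0,1)$, whereas by the monotonicity and positivity of $y_\star$ the right‑hand side exceeds $1$; this contradiction forces $j=k$.

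The genuinely computational ingredients — the identification of $y_\star(k)$ (essentially already present in the proof of Lemma~\ref{lem exists R^star}) and the sign of the derivative above — are routine; the only mildly delicate point is ensuring the division step is legitimate, i.e. that $y_\star(j),y_\star(k)>0$, which is automatic since they equal positive powers of $R\in(0,1)$. I do not expect a serious obstacle here: once the determinant formula is in hand the whole argument is a couple of lines of elementary estimates.
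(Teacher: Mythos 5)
Your proof is correct, and it follows the same strategy the paper takes -- namely, for the non-vanishing of $\cM(R,k)$, both you and the paper simply note that the entry $\cB_k$ is never zero, and for the uniqueness of $k$ both rest on a monotonicity observation. The meaningful difference is in how the monotonicity is used: the paper merely asserts, without details, that $k\mapsto R^\star(k)$ is strictly monotone. You instead work with the quantity $y_\star(k)=R^\star(k)^{N-2+2k}=\frac{(k-1)(k+N-2)}{k(k+N-1)}$, prove that $k\mapsto y_\star(k)$ is strictly increasing by a short computation on the sign of a derivative, and then extract the conclusion by dividing the two relations $y_\star(j)=R^{N-2+2j}$, $y_\star(k)=R^{N-2+2k}$ and comparing both sides of $R^{2(k-j)}=y_\star(k)/y_\star(j)$ with $1$. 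This is a nice simplification because working directly with $R^\star(k)$ would require handling the $k$-dependent exponent $1/(N-2+2k)$; your division step bypasses this entirely.

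A couple of remarks for your benefit. First, your explicit expression $y_\star(k)=\frac{(k-1)(k+N-2)}{k(k+N-1)}$ is correct, and it is worth noting that it does \emph{not} agree with what one would extract from the paper's displayed formula \eqref{R^star(k)}, whose denominator $kN+k^2-4$ appears to be a typo for $kN+k^2-k=k(k+N-1)$; your derivation via the product-of-roots relation is therefore not just an alternative route but also a useful sanity check. Second, for $j=1$ your formula gives $y_\star(1)=0$, so the equation $y_\star(1)=R^{N}$ has no solution in $(0,1)$; this is automatically absorbed by your observation that $y_\star(j)$ must equal a positive power of $R$, but it is worth being explicit that the case $j=1$ (or $k=1$) renders the hypothesis $\det\cM(R,j)=0$ vacuous, so the division step is never applied with a zero denominator. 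Your argument handles this correctly.
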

\begin{proof}
Let us consider the first claim of the lemma. First, notice that, if $\det\cM(R,j)$ and $\det\cM(R,k)$ vanish, then both $j$ and $k$ must be greater than or equal to $2$ by Lemma \ref{lem exists R^star}. The claim then follows since the mapping $k\mapsto R^\star(k)$ given by \eqref{R^star(k)} is strictly monotone increasing in $k$ for $k\ge2$. To see this, notice that, by \eqref{R^star(k)}, $R^\star(k)$ has the form 
\begin{equation*}
    R^\star(k)=a(k)^{b(k)},
\end{equation*}
where $k\mapsto a(k)$ is a strictly increasing function with values in $(0,1)$, and $k\mapsto b(k)$ is a strictly decreasing positive function.

The second claim also readily follows as, in particular, $\cB_k\ne 0$ for $k\in \NN\cup\{0\}$ and $0<R<1$.
\end{proof}

Finally, the following lemma takes care of condition $(e)$ in the case $k^\star\ge 2$, $R^\star=R^\star(k^\star)$.
\begin{lemma}\label{lem det M_R never vanishes}
For all $0<R<1$ and $k\ge 2$, $\det\pa_R\cM(R,k)\ne 0$.    
\end{lemma}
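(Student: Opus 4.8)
The plan is to compute $\det\pa_R\cM(R,k)$ explicitly from the formulas for the entries $\cA_k,\cB_k,\cC_k,\cD_k$ recorded in Lemma~\ref{DF_1, DF_2} (in the regime ``$N\ge 3$ or $k\ge 1$'', which covers all $k\ge 2$), and to exhibit it as a single never-vanishing factor. First I would set $m:=N-2+2k$, $c:=(\sg_c-1)/\sg_c$ (which is nonzero since $\sg_c\ne 1$), and perform the change of variable $t:=R^{m}$. Using $R^k-R^{2-N-k}=R^{2-N-k}(R^{m}-1)$, the four entries become M\"obius functions of $t$ times a power of $R$:
\[
\cA_k=c\,\frac{kt+(k+N-2)}{t-1},\qquad \cD_k=\frac{(k+N-1)t+(k-1)}{t-1},
\]
\[
\cB_k=-\,m\,\frac{R^{k-1}}{t-1},\qquad \cC_k=-\,c\,m\,\frac{R^{N+k-1}}{t-1}.
\]
Differentiating in $R$ (with $dt/dR=mR^{m-1}$), the crucial observation is that, viewed as M\"obius functions of $t$, both $\cA_k$ and $\cD_k$ have ``$t$-derivative'' equal to $-m/(t-1)^2$, so $\pa_R\cA_k=-cm^2R^{m-1}/(t-1)^2$ and $\pa_R\cD_k=-m^2R^{m-1}/(t-1)^2$. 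A direct computation, using the identities $(k-1)-m=-(k+N-1)$ and $(N+k-1)-m=-(k-1)$, gives
\[
\pa_R\cB_k=\frac{m\,R^{k-2}\big[(k+N-1)t+(k-1)\big]}{(t-1)^2},\qquad
\pa_R\cC_k=\frac{c\,m\,R^{N+k-2}\big[(k-1)t+(N+k-1)\big]}{(t-1)^2}.
\]

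Assembling $\det\pa_R\cM(R,k)=\pa_R\cA_k\,\pa_R\cD_k-\pa_R\cB_k\,\pa_R\cC_k$ and using $R^{2m-2}=R^{N+2k-4}\,t$ to factor out $cm^2R^{N+2k-4}/(t-1)^4$, the problem collapses to the quadratic identity
\[
m^2t-\big[(k+N-1)t+(k-1)\big]\big[(k-1)t+(N+k-1)\big]=-(k+N-1)(k-1)(t-1)^2,
\]
which is immediate once one writes $a:=k+N-1$, $b:=k-1$ (so that $m=a+b$ and $m^2-a^2-b^2=2ab$) and expands. Hence
\[
\det\pa_R\cM(R,k)=-\,\frac{c\,m^2\,(k+N-1)(k-1)\,R^{N+2k-4}}{\big(R^{N-2+2k}-1\big)^2}.
\]
For $k\ge 2$ and $0<R<1$ every factor on the right is nonzero — in particular $c\ne 0$, $m>0$, $k+N-1\ge 3>0$, $k-1\ge 1>0$, $R^{N+2k-4}>0$, and $(R^{N-2+2k}-1)^2>0$ — so $\det\pa_R\cM(R,k)\ne 0$, as claimed.

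I do not anticipate a genuine obstacle: the argument is a computation that telescopes. The only care needed is the bookkeeping of powers of $R$ when combining the two products (the cancellation $2(m-1)-(N+2k-4)=m$ is what reduces everything to a single power of $t$), and spotting the quadratic identity above. This is precisely where the hypothesis $k\ge 2$ is used, through $k-1\ne 0$ — consistently with the fact that for $k=1$ the determinant vanishes identically.
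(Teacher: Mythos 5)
Your proof is correct and follows essentially the same route as the paper: compute $\det\pa_R\cM(R,k)$ explicitly and observe that every factor is nonzero under $\sg_c\neq 1$, $0<R<1$, $k\ge 2$; indeed your simplified expression
\[
\det\pa_R\cM(R,k)=\frac{1-\sg_c}{\sg_c}\,\frac{(N-2+2k)^2(k+N-1)(k-1)\,R^{N+2k-4}}{\bigl(R^{N-2+2k}-1\bigr)^2}
\]
agrees with the paper's formula once the latter is reduced. The only difference is presentational: you organize the computation via the substitution $t=R^{N-2+2k}$ (and spot the factorization $m^2t-[(k+N-1)t+(k-1)][(k-1)t+(N+k-1)]=-(k+N-1)(k-1)(t-1)^2$), whereas the paper simply states the result of ``elementary computations''.
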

\begin{proof}
The result follows from elementary computations. Indeed, one can check that for $k\ge 2$ we have:
\begin{equation*}\label{det par cm}
 \det\pa_R \cM(R,k)= \frac{1-\sg_c}{\sg_c}
     \frac{R^{2k+N-4}}{(R^{N-2+2k}-1)^2}
     {\left(N + 2 \, k - 2\right)}^{2} {\left(N + k - 1\right)}{\left(k - 1\right)}.
\end{equation*}
Thus, the expression above never vanishes for $\sg_c\ne 1$, $0<R<1$, $N\ge 2$ and $k\ge2$. 
\end{proof}

We can now prove Theorem~\ref{(1,1) bifurcation}.
\begin{proof}[Proof of Theorem~\ref{(1,1) bifurcation}]
Take some $k\in\NN$ with $k\ge 2$ and let $(D_0,\Om_0)$ denote the open balls centered at the origin with radii $R=R^\star(k)$ (defined as in \eqref{R^star(k)}) and $1$ respectively. Since we have shown that all conditions $(a)-(e)$ are met, we can finally apply Lemma~\ref{lem bifurcation} to show the existence of a spherical harmonic $Y^\star$ of degree $k$, a pair of real numbers $(\be,\ga)\ne (0,0)$, and a nontrivial branch  
\begin{equation*}
   (-\varepsilon,\varepsilon)\ni t\mapsto \left(\eta(t), \xi(t), \rho(t) \right)\in X \times  \RR 
\end{equation*}
such that, $\rho(0)=R$, $\eta(0)=0$, $\xi(0)=0$,  $\eta'(0)=\be Y^{\star}(\,{\cdottone}/{R})$, $\xi'(0)=\ga Y^{\star}(\cdottone)$ and $F(\eta(t), \xi(t), \rho(t))=0$ for all $t\in (-\ve,\ve)$. In particular, for $|t|$ small enough, the pair $(D_{\eta(t)}^{\rho(t)},\Om_{\xi(t)})$ satisfies some overdetermination of type $(1,1)^\star$. Moreover, since $(\be,\ga)\ne 0$, for $|t|$ small enough, either $D_{\eta(t)}^{\rho(t)}$ or $\Om_{\xi(t)}$ is not a ball. Actually, one can show that neither are balls. Indeed, if this were the case, one would get a contradiction with either Theorem~\ref{(1,1) D=ball symmetry} (if $D_{\eta(t)}^{\rho(t)}$ were a ball and $\Om_{\xi(t)}$ were not) or \cite[Theorem 5.1]{Sak bessatsu} (if $\Om_{\xi(t)}$ were a ball and $D_{\eta(t)}^{\rho(t)}$ were not). Thus, the claim of Theorem~\ref{(1,1) bifurcation} readily follows in light of Lemma~\ref{star implies nonstar for asymmetry}. 
\end{proof}

\section{Counterexamples for overdetermination of type $(1,0)$}\label{sec:(a,0)}

In this section, we will give a proof of Theorem~\ref{(1,0) asymmetry} via the Cauchy--Kovalevskaya theorem.

Let $D_0:=\setbld{x\in\rn}{|x|<R}$ for some $R>0$. In what follows, we will construct a bounded domain $\Om\supset\ol D_0$ such that $(D_0,\Om)$ are not concentric balls but the solution $u$ to \eqref{eq:Dirichlet problem} in $(D_0,\Om)$ is radial in $D_0$ (but not with respect to the center of $D_0$). This will yield a counterexample to radial symmetry in the presence of overdetermination of type $(a,0)$ for all $a\in \NN\cup\{0\}$.

For small $\ve>0$ consider the following functions:
\begin{equation}\label{f_ve g_ve}
f_\ve(x):=\frac{|x-\ve e_1|^2}{2\sg_c}, \quad g_\ve(x):= \langle x-\ve e_1, \nu\rangle, \quad \text{for }x\in \pa D_0.    
\end{equation}
In \cite{walter}, W. Walter gave an alternative proof of the Cauchy--Kovalevskaya theorem using the Banach fixed-point theorem. As a result, he showed that the solution matching the Cauchy data on a non-characteristic surface is not only uniquely determined by the data (including the equation) of the problem but also continuously dependent on them. If we localize \cite[Theorem 2]{walter} and apply it to our setting, we get the existence of positive constants $R_1$, $R_2$ (with $R_1<R<R_2$), $\ve_0>0$, and of a unique continuous mapping 
\begin{equation}\label{u_cdottone}
    u_\cdottone: [0,\ve_0]\to C^\om\left(\ol B_{R_2}\setminus  B_{R_1}, \RR\right)
\end{equation}
such that, for all $\ve\in [0,\ve_0]$, the real-analytic function $u_\ve$ is the unique solution to the following problem:
\begin{equation}\label{Cauchy problem in a strip}
   \begin{cases}
       \De u_\ve=N, \quad \tin B_{R_2}\setminus \ol B_{R_1},\\
       u_\ve=f_\ve\quad \ton \pa D_0,\\
       \pa_\nu u_\ve =g_\ve \quad \pa D_0.
   \end{cases} 
\end{equation}
We remark that the theorem in \cite{walter} shows continuous dependency in the $C^0$-norm. Nevertheless, continuous dependency in the $C^1$-norm holds as well, as shown in the following lemma.
\begin{lemma}[Continuous dependency in the $C^1$-norm]\label{C^1 dependency}
Let $u_\cdottone$ be the mapping defined by \eqref{u_cdottone}--\eqref{Cauchy problem in a strip}. Then, the following mapping is continuous in the $C^0$-norm: 
\begin{equation*}
    \gr u_\cdottone: [0,\ve_0]\to C^\om\left(\ol B_{R_2}\setminus  B_{R_1}, \rn\right), 
\end{equation*}
\end{lemma}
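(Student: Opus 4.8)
The plan is to bootstrap the $C^1$-continuous dependence from the $C^0$-continuous dependence already provided by \eqref{u_cdottone}, using the fact that every $u_\ve$ solves the \emph{same} equation $\De u_\ve=N$ in $B_{R_2}\setminus\ol B_{R_1}$, so that any difference of two of them is harmonic there.

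Concretely, I would fix $\ve_*\in[0,\ve_0]$ and, for $\ve\in[0,\ve_0]$, set $w:=u_\ve-u_{\ve_*}$; then $\De w=0$ in $B_{R_2}\setminus\ol B_{R_1}$ and, by the continuity of $u_\cdottone$ in the $C^0$-norm, $\|w\|_{C^0(\ol B_{R_2}\setminus B_{R_1})}\to0$ as $\ve\to\ve_*$. The interior gradient estimate for harmonic functions (see, e.g., \cite[Theorem 2.10]{GT}) then gives, for every annulus $K$ compactly contained in $B_{R_2}\setminus\ol B_{R_1}$, a constant $C=C(N,K)$ with $\|\gr w\|_{C^0(K)}\le C\,\|w\|_{C^0(B_{R_2}\setminus\ol B_{R_1})}$. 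In particular $\gr u_\cdottone$ is $C^0$-continuous on every such $K$, which already covers a full neighborhood of $\pa D_0$ (no boundary regularity being needed there, since $\pa D_0=\pa B_R$ lies in the interior of the annulus) — and this is all that is actually used in section~\ref{sec:(a,0)}. To reach $\pa B_{R_1}$ and $\pa B_{R_2}$ as well, I would invoke the quantitative Cauchy--Kovalevskaya theorem of \cite{walter} once more: since the Cauchy data $f_\ve$, $g_\ve$ in \eqref{f_ve g_ve} depend polynomially on $\ve$ and hence range over a bounded family for $\ve\in[0,\ve_0]$, Walter's fixed-point construction produces the $u_\ve$ as real-analytic solutions on a common slightly larger annulus $B_{R_2+\delta}\setminus\ol B_{R_1-\delta}$, with $u_\cdottone$ still continuous there in the $C^0$-norm (equivalently, one simply re-runs the localization argument with perturbed radii). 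Then $\ol B_{R_2}\setminus B_{R_1}$ is compactly contained in this enlarged annulus, and the interior estimate above yields $\|\gr w\|_{C^0(\ol B_{R_2}\setminus B_{R_1})}\le (C/\delta)\,\|w\|_{C^0(\ol B_{R_2+\delta}\setminus B_{R_1-\delta})}\to0$.

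Letting $\ve\to\ve_*$ shows that $\ve\mapsto\gr u_\ve$ is continuous at $\ve_*$ in the $C^0$-norm, and since $\ve_*$ is arbitrary this proves the lemma. The only genuinely delicate point — the hard part — is securing the uniform extra room $\delta$ needed to push the interior harmonic estimate up to the two spherical components of $\pa(B_{R_2}\setminus\ol B_{R_1})$; this requires inspecting the convergence domain in \cite{walter} and checking that it is uniform over the compact family of Cauchy data $\{(f_\ve,g_\ve):\ve\in[0,\ve_0]\}$. Everything else is a routine application of interior elliptic estimates to the harmonic function $u_\ve-u_{\ve_*}$.
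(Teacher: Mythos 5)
Your route is genuinely different from the paper's. The paper does not pass through interior gradient estimates at all: it applies Walter's quantitative Cauchy--Kovalevskaya theorem \cite[Theorem 2]{walter} a second time, now to each $\pa_{x_i}u_\ve$ (which solves $\De(\pa_{x_i}u_\ve)=0$), after computing the Cauchy data for $\gr u_\ve$ on $\pa D_0$ explicitly from $(f_\ve,g_\ve)$ via the tangential-calculus identities of Lemma~\ref{lem: jump}: $\gr u_\ve=\gr_\tau f_\ve+g_\ve\nu$ and $\pa_\nu\gr u_\ve=N-\De_\tau f_\ve-\frac{N-1}{R}g_\ve+\gr_\tau g_\ve-D_\tau\nu\,\gr_\tau f_\ve$ on $\pa D_0$. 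Continuity of these expressions in $\ve$ then transfers to $\gr u_\cdottone$ by the same continuous-dependence statement already used for $u_\cdottone$, with no need to touch the annulus $\ol B_{R_2}\setminus B_{R_1}$. Your approach (difference is harmonic, then \cite[Theorem~2.10]{GT}) is more elementary and avoids the tangential-calculus bookkeeping, but two points need care. First, your remark that a compactly contained neighborhood of $\pa D_0$ is ``all that is actually used in section~\ref{sec:(a,0)}'' is wrong: the bound \eqref{star} is required on all of $\ol B_{R_2}\setminus D_0$, including $\pa B_{R_2}$, so the interior estimate alone does not suffice and the $\delta$-enlargement is not optional. Second, that enlargement is exactly the nontrivial step, and you flag it rather than carry it out; it amounts to re-running Walter's construction with radii $R_1-\delta<R_1$, $R_2+\delta>R_2$ and a possibly smaller $\ve_0$, which is legitimate since $R_1,R_2,\ve_0$ were chosen by the authors and admit slack, but it does modify the setup of \eqref{u_cdottone}--\eqref{Cauchy problem in a strip} rather than work within it. The paper's argument sidesteps both issues, at the cost of computing the second-order Cauchy data on $\pa D_0$.
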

\begin{proof}
The claim follows by applying once again the Cauchy--Kovalevskaya theorem, \cite[Theorem 2]{walter}, to the partial derivatives of $u_\ve$. To this end, it will be enough to show that, for $i=1,\dots, N$, the Cauchy data satisfied by $\pa_{x_i}u_\ve$ on $\pa D_0$ depend continuously on the parameter $\ve$ in the $C^0$-norm. For arbitrary $\ve\in [0,\ve_0]$, consider the unique solution $u_\ve$ to \eqref{Cauchy problem in a strip}.
Let us first study the Dirichlet data satisfied by $\gr u_\ve$ on $\pa D_0$. 
Item $(i)$ of Lemma~\ref{lem: jump} yields 
\begin{equation*}
{\gr u_\ve}= \gr_\tau u_\ve + \pa_\nu u_\ve\, \nu = \gr_\tau f_\ve + g_\ve\, \nu \quad \text{on }\pa D.
\end{equation*}
Now, by recalling the definitions of $f_\ve$ and $g_\ve$ in \eqref{f_ve g_ve}, the above implies that, for all $i=1,\dots, N$, the Dirichlet data on $\pa D_0$ of $\pa_{x_i}u_\ve$ depends continuously on $\ve$ in the $C^0$-norm. 

Let us now consider the Neumann data satisfied by $\gr u_\ve$ on $\pa D_0$. 
Combining $(ii)$ and $(iii)$ of Lemma~\ref{lem: jump} yields 
\begin{equation*}
\begin{aligned}
\pa_\nu \gr u_\ve = \gr^2 u_\ve \, \nu 
= N- \De_\tau u_\ve - (N-1)H \pa_\nu u_\ve + \gr_\tau \pa_\nu u_\ve - D_\tau \nu \gr_\tau u_\ve \\ = 
N-\De_\tau f_\ve - \frac{N-1}{R} g_\ve + \gr_\tau g_\ve - D_\tau \nu \gr_\tau f_\ve 
\quad  \text{on }\pa D_0.
\end{aligned}
\end{equation*}
As before, we find that, for all $i=1,\dots, N$, the Neumann data on $\pa D_0$ of $\pa_{x_i}u_\ve$ also depends continuously on $\ve$ in the $C^0$-norm. This concludes the proof.
\end{proof}
We are now in a position to prove Theorem~\ref{(1,0) asymmetry}.
\begin{proof}[Proof of Theorem~\ref{(1,0) asymmetry}]
First, notice that, by construction, 
\begin{equation*}
    u_0(x)=|x|^2/2- R^2/2+R^2/(2\sigma_c) \quad \text{for } x\in B_{R_2}\setminus \ol D_0. 
\end{equation*}
As a result, we have the following:
\begin{equation*}
\begin{aligned}
   \left\langle \gr u_0(x), \frac{x}{|x|}\right\rangle=|x|\ge R>0 \quad \text{for }x\in \ol B_{R_2}\setminus D_0,\\
    \max_{\pa D_0} u_0 = \frac{R^2}{2\sg_c} < \frac{R_2^2-R^2}{2}+\frac{R^2}{2\sg_c}=\min_{\pa B_{R_2}}u_0.
\end{aligned}
\end{equation*}
Thus, by Lemma~\ref{C^1 dependency}, we can find some $\ve\in (0,\ve_0)$ such that the following both hold true:
\begin{align}
\left\langle\gr u_\ve(x), \frac{x}{|x|}\right\rangle  >\frac{R}{2} & >0\quad \text{for }x\in \ol B_{R_2}\setminus D_0, \label{star} \\
 \displaystyle\max_{\pa D_0}u_\ve & < \min_{\pa B_{R_2}} u_\ve. \label{double star}
\end{align}
Take now some constant $\displaystyle \ga\in \Big( \max_{\pa D_0} u_\ve, \min_{\pa B_{R_2}} u_\ve \Big)$.

We claim that for all $\te\in \SS^{N-1}$ there exists a unique radius $r(\te)\in (R,R_2)$ such that $u_\ve\left(r(\te)\te\right)=\ga$.
In order to show that, fix an element $\te\in \SS^{N-1}$ and consider the function
\begin{equation}\label{a continuous function}
    (R,R_2)\ni r\mapsto u_\ve(r\te) \in \RR.
\end{equation}
This function is continuous by construction, and monotone because of \eqref{star}. Moreover,
\begin{equation*}
    u_\ve(R\te)\le \max_{\pa D_0} u_\ve <\ga < \min_{\pa B_{R_2}} u_\ve \le u_\ve (R_2 \te).
\end{equation*}
The claim now follows from the intermediate value theorem. 

Consider now the level set 
\begin{equation}\label{u_ve=ga}
 \setbld{x\in \ol B_{R_2}\setminus D_0 }{u_\ve(x)=\ga}.   
\end{equation}
Since, by \eqref{star}, the gradient of $u_\ve$ does not vanish in $\ol B_{R_2}\setminus D_0$ and thus, by the implicit function theorem, the level set in \eqref{u_ve=ga} can be locally written as the graph of an analytic function. In other words, the level set in \eqref{u_ve=ga} is an analytic hypersurface embedded in $\rn$. Thus, we can consider the bounded domain $\Om$ enclosed by it, that is,
\begin{equation*}
    \Om:= \setbld{r\te}{\te\in \SS^{N-1}, \quad 0\le r < r(\te)}.
\end{equation*}

The pair $(D_0,\Om)$ then yields the desired counterexample. Indeed, one can check that the function 
\begin{equation*}
    u(x):=\begin{cases}
        f_\ve(x)-\ga\quad \text{for }x\in D_0,\\
        u_\ve(x)-\ga\quad \text{for }x\in \Om\setminus D_0
    \end{cases}
\end{equation*}
solves \eqref{eq:Dirichlet problem} and is radial with respect to the point $\ve e_1\in D_0$. Then, in particular, any sphere centered at $\ve e_1$ and small enough to fit inside $D_0$ is an overdetermined level set for $u$. Nevertheless, $(D_0,\Om)$ are not concentric spheres. Indeed, if that were the case, by the unique solvability of \eqref{eq:Dirichlet problem}, the function $u$ would be radial with respect to the center of $D_0$ (the origin) and not $\ve e_1$.  
\end{proof}

\section*{Acknowledgements}
The first author is partially supported by JSPS KAKENHI Grant Number JP22K13935, JP21KK0044, and JP23H04459.

The second author is supported by the Australian Research Council (ARC) Discovery Early Career Researcher Award (DECRA) DE230100954 “Partial Differential Equations: geometric aspects and applications” and the 2023 J G Russell Award from the Australian Academy of Science, and is member of the Australian
Mathematical Society (AustMS) and the Gruppo Nazionale Analisi Matematica Probabilit\`a e Applicazioni (GNAMPA) of the Istituto Nazionale di Alta Matematica (INdAM).

Part of this work was conducted while both authors were visiting the University of Florence in June 2023. The authors wish to thank Paolo Salani for sharing his office with them. 
%
%

\begin{small}

\end{small}


\noindent
\textsc{Lorenzo Cavallina:}\\
\noindent
\textsc{
Mathematical Institute, Tohoku University, Aoba-ku, 
Sendai 980-8578, Japan}\\
\noindent
{\em Electronic mail address:}
cavallina.lorenzo.e6@tohoku.ac.jp

\bigskip

\noindent
\textsc{Giorgio Poggesi:}\\
\noindent
\textsc{
Discipline of Mathematical Sciences, The University of Adelaide, Adelaide SA 5005, Australia}
\\
\noindent
{\em Electronic mail address:}
giorgio.poggesi@adelaide.edu.au
\\
\noindent
\textsc{and}
\\
\noindent
\textsc{Department of Mathematics and Statistics, The University of Western Australia, 35 Stirling Highway, Crawley, Perth, WA 6009, Australia} \\
\noindent
{\em Electronic mail address:}
giorgio.poggesi@uwa.edu.au

\end{document}